\documentclass[11pt]{amsart}


\usepackage{amsthm}
\usepackage{amsmath}
\usepackage{amssymb}
\usepackage[margin=1.4in]{geometry}
\usepackage{enumerate}
\usepackage{hyperref}
\usepackage{mathrsfs}
\usepackage{color}
\usepackage{bm}

\usepackage{microtype}
\usepackage{graphicx}
\usepackage{pdfpages}
\usepackage{todonotes}
\usepackage{enumerate}


\title{Diffusion limit for a slow-fast standard map}
\author[Alex Blumenthal]{Alex Blumenthal$^\dagger$}
\thanks{$\dagger$ This material is based upon work supported by the National Science Foundation under Award No. DMS-1604805.}
\address{Alex Blumenthal\\
Department of Mathematics\\
University of Maryland\\
4417 Mathematics Bldg,  College Park,  MD 20742, USA}

\email{{\tt alexb123@math.umd.edu}}
\urladdr{\href{http://www.math.umd.edu/~alexb123}{http://www.math.umd.edu/~alexb123}}

\author[Jacopo De Simoi]{Jacopo De Simoi$^{\ddagger}$}
\address{Jacopo De Simoi\\
Department of Mathematics\\
University of Toronto\\
40 St George St. Toronto, ON, Canada M5S 2E4}
\email{{\tt jacopods@math.utoronto.ca}}
\urladdr{\href{http://www.math.utoronto.ca/jacopods}{http://www.math.utoronto.ca/jacopods}}
\thanks{$\ddagger$ J.D.S. is supported by the NSERC Discovery grant,
  reference number 502617-2017}

\author[Ke Zhang]{Ke Zhang$^*$}
\thanks{$*$ K.Z. is supported by the NSERC Discovery grant, reference number 436169-2013.}
\address{Ke Zhang\\
Department of Mathematics\\
University of Toronto\\
40 St George St. Toronto, ON, Canada M5S 2E4}
\email{{\tt kzhang@math.utoronto.ca}}
\urladdr{\href{http://www.math.utoronto.ca/kzhang/}{http://www.math.utoronto.ca/kzhang/}}

\date{\today}


\theoremstyle{theorem}
\newtheorem{thm}{Theorem}
\newtheorem{thmA}{Theorem}
\newtheorem{cor}[thm]{Corollary}

\newtheorem{lem}[thm]{Lemma}

\newtheorem{prop}[thm]{Proposition}

\theoremstyle{definition}

\newtheorem{defn}[thm]{Definition}
\newtheorem{rmk}[thm]{Remark}

\newtheorem*{nrmk}{Notational remark}


\newcommand{\E}{\mathbb{E}}

\newcommand{\N}{\mathbb{N}}
\renewcommand{\P}{\mathbb{P}}
\newcommand{\R}{\mathbb{R}}

\newcommand{\Z}{\mathbb{Z}}


\newcommand{\Uc}{\mathcal{U}}
\newcommand{\Bc}{\mathcal{B}}
\newcommand{\Fc}{\mathcal{F}}

\newcommand{\Lc}{\mathcal{L}}

\newcommand{\Sc}{\mathcal{S}}

\newcommand{\Nc}{\mathcal{N}}

\newcommand{\Ec}{\mathcal E}

\newcommand{\Jc}{\mathcal J}


\newcommand{\bE}{\mathbb{E}}

\newcommand{\cL}{\mathcal{L}}



\renewcommand{\a}{\alpha}
\renewcommand{\b}{\beta}

\newcommand{\e}{\epsilon}


\newcommand{\graph}{\operatorname{graph}}

\newcommand{\ifrac}[2]{#1/#2}

\newcommand{\T}{\mathbb T}
\newcommand{\Cc}{\mathcal C}
\newcommand{\Leb}{\operatorname{Leb}}

\renewcommand{\graph}{\operatorname{graph}}

\newcommand{\Pc}{\mathcal P}
\newcommand{\Ic}{\mathcal I}

\newcommand{\Bor}{\operatorname{Bor}}

\newcommand{\modone}{\, \, (\operatorname{mod} \, 1)}


\newcommand{\leb}{\Leb}

\newcommand{\bmat}[1]{\begin{bmatrix} #1 \end{bmatrix}}
\newcommand{\floor}[1]{\lfloor #1\rfloor}

\newcommand{\eg}{e.g.\ }
\newcommand{\ie}{i.e.\ }

\newcommand{\notej}[1]{\todo[inline]{Jacopo: #1}{}}
\newcommand{\textr}[1]{{\color{red}#1}}
\newcommand{\ignore}[1]{}

\begin{document}

\maketitle

\begin{abstract}
Consider the map
$(x, z) \mapsto (x + \epsilon^{-\alpha} \sin (2\pi x) +
\epsilon^{-(1+\alpha)}z, z + \epsilon \sin(2\pi x))$, which is conjugate
to the Chirikov standard map with a large parameter. The parameter
value $\alpha = 1$ is related to ``scattering by resonance''
phenomena. For suitable $\a$, we obtain a central limit theorem for
the slow variable $z$ for a (Lebesgue) random initial condition. The
result is proved by conjugating to the Chirikov standard map and
utilizing the formalism of standard pairs. Our techniques also yield
for the Chirikov standard map a related limit theorem and a
``finite-time'' decay of correlations result.
\end{abstract}

\section{Introduction and statement of results}

\subsection{The slow-fast standard map}

Throughout, $\a > 0$ is fixed. We consider the discrete-time slow fast
system $G_\e$ on the cylinder $ \T^1 \times \R$ defined as follows:
\[
G_\e(x,z) = (x + \e^{- \a} \sin(2 \pi x) + \e^{- (1 + \a)} z \modone,
z + \e \sin (2 \pi x))
\]
This map is a composition of two maps $G_\e = S_\e \circ T_\e$, where
the `tilt' map $T_\e$ and the `shear' map $S_\e$ are defined by
\[
T_\e (x,z) = (x, z + \e \sin (2\pi x) ) \, , \quad S_\e(x, z) = (x + \e^{- (1 + \a)} z \modone , z) \, .
\]
This combination of tilting and shearing serves as a good model on
many slow-fast physical systems: see the discussion in Section \ref{subsec:discussionModel} below.

%


\subsection{Statement of results}


Writing $(x^\e_n , z^\e_n) = G_\e^n(x_0, z_0)$ for $(x_0, z_0) \in \T^1 \times \R$ fixed, observe that
\begin{align*}
z^\e_N = z_0 + \e \sum_{n = 0}^{N-1} \sin (2\pi x^\e_n) \, .
\end{align*}

The $x$ coordinate is clearly `fast' relative to the $z$, and so one
anticipates $z^\e_N$ to have a \emph{diffusion limit} in the regime
$N = N(\e) = \lfloor \e^{-2} \rfloor$, when we consider it as a random
variable with respect to the initial conditions $(x_{0}, z_{0})$.  This
does not follow from conventional averaging arguments, however, since,
as will be explained in detail in the following sections, the
fast dynamics has critical behavior at
$x \approx \frac14, \frac34$ (the zeros of
$x \mapsto 1 + 2 \pi \e^{- \a} \cos(2 \pi x)$).

Our approach is to conjugate the above system to the Standard Map:
\begin{align}\label{e_Standard Map definition}
F_L (x, y) = (x + y + L \sin (2\pi x), y + L \sin (2\pi x))
\end{align}
by the change of variables $z = \e^{ 1 + \a} y$; here, the parameter
$L$ is defined by $L = \e^{- \a}$. Notice that the $x$ coordinate is
unchanged, and so we have that $x^\e_n$ is the $x$-{}coordinate of
$F^n_L(x_0, y_0)$ where $y_0 := \e^{- (1 + \a)} z_0$. Thus, the diffusion
limit for $z_{N(\e)}^\e$ above is equivalent to a central limit theorem
for the sequence
\begin{align}\label{stdMapForm}
\bigg( \frac{1}{\sqrt N} \sum_{i = 0}^{N-1} \psi \circ F^i_L \bigg)_{L \to \infty} \, .
\end{align}
Here, $\psi(x, y) = \sin (2\pi x)$, and in the above sequence, we write
$N = N(L) = \lfloor L^{\beta} \rfloor$, $\beta := 2/ \a$; this scaling
is equivalent to the original diffusion limit for $z_{N(\e)}^\e$.



\begin{thmA}\label{thm:diffusion}
Suppose $\alpha > 8$ and let $[a, b] \subset \R$ be a non-trivial
interval. Let $X, Z$ be uniformly distributed random variables on $\T$
and $[a, b]$ respectively. Define
$Z_\epsilon^n = \pi_z G_\epsilon^n(X, Z)$, then for
$N(\epsilon) = \floor{\epsilon^{-2}}$, the random variable
\begin{align*}
Z^{N(\epsilon)}_\epsilon - Z
\end{align*}
converges in distribution to the centered Gaussian $\Nc(0, \frac12)$,
as $\epsilon \to 0$.
\end{thmA}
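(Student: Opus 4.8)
The plan is to carry out, in the conjugated coordinates, a central limit theorem for $S_N:=\sum_{i=0}^{N-1}\psi\circ F_L^i$ with $\psi(x,y)=\sin(2\pi x)$, $L=\e^{-\a}$ and $N=N(L)=\floor{L^{\b}}$, $\b=2/\a$, as $L\to\infty$, and then unwind the conjugacy. For the \textbf{reduction}: with $\Phi(x,y)=(x,\e^{1+\a}y)$ one checks $G_\e=\Phi\circ F_L\circ\Phi^{-1}$ for $L=\e^{-\a}$, so that $Z^{N(\e)}_\e-Z=\e\,S_{N(\e)}$ evaluated at $(X,y_0)$ with $y_0=\e^{-(1+\a)}Z\modone$ (using $Z^N_\e-Z=\e\sum_{n<N}\sin(2\pi x^\e_n)$ and that $x^\e_n$ is the $x$-coordinate of $F_L^n(X,y_0)$). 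Since $N(\e)=\floor{\e^{-2}}$ gives $\e\sqrt{N(\e)}\to1$, by Slutsky it suffices to show $N^{-1/2}S_N\Rightarrow\Nc(0,\tfrac12)$ under the law of $(X,y_0)$. Conditionally on $Z=z$ this law is normalized Lebesgue on the horizontal circle $\{y=\e^{-(1+\a)}z\modone\}$, i.e. a standard pair (with $\int\psi\,d\mu=\int_{\T}\sin(2\pi x)\,dx=0$), so the initial law is an average of standard pairs. It therefore suffices to establish $N^{-1/2}S_N\Rightarrow\Nc(0,\tfrac12)$, with the same limit, uniformly over standard pairs as initial data; since $\Leb_{\T^2}$ is itself an average of such horizontal standard pairs, I will phrase the estimates below with respect to $\Leb_{\T^2}$, the general case being identical (one may also compare a single standard pair to $\Leb$ by discarding the first $o(\sqrt N)$ iterates, after which its pushforward is close to $\Leb$).

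\textbf{Variance.} Using $F_L$-invariance of $\Leb$,
$$\frac1N\E_{\Leb}\big[S_N^2\big]=\int_{\T^2}\psi^2\,d\Leb+\frac2N\sum_{k=1}^{N-1}(N-k)\int_{\T^2}\psi\cdot(\psi\circ F_L^k)\,d\Leb,$$
and the first term equals $\int_\T\sin^2(2\pi x)\,dx=\tfrac12$. The correlation sum is bounded using the finite-time decay-of-correlations estimate established earlier: each correlation is $O(\varepsilon_L)$ for a rate $\varepsilon_L\to0$ of Bessel type (and $\int\psi\cdot(\psi\circ F_L)\,d\Leb=0$ exactly, by integrating out $y$), so the sum is at most $O(N^2\varepsilon_L)$, which is $o(1)$ in the regime $\a>8$ (equivalently $\b<1/4$). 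Hence $\sigma_N^2:=N^{-1}\E_{\Leb}[S_N^2]\to\tfrac12$; the limiting Green--Kubo series collapses to $\int\psi^2\,d\Leb$, which is why the limiting variance is exactly $\tfrac12$.

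\textbf{Higher moments and conclusion.} Finish by the method of moments: expand $\E_{\Leb}[S_N^p]=\sum_{0\le i_1,\dots,i_p<N}\int_{\T^2}\prod_l\psi\circ F_L^{i_l}\,d\Leb$, sort the indices, and group them into clusters of consecutive indices lying within an intermediate distance $\Delta=L^{\delta}$, $0<\delta<\b$. A multiple-correlation version of the finite-time decay of correlations makes the integral factor over clusters up to a controlled error; since $\int\psi\,d\Leb=0$, any configuration containing a singleton cluster is negligible, and a cluster of size $\ge3$ loses a factor $\Delta^2/N\to0$ against a pair, so the leading $N^{p/2}$-behavior comes only from perfect pairings into $p/2$ clusters of size two, each contributing (after summing the within-pair gap, dominated by the $k=0$ term of Step~2) a factor $\sigma_N^2\to\tfrac12$. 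Summing over the $(p-1)!!$ pairings gives $N^{-p/2}\E_{\Leb}[S_N^p]\to(p-1)!!\,2^{-p/2}$ for even $p$ and $\to0$ for odd $p$, which are the moments of $\Nc(0,\tfrac12)$; as that law is determined by its moments, $N^{-1/2}S_N\Rightarrow\Nc(0,\tfrac12)$, and with the reduction this yields the theorem.

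\textbf{Main obstacle.} The real work lies not in this combinatorics but in the dynamical input feeding Steps~2--3: controlling the pushforwards $F_{L*}^n\mu$ of standard pairs and the resulting finite-time (multiple) decay of correlations for all $n\le N(L)=L^\b$, despite the loss of hyperbolicity and the folding of standard curves in the critical strips $\{\,|\cos 2\pi x|\lesssim L^{-1/2}\,\}$ near $x=\tfrac14,\tfrac34$. The "bad" mass generated there is of order $L^{-1/2}$ per step, and the variance/moment bookkeeping requires quantities of the form $N^2L^{-1/2}=L^{2\b-1/2}$ to be negligible — precisely what the hypothesis $\a>8$, i.e.\ $\b<1/4$, provides. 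Granting the standard-pair formalism and the finite-time decay estimates of the preceding sections, the reduction, the variance computation, and the moment combinatorics are then essentially bookkeeping.
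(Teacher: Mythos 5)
Your reduction via the conjugacy $z=\e^{1+\a}y$ and the variance computation are correct and match the paper; in fact the observation that $\int\psi\cdot\psi\circ F_L\,d\Leb=0$ exactly (integrate out $y$) is in the spirit of the paper's Lemma~\ref{lem:oneStepDoC}, and the bound $\tfrac1N\E S_N^2=\tfrac12+O(N^2L^{-3/4}+NL^{-1/2})\to\tfrac12$ works under $\b<1/4$. However, for the CLT itself you take a \emph{genuinely different route} — method of moments with a cluster expansion — whereas the paper builds a martingale-difference array and invokes McLeish's theorem (which needs only second-moment information, namely conditions (M1)--(M3)). This difference is not cosmetic: it is where your argument has a real gap.

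Two concrete problems. First, your cluster expansion requires a multi-point decay-of-correlations estimate, which the paper does not prove and which is genuinely harder in this setting: the correlation bound (Theorem~\ref{thm:decayLebesgue} / Corollary~\ref{cor:LebDoC}) applies only to pairs of $x$\emph{-dependent} observables, but a cluster product such as $\psi\cdot(\psi\circ F)$ already depends nontrivially on $y$, so one cannot simply iterate the 2-point bound to factor across gaps. Second, and more fundamentally, the available rate is too weak for the method of moments in the regime $\a>8$. The configurations in $\E[S_N^p]$ in which \emph{all} gaps exceed $\Delta$ (so each cluster is a singleton and the factored product vanishes) are $\sim N^p$ in number and each contributes an error at least of order $L^{-1/2}$ (the irreducible floor in the decay rate); the total error is thus $\gtrsim N^pL^{-1/2}$, and requiring this to be $o(N^{p/2})$ forces $N^{p/2}L^{-1/2}=L^{p\b/2-1/2}\to0$, i.e.\ $\a>2p$. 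So your scheme would only control moments up to roughly $p<\a/2$; for $\a$ just above $8$ this fails already at $p=5$, and no single $\a$ handles all moments. This is exactly the obstruction that the paper's martingale/McLeish route is designed to avoid: it closes the argument using only second moments, which is why $\a>8$ (equivalently $\b<1/4$, so that $N^2L^{-1/2}\to0$) suffices. To repair your argument within the paper's framework you would either need to replace the moment method by a martingale CLT, or prove a multi-point correlation bound decaying faster than $L^{-1/2}$ uniformly over cluster observables — neither of which is ``bookkeeping.''

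A smaller point: your reduction says ``the general case [of a standard pair initial law] is identical'' to $\Leb_{\T^2}$, but the mixing distribution of $\e^{-(1+\a)}Z\bmod 1$ for $Z$ uniform on $[a,b]$ is only approximately uniform; the paper handles this carefully in Section~5 by passing to the truncated variable $Z_*(\e)$ and using the period-$\e^{1+\a}$ translation invariance of $Z^N_\e-Z$ in $z$. You should spell this out rather than wave at it.
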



Theorem~\ref{thm:diffusion} will be deduced from the following analogous
result for Standard Maps. In the following results, we regard $F_L$ as a
diffeomorphism of $\T^2 \cong \R^2 / \Z^2$.
Let $X, Y$ be independent random variables distributed uniformly on
$\T^1$.
\begin{thmA}\label{thm:CLT}
  Let $\phi : \T^1 \to \R$ be a $C^1$ observable, regarded as an
  $x$-dependent observable on $\T^2$. Assume $\int \phi \, dx = 0$ and
  that $\phi$ is not identically $0$. Let $N : \R_{> 0} \to \N$ be an
  increasing function and assume
\begin{equation}
  N(L) \cdot L^{- \frac{1}{4}} \to 0 \quad \quad \text{ as } L \to \infty \, .
\end{equation}
Then,
\[
  \frac{1}{\sqrt{N(L)}} \sum_{i = 0}^{N(L)-1} \phi \circ F^i_L(X, Y)
\]
converges in distribution to the centered Gaussian
$\Nc(0, \sigma^2)$ with variance $\sigma^2 = \int \phi^2 dx > 0$.
\end{thmA}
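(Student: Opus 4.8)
The plan is to prove this via the method of standard pairs, exploiting the fact that for large $L$ the standard map $F_L$ is strongly expanding in the unstable direction on the bulk of phase space. A \emph{standard pair} will be (roughly) a curve $\gamma$ in $\T^2$ that is a graph $y = G(x)$ over an $x$-interval, with slope close to the expanding direction and with curvature/derivative bounds that are invariant under $F_L$ away from the critical strips near $x \approx \frac14, \frac34$, equipped with a smooth density $\rho$ on it; the key structural fact (to be set up before this theorem, or imported from the standard-pair machinery) is that $F_L$ maps a standard pair to a finite or countable union of standard pairs, and that the initial product measure $dx\,dy$ on $\T^2$ decomposes into standard pairs (indeed, horizontal lines $y = \text{const}$ are already admissible). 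The first step is therefore to write the distribution of $(X,Y)$ as an average over such a family of standard pairs and reduce the CLT to a statement about a single standard pair, uniformly over the family.

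Next I would establish the two analytic inputs that drive the Gaussian limit. The first is an \emph{equidistribution / decay-of-correlations} estimate: for a standard pair $\ell = (\gamma, \rho)$ and an $x$-dependent observable $\phi$ with $\int \phi\,dx = 0$, one has $|\bE_\ell(\phi \circ F_L^n)| \le C \theta^n + (\text{error depending on } L)$ for some $\theta < 1$, with constants uniform over the standard-pair family; this is exactly the ``finite-time decay of correlations'' alluded to in the abstract, and it lets one control the first moment and, after pairing, the covariances. The second is a variance computation: because $\phi$ depends only on $x$ and $F_L$ acts on the $x$-coordinate in a way that becomes more and more ``mixing'' as $L\to\infty$, the correlations $\bE(\phi\circ F_L^i \cdot \phi\circ F_L^j)$ for $i\neq j$ are negligible compared to $\int \phi^2\,dx$, so that $\operatorname{Var}\big(\sum_{i<N}\phi\circ F_L^i\big) = N \int\phi^2\,dx \,(1+o(1))$. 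Here the hypothesis $N(L) L^{-1/4}\to 0$ is what guarantees that the accumulated $L$-dependent errors (which I expect to be of size $N/L^{c}$ for some explicit $c$, plausibly $c = 1/4$ arising from the measure of the critical strips of width $\sim L^{-1}$ times the number $\sim L$ of preimages, or from a large-deviation bound on the time spent near criticality) remain $o(\sqrt N)$ — or rather $o(N)$ in the variance, $o(1)$ in the relevant normalized quantities.

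With these in hand, the CLT itself I would prove by the method of moments (or equivalently a martingale/Bernstein-block argument): show that for each fixed $k$,
\[
\frac{1}{N^{k/2}}\bE_\ell\Big( \big(\textstyle\sum_{i=0}^{N-1}\phi\circ F_L^i\big)^k \Big) \longrightarrow \begin{cases} (k-1)!!\,\sigma^{k} & k \text{ even},\\ 0 & k \text{ odd},\end{cases}
\]
uniformly over standard pairs $\ell$, where $\sigma^2 = \int\phi^2\,dx$. Expanding the $k$-th power gives a sum over $k$-tuples of times $0\le i_1\le\dots\le i_k<N$; using the decay-of-correlations estimate iteratively (conditioning on the sigma-algebra generated by the coarsest time and peeling off well-separated indices) one shows that only the ``pair-partition'' terms — where the $k$ times split into $k/2$ tightly-clustered pairs, each contributing $\int\phi^2\,dx$ — survive in the limit, which reproduces the Gaussian moments; tuples with an unpaired index or with three-or-more clustered indices are $o(N^{k/2})$. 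Averaging over the standard-pair family and invoking the reduction from the first paragraph yields convergence of moments for $(X,Y)\sim dx\,dy$, and since the Gaussian is determined by its moments, convergence in distribution follows. Finally $\sigma^2 = \int\phi^2\,dx > 0$ since $\phi$ is $C^1$, $x$-dependent, not identically zero, and has zero mean.

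The main obstacle I anticipate is controlling the cumulative effect of the critical strips near $x \approx \frac14, \frac34$: there $F_L$ fails to be uniformly expanding, a standard pair can be cut and must be regrown, and trajectories can linger, so the clean exponential decay of correlations degrades. Quantifying this degradation — showing that the ``bad'' contributions are at worst polynomially small in $L$ per unit time, with the precise exponent matching the $L^{-1/4}$ threshold in the hypothesis — and doing so uniformly over the standard-pair family and for all moments $k$ simultaneously, is the technical heart of the argument; everything else is the now-standard standard-pairs bookkeeping.
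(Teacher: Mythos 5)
Your overall framework — decompose Lebesgue into standard pairs (horizontal lines), prove a one-step equidistribution estimate in the strongly-expanding region, bound the contribution of the critical strips at $x\approx 1/4, 3/4$, and then conclude a CLT — matches the paper's strategy in spirit, but two of your central assumptions diverge substantively from what the paper actually establishes and needs.

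First, the key analytic input is \emph{not} an exponential decay bound of the form $|\E_\ell(\phi\circ F_L^n)|\le C\theta^n + \text{error}(L)$ with $\theta<1$. For fixed large $L$ the standard map has elliptic islands and no exponential decay of correlations for Lebesgue measure; only a finite-time statement is available. The paper's Proposition~\ref{prop:docStdPair} proves $\int\phi\circ F^n\,d(\gamma,\rho) = O(\|\phi\|_{C^0}\cdot((n-1)L^{-3/4}+L^{-1/2}))$ for a fully-crossing standard pair: a one-step mixing to order $L^{-1/2}$ plus an error that \emph{accumulates} linearly in $n$ (at rate $L^{-3/4}$), coming from the mass lost to uncontrolled pieces each time the standard pair is pushed forward and cut. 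This accumulating error, not an exponential rate, is precisely why the hypothesis $N(L)L^{-1/4}\to 0$ is needed. Your plan would have to be rebuilt around this one-step-plus-accumulating-error structure rather than around a $\theta^n$ bound. Second, the paper does not use the method of moments. It constructs a filtration $\Fc_i = F_L^{-i}\Uc_i$ of $\sigma$-algebras generated by pulled-back partitions into fully-crossing u-curves, writes a martingale difference approximation $\tilde S_N = \sum_i U_i$ with $U_i \approx \phi\circ F^{i-1}$, shows $\frac{1}{\sqrt N}|S_N - \tilde S_N|\to 0$ in probability, and then applies McLeish's martingale CLT for arrays, verifying its conditions (M1)--(M3) using the equidistribution estimate and the correlation bound of Theorem~\ref{thm:decayLebesgue}. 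A method-of-moments argument might still be made to work with the correct (non-exponential) one-step estimate, since it would amount to controlling $k$-point correlations by repeated conditioning on the standard-pair filtration; but that would require a genuinely different bookkeeping than the martingale route the paper takes, and you would need to check that the $k$-point errors still add up to $o(N^{k/2})$ under $N\ll L^{1/4}$ for every fixed $k$. As written, then, the proposal has the right geometric picture but is missing the actual shape of the decay estimate and proposes a different (heavier) probabilistic endgame than the one in the paper.
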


\bigskip

As a consequence of our techniques we obtain the following result on
decay of correlations, which we report here as
a potentially useful result in its own right.
\begin{thmA}\label{thm:decayLebesgue}
There exists a constant $C > 0$ for which the following holds for all
$L > 0$ sufficiently large.  Let $\phi, \psi : \T^1 \to \R$ be $C^1$
observables, each regarded as $x$-dependent observables on
$\T^2$. Then, for all $n \geq 1$:
\begin{align*}
\bigg| \int \psi \cdot \phi \circ F^n_L - \int \phi \int \psi \bigg| \leq C \| \phi \|_{C^1} \| \psi \|_{C^1} \bigg( (n-1) L^{- 3/4} + L^{- 1/2} \bigg).
\end{align*}
\end{thmA}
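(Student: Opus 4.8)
The core idea is that $\psi$ is an $x$-dependent observable, so $\int \psi \cdot \phi \circ F_L^n$ can be computed by disintegrating Lebesgue measure on $\T^2$ into the family of horizontal segments $\{y = \text{const}\}$ — or, more usefully, into a collection of \emph{standard pairs}, the key technical device the paper has set up. The plan is: (i) write Lebesgue measure as an average over standard pairs $\gamma = (\mathfrak{r}_\gamma, \rho_\gamma)$ on curves that are (pieces of) horizontal lines $y = c$, on which $\psi$ varies with controlled $C^1$ norm; (ii) push forward $n$ times under $F_L$, invoking the standard-pair growth/regularity machinery to express $(F_L^n)_* \gamma$ as an average $\int (F_L^n)_* \gamma \, d\nu_\gamma(\gamma')$ over a new family of standard pairs $\gamma'$, each supported on a curve of length comparable to $1$ and uniformly bounded curvature/distortion; (iii) for each such $\gamma'$, since $\phi$ is $x$-dependent, estimate $\int \phi \circ (\text{graph of }\gamma') - \int \phi \, dx$ by an equidistribution statement: the $x$-projection of a long standard-pair curve is close to uniform, with error governed by how much the curve winds in $x$, which is where the $L^{-3/4}$ and $L^{-1/2}$ enter.

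More concretely, I would first reduce to the case $\int \phi \, dx = \int \psi \, dx = 0$ by subtracting means (the bilinear error estimate is unaffected up to constants). Then I would fix $y_0 \in \T^1$ and analyze the single standard pair supported on the horizontal segment $x \mapsto (x, y_0)$ with density proportional to $1$; call its image after one step $F_L(x, y_0) = (x + y_0 + L\sin 2\pi x, \; y_0 + L\sin 2\pi x)$, a curve whose slope in $(x,y)$ coordinates is $1 + 2\pi L \cos 2\pi x$ — uniformly expanding in $x$ except near $x \approx \tfrac14, \tfrac34$. Decomposing along the preimages of a fixed partition and iterating, one obtains after $n$ steps a family of standard curves; the paper's standard-pair estimates should give that, apart from an exceptional set of $\nu$-measure $O(L^{-1/2})$ (curves that are "too short"), each standard curve $\gamma'$ has $x$-length $\geq$ const and $C^2$-controlled graph, so $\big|\int_{\gamma'} \phi\, d\rho_{\gamma'} - \int \phi\, dx\big| \lesssim \|\phi\|_{C^1} \cdot (\text{reciprocal of the } x\text{-expansion accumulated}) $. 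Summing the contributions and tracking that each of the $n-1$ intermediate steps can contribute an $O(L^{-3/4})$ defect in equidistribution (from the boundary pieces near the critical points, whose total measure is $O(L^{-3/4})$ per step) yields the bound $C\|\phi\|_{C^1}\|\psi\|_{C^1}\big((n-1)L^{-3/4} + L^{-1/2}\big)$.

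The main obstacle is step (ii)–(iii): controlling the proliferation of "short" standard curves created near the critical points $x \approx \tfrac14, \tfrac34$ of the shear, where $1 + 2\pi L\cos 2\pi x$ vanishes and the standard-pair length can collapse. One must show that at each step the \emph{total} measure carried by curves that fail the length threshold is $O(L^{-3/4})$ (hence the linear-in-$n$ accumulation), while the genuinely long curves equidistribute in $x$ fast enough that their individual defects are summably small — this is exactly the quantitative content that the standard-pair formalism developed earlier in the paper is designed to provide, and the proof amounts to assembling those estimates with careful bookkeeping of the two error scales. A secondary technical point is that $\psi$ being $x$-dependent (not constant along the initial curves) must be absorbed into the standard-pair densities with an $O(\|\psi\|_{C^1})$ cost, which is routine given the regularity class of admissible densities.
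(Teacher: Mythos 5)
Your plan is essentially the paper's proof of Corollary~\ref{cor:LebDoC}: disintegrate $\Leb$ over horizontal lines $\gamma^y$, absorb $\psi$ into the standard-pair density (the paper does this via the shift-and-normalize $\hat\psi = (\psi+c)/(\int\psi+c)$ with $c = 2\|\psi\|_{C^1}$ rather than by subtracting the mean, but the effect is the same), push forward via the $\Lc/\Ic/\Jc/\Ec$ machinery to obtain the $(n-1)L^{-3/4}$ remainder (Corollary~\ref{lem:iterateStdPair}), and finish with a one-step equidistribution estimate (Lemma~\ref{lem:oneStepDoC}). One caution on your phrasing: the $O(L^{-1/2})$ does \emph{not} come from each long image curve being individually close to uniform in $x$ --- a fully-crossing image curve whose preimage sits near the critical points can carry a density with $O(1)$ distortion --- rather it comes from a mass-weighted quadrature over the whole family of image curves (the per-curve error scales like $L^{-1}|x_{\check\gamma}-\tfrac14|^{-2}$ while the mass scales like $|I_{F^{-1}\check\gamma}|$, and the weighted sum telescopes to $O(L^{-1/2})$), which is the substantive computation in Lemma~\ref{lem:oneStepDoC}; your later remark that the ``individual defects are summably small'' indicates you see this, but it is the step to spell out carefully.
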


\subsection{Discussion of the model and previous work} \label{subsec:discussionModel}

Our study of the system $G_\epsilon$ is primarily motivated by the following model.

\subsubsection*{Scattering by resonance}
We only give a heuristic picture here and refer to \cite{Nei1975a},
\cite{Nei1990}, \cite{Nei1996}, \cite{Dol2012} for details. To use a
specific example (see \cite{Dol2012}), consider the slow-fast system
\[
\dot{\phi} = f(\phi, I, \theta, \epsilon), \quad \dot{I} = g(\phi, I, \theta, \epsilon), \quad
\dot{\theta} = \epsilon^{-2} \omega(\phi, I, \epsilon), \quad \phi, \theta \in \T^1, \,  I \in \R.
\]
It is assumed that the averaged system
\[
\dot{\phi} = \int_0^1 f(\phi, I, \theta, 0) d\theta = p(I), \quad \dot{I} = \int_0^1 g(\phi, I, \theta, 0) d\theta = 0
\]
is completely integrable.  However, the averaging is not justified near
the resonant surface $\{\omega(\phi, I, 0) = 0\}$, since the fast
variable $\theta$ is no longer fast.

As the orbit in $(I, \phi)$ passes through the resonances, two
different phenomena may happen:
\begin{itemize}
	\item \emph{Strong resonance}, where there is a probability of
	$O(\epsilon)$ for the orbit to be \emph{captured} by the resonance,
	and stay captured for a random time before it is \emph{repelled}. See
	\cite{Dol2012} for a full analysis of this picture and the related
	limit theorems.

	\item \emph{Weak resonance}, where the orbit passes through the
	resonance without being captured.  After the passing the variable $I$
	changes by order $\e$, with average flux $0$. This is called
	\emph{scattering by resonance}.
\end{itemize}

As the orbit crosses a weak resonance, the change to the variables
$(\theta, I)$ can be approximated by a map of the type $T_\e$; while the
``free flight'' between two crossings of the resonance is approximated
by the map $S_\e(s, z) = (x + \e^{-2}z, z)$. As a result, successive
passages through weak resonances can be modeled by sequential
applications of maps of type $G_\e$ (with $\alpha = 1$).

\smallskip

Theorem \ref{thm:diffusion} of this paper does not apply to the $\alpha = 1$ regime
described above: indeed, to take $\alpha$ smaller in Theorem \ref{thm:diffusion} would necessitate
controlling the dynamics of the Standard map $F_L$, in the sense of Theorem \ref{thm:decayLebesgue},
 for timescales far longer than $N \sim L^{1/4}$. 


\subsubsection*{The Standard Map}

The Standard Map is a one-parameter family of area-preserving analytic
diffeomorphisms of $\T^{2}$.  It has been the subject of extensive
numerical and analytical study, starting from the pioneering work of
V. Chirikov and (independently) J. B. Taylor.  From the physical point
of view it describes the dynamics of a mechanical system known as the
``kicked rotor'', but it can be found in a large number of different other
models.  For example: it describes ground states of the
Frenkel--Kontorova Model (see~\cite{MR0001169,Aubry}); it models
dynamics of particles in accelerators (see \cite{MR536429,MR602110}) and
dynamics of balls bouncing on periodically oscillating platform (see \eg
\cite{MR0364764,MR2533973}); and can be regarded as a
toy model for stretching and folding mechanisms in fluid mechanics
(see \eg \cite{crisanti1991lagrangian}).

From the mathematical point of view it has been studied as a natural
example of dynamical system exhibiting \emph{mixed behavior}: it is
conjectured that the phase space of the standard map has positive
Lebesgue measure sets where the dynamics is hyperbolic and enjoys strong
statistical properties (``stochastic sea'') and positive Lebesgue
measure sets where the dynamics is regular (elliptic islands)
\cite{duarte1994plenty}.  In this respect, points belonging to the
hyperbolic component of the phase space should undergo some sort of
diffusion.  However, this fact has notoriously eluded rigorous proof for
many years and is widely believed to be astonishingly difficult to
prove. The strongest positive partial results are those of Gorodetski
\cite{gorodetski2012stochastic}, who proved that the hyperbolic set for
the standard map has Hausdorff dimension 2 for a ``large'' set of
sufficiently large $L$, and Berger and Turaev \cite{berger2017herman},
who proved that the standard map is $C^r$ ($r \geq 2$) close to a
volume-preserving map with positive metric entropy.

A natural problem of intermediate difficulty, pursued in the present
article, is to
consider a scaling limit in which the natural parameter of the Standard
Family is increased together with the number of iterations.  A first
result about statistical properties of the Standard Family in this
scenario can be found in \cite{Alex}, in which it is shown that
compositions of standard maps with increasing parameter exhibit both
asymptotic decay of correlations and a Central Limit Theorem with
respect to Holder-continuous observables. A correlation estimate
analogous to that in Theorem \ref{thm:decayLebesgue} is also exhibited.
While both the present article and \cite{Alex} share some features in
common (e.g., a reliance on correlations estimates for standard pairs),
the two implementations are distinct.  A key difference is that the
correlation estimate in Theorem \ref{thm:decayLebesgue} is much stronger
than the one appearing in \cite{Alex}, but at the same time takes
advantage of the simplifying assumption of working only for
$x$-dependent observables, whereas the results of \cite{Alex} apply to
all Holder-continuous observables. This difference also means that the
techniques used in the present manuscript differ significantly from
those in \cite{Alex}.

\subsubsection*{Background on the proof: Standard pairs}
Standard pairs are a modern tool which can be used to study statistical
properties of systems with some hyperbolicity.  They have been
introduced by Dolgopyat in a variety of settings (see for example
\cite{Dolgopyat2004}, \cite{Dolgopyat2004a}, \cite{Dol2012})
and have proved to be of invaluable help.  In a nutshell,
standard pairs are probability measures on the phase space which enjoy
particularly good dynamical properties (see
Lemmata~\ref{lem:cut-standard}--\ref{lem:cut-full}).  The main feature
of such measures is that they allow to introduce a sensible notion of
\emph{conditioning} in the deterministic setting.  In probability,
conditioning is one of the most basic and useful techniques, and one
would like to employ this tool also in our situation.  Clearly, in
deterministic settings, some care must be taken, as if one were to
condition on the configuration of the system at any given time, the
whole probabilistic picture would collapse (as no randomness would be
present anymore).  Standard pairs provide a very efficient solution to
this fundamental problem.


\subsubsection*{Plan for the paper}

The plan for the paper is as follows. In Section 2 we give some
preliminaries, including the definition of standard pair and various
related notions used in this paper.  In Section 3 we consider the
dynamics of standard pairs, prove results on correlation decay for
standard pairs, and use these to prove Theorem \ref{thm:decayLebesgue}. In
Section 4 we prove the Central Limit Theorem as stated in Theorem
\ref{thm:CLT}. In Section 5 we deduce Theorem \ref{thm:diffusion} from
Theorem~\ref{thm:CLT}.


  \subsubsection*{Notation and conventions}
\begin{itemize}
	\item We parametrize the circle $\T^1$ by the half-open interval
	$[0,1)$. Additive formulas in $\T^1$ are always considered$\modone$,
	i.e., under the natural projection
	$\R \to \T^1 = \R / \Z \cong [0,1)$. We parametrize $\T^2$ by
	$[0,1)^2$.
	\item We call a continuous observable $\phi : \T^2 \to \R$ {\bf x-dependent} if it can be represented as $\phi(x, y) = \hat \phi(x)$ for some $\hat \phi : \T^1 \to \R$. In this manuscript we will often use the same notation
	$\phi$ for both the observable on $\T^1$ and the corresponding
	$x$-dependent observable on $\T^2$.
	\item For a $C^1$ function $g$ defined on an open interval in $\R$ or $\T^1$, we write $\dot g$ or $\frac{d}{dx} g$
	for the derivative of $g$.  We write $\| g \|_{C^0}$ for the uniform norm of $g$ and $\| g \|_{C^1} = \max\{ \| g \|_{C^0} , \| \dot g \|_{C^0} \} $.
	\item $\Leb$ refers to normalized Lebesgue measure on $\T^2 \cong [0,1)^2$.
	\item Let $G = G(L)$ denote any quantity depending on the parameter $L$. We say that another quantity $H = H(L)$ is in the class $O(G)$, written $H = O(G)$, if $\limsup_{L \to \infty} \frac{|H(L)|}{|G(L)|} < \infty$. We say $H$ is in the class $o(G)$, written $H = o(G)$, if $\lim_{L \to \infty} \frac{|H(L)|}{|G(L)|} = 0$.
	\item We write $G \approx H$ if $G/H = O(1)$ and $H/G = O(1)$.
\end{itemize}

\section{Preliminaries}

\subsection{Coordinate change}

Under the coordinate change $y \mapsto x - y$, the Standard map $F_L$
(defined in~\eqref{e_Standard Map definition})is
conjugate to the map
\[
\hat F_L(x, y) = (2x - y + L \sin (2 \pi x) \modone, x) \, ,
\]
which we regard as a map on $\T^2$. This change in the $y$-coordinate
has no effect on the analysis of our diffusion limit, since the
observable $\phi$ is $x$-dependent. This form for the Standard Map is
convenient and will be used from now on. Hereafter we abuse notation and
write $F = \hat F_L$, dropping the subscript $L$ (which is implicit
throughout).  Additionally, we define
\begin{align*}
f = f_L : \T^1 \to \R \, , \quad f(x) := 2x + L \sin(2 \pi x) \, ,
\end{align*}
so that $F = F_L$ has the form
\begin{align*}
F(x, y) = (f(x) - y \modone, x) \, .
\end{align*}
In all that follows, we regard $F$ as a map on the torus $\T^2 = \T^1 \times \T^1$. At times, it is also
convenient to use instead the map $\tilde F : \T^2 \to \R \times \T^1$ obtained by omitting
the ``$\modone$'' in the $x$-coordinate.

\subsection{Predominant hyperbolicity of $F$}

For fixed $\eta \in (0,1)$, define
\[
\Sc_\eta = \{ (x,y) \in \T^2 :  |2 + 2 \pi L \cos(2 \pi x)| \leq 2 L^\eta \} = \Bc_\eta \times \T^1 
\]
For all $L$ large and any $\eta \in (0,1)$, the set $\Sc_\eta$ consists
of two small, disjoint vertical strips in $\T^2$; observe that,
trivially, $S_{\eta}\subset S_{\eta'}$ for $\eta < \eta'$.  Away from
the set $\Sc_\eta$, the map $F$ is strongly expanding in the horizontal
direction to order $L^\eta$; for this reason we refer to the $\Sc_\eta$
as {\bf critical strips}.

To make this picture more precise, for $\xi > 0$ let us define the {\bf
horizontal cone}
\begin{align*}
\Cc_\xi = \{ v  = (u, w) \in \R^2 : |w| \leq \xi |u| \} \, .
\end{align*}



\begin{lem}\label{lem:predomHyp} For all $L$ sufficiently large, the
following holds for each $\eta \in (0,1)$.
\begin{enumerate}[(a)]
\item The set $\Bc_\eta$ is the union of two disjoint intervals,
each of length $\approx L^{-1 + \eta}$, containing respectively
the points $1/4$ and $3/4$. In particular, the set $\Sc_\eta$
satisfies
      \begin{align*}
      \Leb(\Sc_\eta) = O(L^{-1 + \eta}) \, .
      \end{align*}
      \item Let $\xi \leq  L^\eta$. For all
      $p = (x,y) \in \T^2 \setminus \Sc_\eta$, we have that
      \begin{align*}
      dF_p \Cc_\xi \subset \Cc_{\xi'}
      \end{align*}
      for any $\xi' \geq \frac{1}{2L^\eta - \xi}$.
  \end{enumerate}
  \end{lem}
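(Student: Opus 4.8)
The plan is to read $dF_p$ off the form $F(x,y)=(f(x)-y,\,x)$ and reduce both statements to elementary one-variable facts about $f'(x)=2+2\pi L\cos(2\pi x)$ and $f''(x)=-4\pi^2 L\sin(2\pi x)$.

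\emph{Part (a).} The condition $x\in\Bc_\eta$ is exactly $|f'(x)|\le 2L^\eta$, i.e.\ $\cos(2\pi x)\in\bigl[\tfrac{-1-L^\eta}{\pi L},\tfrac{-1+L^\eta}{\pi L}\bigr]$. Since $\eta<1$, for $L\ge 1$ this target interval lies inside $[-\tfrac2\pi,\tfrac2\pi]$, so everywhere on $\Bc_\eta$ one has $|\cos 2\pi x|\le\tfrac2\pi$ and hence $|\sin 2\pi x|\ge c_0:=\sqrt{1-4/\pi^2}>0$. I would first observe that $f'$ is strictly monotone on each of the arcs $(0,\tfrac12)$ and $(\tfrac12,1)$ (because $f''$ has a strict sign there), running from $\approx +L$ down to $\approx -L$ on the first arc and back up on the second; together with $|f'(\tfrac14)|=|f'(\tfrac34)|=2\le 2L^\eta$ and $|f'(0)|,|f'(\tfrac12)|>2L^\eta$ for $L$ large, this forces $\Bc_\eta$ to be exactly two disjoint intervals, one inside $(0,\tfrac12)$ containing $\tfrac14$ and one inside $(\tfrac12,1)$ containing $\tfrac34$. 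Each such interval has $f'$ equal to $\pm 2L^\eta$ at its endpoints, so by the mean value theorem its length is $4L^\eta/|f''(\xi)|$ for some interior $\xi$, and since $|f''|=4\pi^2 L|\sin 2\pi x|\in[4\pi^2 c_0 L,\,4\pi^2 L]$ on $\Bc_\eta$ this length is $\approx L^{-1+\eta}$. Finally $\Sc_\eta=\Bc_\eta\times\T^1$ gives $\Leb(\Sc_\eta)=\Leb_{\T^1}(\Bc_\eta)=O(L^{-1+\eta})$.

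\emph{Part (b).} From $dF_p(u,w)=(f'(x)u-w,\,u)$, take $v=(u,w)\in\Cc_\xi$ with $|w|\le\xi|u|$, and $p=(x,y)\notin\Sc_\eta$ so that $|f'(x)|>2L^\eta$, with $\xi\le L^\eta$. Then $|f'(x)u-w|\ge(|f'(x)|-\xi)|u|>(2L^\eta-\xi)|u|$, and $2L^\eta-\xi>0$ since $\xi\le L^\eta<2L^\eta$; hence $|u|\le(2L^\eta-\xi)^{-1}|f'(x)u-w|\le\xi'|f'(x)u-w|$ for any $\xi'\ge(2L^\eta-\xi)^{-1}$, which is exactly $dF_p v\in\Cc_{\xi'}$ (the degenerate case $u=0$ forces $v=0$ and is trivial).

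Part (b) is a two-line computation; the only place needing any care is part (a), specifically fixing a threshold on $L$ independent of $\eta\in(0,1)$ so that the crude containment $\cos 2\pi x\in[-\tfrac2\pi,\tfrac2\pi]$ already pins $\Bc_\eta$ to neighborhoods of $\tfrac14,\tfrac34$, and then checking that the constants implicit in $|f''|\approx L$ and in the length estimate $\approx L^{-1+\eta}$ can be taken uniform in $\eta$ — which they can, because the lower bound $|\sin 2\pi x|\ge c_0$ derived from $|\cos 2\pi x|\le\tfrac2\pi$ is itself uniform.
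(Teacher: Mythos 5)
Your proof is correct and follows essentially the same route as the paper's: for (a) you reduce $\Bc_\eta$ to a condition on $\cos(2\pi x)$ and for (b) you compute $dF_p$ explicitly and bound the slope of the image of a cone vector. Your part (a) is actually substantially more complete than the paper's one-line justification, which only gives the containment $\Bc_\eta \subset \{2\pi|\cos 2\pi x|\le 4L^{-1+\eta}\}$ (hence the measure upper bound) and leaves the two-intervals structure and the matching lower bound on the length implicit; your monotonicity-of-$f'$ argument via $f''$ and the mean value theorem supplies both, and you also note why the implicit constants and the threshold on $L$ can be taken uniform in $\eta\in(0,1)$, a point the paper elides. One small remark: the matrix the paper writes down for $dF_p$ has its off-diagonal signs flipped relative to what $F(x,y)=(f(x)-y,x)$ actually gives (which is what you use), but since the argument is about cones defined by absolute values this has no effect on the conclusion.
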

  \begin{proof}
  (a) Since $S_\eta = \{|2 L^{-1} + 2\pi \cos(2\pi x)| \le 2 L^{-1 + \eta}\} \subset \{2\pi |\cos{2\pi x}| \le 4 L^{-1 + \eta}\}$, the estimate follows easily.

  (b) Note $|\dot f(x)| > 2L^\eta$ for $x \notin \Bc_\eta$. For a tangent vector $(1, m) \in C_\xi$, we have
  \[
  	\begin{aligned}
      dF_p \bmat{1 \\ m}  & = \bmat{ 2 + 2\pi L \cos(2 \pi x) & 1 \\ -1 & 0  }\bmat{ 1 \\ m}
  = \bmat{\dot f(x) + m \\  - 1} \\
  & =  (\dot f(x) + m) \bmat{1 \\ \frac{-1}{\dot f(x) + m}}
    \in C_{\xi'}. & \qedhere
  	\end{aligned}
  \]
  \end{proof}

  For our purposes, we usually work with the cone $\Cc_{1/10}$, which by
  Lemma \ref{lem:predomHyp} is mapped into itself away from $\Sc_{1/4}$, if $L$ is sufficiently large.

%
%

\subsection{u-curves}

We work mostly with $C^2$ curves, the tangents to which lie in the cone $\Cc_{1/10}$. More precisely:
\begin{defn}
Let $\gamma$ be a $C^2$ embedded curve in $\T^2$. We say that $\gamma$ is a {\bf u-curve} if $\gamma = \{ (x, h_\gamma(x) ) : x \in I_\gamma\}$, where
\begin{itemize}
	\item[(a)] $I_\gamma \subsetneq \T^1$ is an open interval; and
	\item[(b)] $h_\gamma : I_\gamma \to \T^1$ is a $C^2$ mapping with $\| \dot h_\gamma\|_{C^0} \leq 1/10$, $\| \ddot h_\gamma\|_{C^0} \leq L$.
\end{itemize}
The {\bf length} of a u-curve $\gamma$ is defined (with a small abuse of
terminology) as the length of the interval $I_\gamma$.  We call $\gamma$
a {\bf fully-crossing u-curve} if $I_\gamma = (0,1)$.
\end{defn}

Away from the critical strips, u-curves map to u-curves, for which the
following lemma is useful.

\begin{lem}\label{lem:ucurve}
Fix $\eta \in [1/4, 1)$. Let $\gamma$ be a u-curve with
$\gamma \cap S_\eta = \emptyset$.  Then,
$\tilde \gamma := \tilde F(\gamma)$ is a $C^2$ curve of the form
$\tilde \gamma = \{ (x, \tilde h(x)) : x \in \tilde I\}$, where
$\tilde I \subset \R$ is an interval and
$\tilde h : \tilde I \to \T^2$ is a $C^2$ mapping with
$\| \frac{d}{dx} \tilde h\| \le 1/10$, $\| \frac{d^2}{dx^2} \tilde h\| \leq
L$.
\end{lem}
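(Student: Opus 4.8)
The plan is to compute $\tilde F(\gamma)$ explicitly and verify the two derivative bounds using the fact that $\gamma$ avoids the critical strip $S_\eta$, where $|\dot f| > 2L^\eta \geq 2L^{1/4}$. Write $\gamma = \{(x, h(x)) : x \in I\}$ with $\|\dot h\|_{C^0} \leq 1/10$ and $\|\ddot h\|_{C^0} \leq L$. Since $\tilde F(x, y) = (f(x) - y, x)$ (dropping the $\modone$), the image $\tilde\gamma$ is the parametrized curve $x \mapsto (f(x) - h(x),\, x)$. First I would observe that the first component $u(x) := f(x) - h(x)$ has derivative $\dot u(x) = \dot f(x) - \dot h(x)$; since $|\dot f(x)| > 2L^\eta$ and $|\dot h(x)| \leq 1/10$, for $L$ large we have $|\dot u(x)| > 2L^\eta - 1/10 > 0$, so $u$ is a strictly monotone $C^2$ function on $I$ and hence a $C^2$ diffeomorphism onto an interval $\tilde I := u(I) \subset \R$. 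This lets us write $\tilde\gamma$ as a graph $\{(x, \tilde h(x)) : x \in \tilde I\}$ with $\tilde h = u^{-1}$ (composed with the projection to $\T^1$, which is harmless).

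Next I would bound $\|\frac{d}{dx}\tilde h\|$. By the inverse function theorem, $\frac{d}{dx}\tilde h(x) = 1/\dot u(u^{-1}(x))$, so $|\frac{d}{dx}\tilde h| \leq 1/(2L^\eta - 1/10) \leq 1/10$ for $L$ sufficiently large, since $2L^\eta \geq 2L^{1/4} \to \infty$. This is in fact the same estimate that appears in Lemma \ref{lem:predomHyp}(b) with $\xi = 1/10$, which is consistent. For the second derivative, differentiate again: $\frac{d^2}{dx^2}\tilde h(x) = -\ddot u(u^{-1}(x)) \cdot (\frac{d}{dx}\tilde h(x))^3 = -\ddot u(u^{-1}(x))/\dot u(u^{-1}(x))^3$. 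Now $\ddot u = \ddot f - \ddot h$, with $\|\ddot f\|_{C^0} = \|4\pi^2 L \sin(2\pi \cdot)\|_{C^0} \leq 4\pi^2 L$ and $\|\ddot h\|_{C^0} \leq L$, so $\|\ddot u\|_{C^0} \leq (4\pi^2 + 1)L$. Therefore $|\frac{d^2}{dx^2}\tilde h| \leq (4\pi^2+1)L / (2L^\eta - 1/10)^3$. Since $\eta \geq 1/4 > 0$, the denominator grows like $L^{3\eta}$, which dominates $L$ for $L$ large (as $3\eta \geq 3/4$, we need $L / L^{3\eta} = L^{1 - 3\eta}$; note $1 - 3\eta$ could be positive when $\eta < 1/3$, so one must be slightly careful here — but even at $\eta = 1/4$ we get $L^{1/4}$ in the numerator against $L^{3/4}$ in the denominator, so the bound is $O(L^{-1/2}) \leq L$ trivially). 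Hence $|\frac{d^2}{dx^2}\tilde h| \leq L$ for $L$ sufficiently large.

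The only mild subtlety — and the place I would be most careful — is the interaction with the $\modone$ reduction: $\tilde\gamma$ is defined as the image under $\tilde F$, which keeps the $x$-coordinate in $\R$, so $\tilde I$ genuinely is an interval in $\R$ and no wraparound occurs in the first (graphing) variable; the second coordinate $\tilde h$ takes values in $\T^1$, but since we only measure $\|\frac{d}{dx}\tilde h\|$ and $\|\frac{d^2}{dx^2}\tilde h\|$ through a local lift, the bounds are unaffected. One should also note that $u$ being monotone on the connected interval $I$ is what guarantees injectivity of the parametrization, so $\tilde\gamma$ is genuinely an embedded $C^2$ curve expressible as a single graph; this uses crucially that $\gamma \cap S_\eta = \emptyset$ forces $\dot f$ to have constant sign on all of $I$ (each connected component of $\T^1 \setminus \Bc_\eta$ lies in a region where $\dot f$ does not vanish, but in fact we need $\gamma$ itself, hence $I$, to avoid $\Bc_\eta$, which is the hypothesis). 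Everything else is routine differentiation and estimation.
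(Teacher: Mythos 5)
Your proof is correct and follows essentially the same route as the paper's: introduce $u(x) = f(x) - h_\gamma(x)$ (the paper calls this $f_\gamma$), observe that $\tilde F(x, h_\gamma(x)) = (u(x), x)$, take $\tilde h = u^{-1}$, and read off the derivative bounds from the inverse-function formulas $\frac{d}{dx}\tilde h = (1/\dot u)\circ u^{-1}$ and $\frac{d^2}{dx^2}\tilde h = -(\ddot u/\dot u^3)\circ u^{-1}$ together with $|\dot u|\gtrsim L^\eta$ and $|\ddot u| = O(L)$. The extra remarks you include — that the hypothesis forces $I_\gamma$ into a single component of $\T^1\setminus\Bc_\eta$ (so $\dot u$ has constant sign and $u$ is a diffeomorphism onto an interval), and the explicit check that $L^{1-3\eta}\le L$ even at $\eta = 1/4$ — are accurate and make explicit two small points the paper leaves to the reader.
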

From Lemma \ref{lem:ucurve}, we can represent $F(\gamma)$
as a finite union of u-curves by subdividing $\tilde \gamma$ into u-curves of length $< 1$
and then projecting $\R \times \T^1 \to \T^2$.

\begin{proof}[Proof of Lemma \ref{lem:ucurve}]
Define $f_\gamma : I_\gamma \to \R$ by setting
\[
f_\gamma(x) = f(x) - h_\gamma(x) \, .
\]
Since $\dot{f}_\gamma(x) = 2 + 2\pi L \cos(2\pi x) - \dot{h}_\gamma(x)$, we have the estimate $|\dot{f}_\gamma(x)| \ge 2L^\eta - |\dot{h}_\gamma| \ge L^\eta$, which will be useful throughout. Let us also note $|\ddot{f}_\gamma(x)| = |4\pi^2 L \sin(2\pi x) + \ddot{h}_\gamma(x)| = O(L)$.

As one can check, $\tilde F(x, h_\gamma(x)) = (f_\gamma(x), x)$,
from which Lemma \ref{lem:ucurve} follows with
$\tilde h := (f_\gamma)^{-1} : \tilde I \to \T^1$, where
$\tilde I : = f_\gamma(I)$. The estimates on
$\| \frac{d}{dx} \tilde h\|, \| \frac{d^2}{dx^2} \tilde h\|$
immediately follow from the formulae
\begin{align*}
\frac{d}{dx} \tilde h &= \frac{1}{\dot f_\gamma} \circ f_\gamma^{-1}, &\frac{d^2}{dx^2} \tilde h &= - \frac{\ddot f_\gamma}{( \dot f_\gamma)^3} \circ f_\gamma^{-1} \, . \qedhere
\end{align*}
\end{proof}
\subsection{Standard pairs}
Let $a_0 \in (0,1/8]$.
\begin{defn}
  A {\bf measure pair} is a pair $(\gamma, \rho)$, where $\gamma$ is a
  u-curve and $\rho : I_\gamma \to (0,\infty)$ is a nonvanishing $C^1$
  probability density on $I_\gamma$ (in particular,
  $\int_{I_\gamma} \rho dx = 1$).  We distinguish three subclasses of
  measure pairs:
\begin{enumerate}[(a)]
\item We call $(\gamma, \rho)$ a {\bf standard pair} if (i)
$|I_\gamma| > a_0$, and (ii) $\rho$ satisfies the distortion estimate
\begin{align}\label{eq:dist}
\left\|\frac{d \log \rho}{dx}\right\| &\leq 3 C_0 \, ,
\end{align}
where $C_0 = 8 \pi^2$, $\|\cdot\|$ denotes the uniform norm and
$a_0 > 0$ is a small, fixed positive constant (see above). We call
$(\gamma, \rho)$ a {\bf fully-crossing standard pair} if $\gamma$ is
fully-crossing.

\item We call $(\gamma, \rho)$ a {\bf substandard pair} if (i)
$|I_\gamma| \in [L^{- \frac12}, a_0]$; (ii) $\rho$ satisfies
$\| \frac{d \log \rho}{dx} \| \leq 2 C_0 L^{\frac12}$; and (iii)
$I_\gamma \cap \Bc_{1/2} = \emptyset$ (equivalently,
$\gamma \cap \Sc_{1/2} = \emptyset$).
\end{enumerate}
\end{defn}

\begin{rmk}
The value $a_0 \in (0,1/8]$ above is fixed and independent of $L$,
although for our purposes it will be useful fix it at a sufficiently
small value. This will be done by the end of Section 3 (see Remark
\ref{rmk:parameter}).  Before then, however, we include the parameter
$a_0$ in our $O(\cdots)$ estimates.
\end{rmk}

Moreover, for a curve $\gamma$ we write $\Leb_\gamma$ for the
(un-normalized) Lebesgue measure on $\gamma$. Since
$x \mapsto (x, h_\gamma(x))$ is a diffeomorphism of $I_\gamma$ onto
$\gamma$, we identify $\Leb_\gamma$ with the corresponding measure on
$I_\gamma$ given by
\[
 d \Leb_\gamma(x, h_\gamma(x)) = \sqrt{1 + \dot h_\gamma^2(x) } \, dx \, .
\]

\medskip


Some additional conventions: we regard measure pairs $(\gamma, \rho)$ as
measures on the curve $\gamma$ itself via the parametrization
$x \mapsto (x, h_\gamma(x))$. In particular, $F_*(\gamma, \rho)$ refers
to the pushforward measure of $(\gamma, \rho)$ on the image set
$F(\gamma)$ (which, we note, need not be a u-curve). Moreover, for
continuous observables $\phi : \T^2 \to \R$ we write
$\int \phi \, d (\gamma, \rho)$ for the integral of $\phi$ with respect
to the measure $(\gamma, \rho)$ on $\gamma$.

\medskip

Before proceeding, we record the following distortion estimate, which
will be used many times in the coming proofs.

\begin{lem}\label{lem:distStdPair}
Let $\eta \in [1/4, 1]$.  Fix u-curves $\gamma, \gamma' \subset \T^2$
for which $\gamma \cap \Sc_\eta = \emptyset$ and
$F(\gamma) \supset \gamma'$.  Let $(\gamma, \rho)$ be a measure pair,
and define $\rho'$ so that $(\gamma', \rho')$ is the normalization of
$F_*(\gamma, \rho) |_{\gamma'}$.  Then,
\begin{align*}
\bigg\| \frac{d \log \rho' }{dx} \bigg\| \leq L^{- \eta} \bigg\|
\frac{d \log \rho}{dx} \bigg\| + C_0 L^{1 - 2 \eta} \, ,
\end{align*}
where $C_0 := 8 \pi^2$ and $\| \cdot \|$ refers to the uniform norm.

  \end{lem}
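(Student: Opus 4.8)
The plan is to track how the logarithmic derivative of the density transforms under one step of the dynamics, using the explicit formula for $\tilde F$ on a u-curve already established in the proof of Lemma~\ref{lem:ucurve}. Recall from there that if $\gamma = \{(x, h_\gamma(x)) : x \in I_\gamma\}$ then $\tilde F(\gamma)$ is parametrized by $\tilde h = (f_\gamma)^{-1}$ where $f_\gamma(x) = f(x) - h_\gamma(x)$, and that $\gamma \cap \Sc_\eta = \emptyset$ gives the crucial bounds $|\dot f_\gamma| \geq L^\eta$ and $|\ddot f_\gamma| = O(L)$. The pushforward of a density $\rho$ under the graph map composed with $\tilde F$ is, by the change-of-variables formula, proportional to $\big(\rho / |\dot f_\gamma|\big) \circ f_\gamma^{-1}$ (up to the harmless $\sqrt{1+\dot h^2}$ Jacobian factors coming from identifying $\Leb_\gamma$ with Lebesgue on $I_\gamma$, which I will need to check contribute only lower-order terms — this is where the restriction to $C^2$ curves with $\|\ddot h_\gamma\| \leq L$ is used). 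Restricting further to the subcurve $\gamma'$ and renormalizing does not change the logarithmic derivative, since renormalization only multiplies $\rho'$ by a constant.

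Concretely, I would write $\log \rho' (u) = \log \rho(f_\gamma^{-1}(u)) - \log |\dot f_\gamma(f_\gamma^{-1}(u))| + \text{const}$ (absorbing the Jacobian correction, to be bounded separately), differentiate in $u$, and use $\frac{d}{du} f_\gamma^{-1}(u) = 1/\dot f_\gamma(f_\gamma^{-1}(u))$ to get
\[
\frac{d \log \rho'}{du}(u) = \frac{1}{\dot f_\gamma(x)}\left( \frac{d\log\rho}{dx}(x) - \frac{\ddot f_\gamma(x)}{\dot f_\gamma(x)} \right) + (\text{Jacobian terms}), \qquad x = f_\gamma^{-1}(u).
\]
Taking uniform norms and inserting $|\dot f_\gamma| \geq L^\eta$ and $|\ddot f_\gamma| \leq C_0 L$ (for an appropriate constant; here is where the explicit value $C_0 = 8\pi^2$ enters, from $|\ddot f_\gamma| = |4\pi^2 L \sin 2\pi x + \ddot h_\gamma| \leq 4\pi^2 L + L \leq 8\pi^2 L$ for $L$ large) yields
\[
\left\| \frac{d\log\rho'}{dx} \right\| \leq L^{-\eta} \left\| \frac{d\log\rho}{dx} \right\| + C_0 L^{1-2\eta} + (\text{Jacobian contribution}),
\]
and the last term must be shown to be absorbable into the stated bound — likely it is $O(L^{1-2\eta})$ or smaller, possibly after slightly adjusting constants, and is genuinely negligible because $\|\ddot h_\gamma\| \leq L$ while $|\dot f_\gamma| \geq L^\eta$ with $\eta \geq 1/4$.

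The main obstacle I anticipate is bookkeeping the $\sqrt{1+\dot h_\gamma^2}$ and $\sqrt{1+\dot h_{\gamma'}^2}$ Jacobian factors cleanly: the density $\rho'$ as a density on $I_{\gamma'}$ versus as a density on the curve $\gamma'$ differ by this factor, and one must make sure the stated inequality is for the convention actually used elsewhere in the paper (densities on $I_\gamma$ with respect to $dx$). Since $h_\gamma$ is $C^2$ with $\|\dot h_\gamma\| \leq 1/10$ and $\|\ddot h_\gamma\| \leq L$, the logarithmic derivative of $\sqrt{1+\dot h_\gamma^2}$ is bounded by $\|\ddot h_\gamma\| / (1 + \dot h_\gamma^2) \cdot |\dot h_\gamma| = O(L)$, which at first looks dangerous, but it is composed with $f_\gamma^{-1}$ and divided by $\dot f_\gamma$, so it contributes $O(L^{1-\eta})$; for $\eta \geq 1/2$ this is dominated by $C_0 L^{1-2\eta}$ only if we are careful, so the estimate as stated with $\eta \in [1/4,1]$ presumably hides this in the constant $C_0$ or uses a sharper bound on $\dot h_{\gamma'}$ coming from Lemma~\ref{lem:ucurve} (where $\|\frac{d}{dx}\tilde h\| = 1/|\dot f_\gamma| \circ f_\gamma^{-1} \leq L^{-\eta}$, so $\dot h_{\gamma'}$ is in fact tiny). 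Exploiting this smallness of $\dot h_{\gamma'}$ — rather than just the a priori bound $1/10$ — should make the Jacobian terms genuinely lower order, and that is the one place where I would slow down and be careful rather than wave hands.
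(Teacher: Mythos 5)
Your core computation is exactly the paper's: write $\rho'$ as the pushforward of $\rho$ under $x \mapsto \hat f_\gamma(x) := f_\gamma(x) \bmod 1$ (up to a normalizing constant), differentiate $\log\rho'$, and bound the two resulting terms using $|\dot f_\gamma| \geq L^\eta$ and $|\ddot f_\gamma| = |4\pi^2 L\sin(2\pi x) + \ddot h_\gamma| \leq (4\pi^2 + 1)L \leq C_0 L$. That gives the stated inequality directly, and this is precisely what the paper does.

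The one place you went wrong is the ``main obstacle'' you flag: there are no $\sqrt{1+\dot h_\gamma^2}$ Jacobian factors to track, so the ``Jacobian contribution'' you are bracing yourself to bound does not exist. This follows from the paper's convention, stated in the definition of a measure pair: $\rho : I_\gamma \to (0,\infty)$ is a probability density with respect to $dx$ on the parametrizing interval $I_\gamma$, not with respect to arc length $d\Leb_\gamma$. The identification of $(\gamma,\rho)$ with a measure on the curve is by pushing $\rho(x)\,dx$ forward through $x\mapsto(x,h_\gamma(x))$, and likewise for $(\gamma',\rho')$. So the whole change of variables is the one-dimensional map $\hat f_\gamma : I' \to I_{\gamma'}$, and the Jacobian is just $|\dot f_\gamma|$. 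The formula $d\Leb_\gamma = \sqrt{1+\dot h_\gamma^2}\,dx$ that appears in the paper is an aside used elsewhere; it plays no role here. (Your observation that $\|\dot h_{\gamma'}\| = \|1/\dot f_\gamma \circ f_\gamma^{-1}\| \leq L^{-\eta}$ is correct and would indeed save you if those factors did appear, but they don't.) With that clarification, your proof reduces to the paper's line for line.
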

  \begin{proof}
    Let $f_\gamma$ be as in the proof of Lemma \ref{lem:ucurve}.  Define
    $I' \subset I$ to be the subinterval for which
    $\gamma' = F\big( \graph (h_\gamma|_{I'})\big)$, noting that
    $x \mapsto \hat f_\gamma(x) := f_\gamma(x) \modone$ is a $C^2$
    diffeomorphism $I' \to I_{\gamma'}$. Clearly,
  \[
  \rho' = \frac{1}{\int_{I'} \rho \, dx} \cdot \frac{\rho}{|\dot
  f_\gamma|} \circ ( \hat f_\gamma|_{I'})^{-1} \, .
  \]
  For simplicity, assume $\dot f_\gamma > 0$ on $I'$ (either this or
  $\dot f_\gamma < 0$ holds since $\dot f_\gamma \neq 0$ on $I'$);
  otherwise the formulas below differ by a minus sign. We compute
  \begin{align}\label{eq:distortion11}
  \frac{d \log  \rho'}{dx} = \frac{1}{\rho'} \frac{d \rho'}{dx} = \bigg( \frac{1}{\dot f_\gamma} \frac{d \log \rho}{dx} - \frac{\ddot f_\gamma}{(\dot f_\gamma)^2} \bigg) \circ (\hat f_\gamma|_{I'})^{-1} \, ,
  \end{align}
  from which we get the estimate
  \[
  \bigg\| \frac{d \log \rho'}{dx} \bigg\| \leq L^{- \eta} \bigg\|
  \frac{d \log \rho}{dx} \bigg\| + C_0 L^{1 - 2 \eta} \, ,
  \]
  where $C_0 = 8 \pi^2$.
  \end{proof}

  \section{Images of standard pairs and correlation decay}

  Our primary aim in this section is to describe the pushforward
  $F^n_*(\gamma, \rho)$ of a fully-crossing standard pair $(\gamma, \rho)$. In Section 3.1,
  we consider pushing forward measure pairs one timestep, while in Section
  3.2 we will iterate these arguments to describe $F^n_*(\gamma, \rho)$.
  Applications to decay of correlations are derived in Section 3.3.  This
  includes the proof of Theorem~\ref{thm:decayLebesgue}.

  \subsection{Pushing forward standard pairs by $F$}

  Here we describe how to push forward measure pairs of varying
  regularity: fully crossing, standard, and substandard.

  \subsubsection*{Notation and setup}

  For a measure pair $(\gamma, \rho)$, we will describe the pushforward $F_*(\gamma, \rho)$.
  Depending on the regularity (e.g., standard versus substandard) of $(\gamma, \rho)$, we will subdivide
  \[
  F_*(\gamma, \rho) = \mu_{\Lc_{(\gamma, \rho)}} + \mu_{\Ic_{(\gamma, \rho)}} + \mu_{\Jc_{(\gamma, \rho)}} + \mu_{\Ec_{(\gamma, \rho)}} \, ,
  \]
  where
  $\mu_\Lc = \mu_{\Lc_{(\gamma, \rho)}}, \mu_\Ic = \mu_{\Ic_{(\gamma,
  \rho)}}, \mu_\Jc = \mu_{\Jc_{(\gamma, \rho)}}$ are, respectively,
  weighted sums over collections
  $\Lc = \Lc_{(\gamma, \rho)}, \Ic = \Ic_{(\gamma, \rho)}, \Jc =
  \Jc_{(\gamma, \rho)}$ of measure pairs consisting, respectively, of
  fully-crossing standard pairs, standard pairs, and substandard pairs.
  Here, $\mu_\Ec = \mu_{\Ec_{(\gamma, \rho)}}$ is a measure corresponding
  to the portion of $F_*(\gamma, \rho)$ which we do not control (the `error'), and is
  supported on a subset $\Ec = \Ec_{(\gamma, \rho)}$ of $F(\gamma)$ for
  which $F_*(\gamma, \rho)|_\Ec = \mu_\Ec$.

  \begin{nrmk}
  Abusing notation somewhat, when it is clear from context we will use
  $\Lc$ to refer to (i) a collection $\{ (\gamma', \rho')\}$ of
  fully-crossing standard pairs; (ii) a partition of a subset of
  $F(\gamma)$ into fully-crossing u-curves $\gamma'$; and (iii) the
  subset of $F(\gamma)$ itself, i.e., the union over all
  $\gamma' \in \Lc$.  The same applies to each of $\Ic, \Jc$.
  \end{nrmk}


For a measure $\mu$, we write $\| \mu\|$ for the total mass of $\mu$.

\bigskip

We begin by describing the $\Lc, \Ic, \Jc, \Ec$ decomposition when $(\gamma, \rho)$ is a standard pair, not
necessarily fully-crossing.

\begin{lem}\label{lem:cut-standard}
Let $(\gamma, \rho)$ be a standard pair for which $\gamma$ is not necessarily fully-crossing. Then,
\[
F_*(\gamma, \rho) = \mu_\Lc + \mu_\Ec \, ,
\]
where $\| \mu_\Ec\| = O(a_0^{-1} L^{- 1/2})$.
\end{lem}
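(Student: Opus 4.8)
The plan is to analyze the image $\tilde F(\gamma) \subset \R \times \T^1$ and partition it into fully-crossing u-curves plus a controlled error, distinguishing the part of $\gamma$ that meets the critical strip $\Sc_{1/4}$ from the part that does not. First I would split $I_\gamma = (I_\gamma \cap \Bc_{1/4}) \cup (I_\gamma \setminus \Bc_{1/4})$. On the portion meeting $\Bc_{1/4}$, I have no expansion control, so I simply dump all of that mass into $\mu_\Ec$; since $(\gamma,\rho)$ is a standard pair its density $\rho$ satisfies the distortion bound \eqref{eq:dist}, hence $\rho \approx 1/|I_\gamma| = O(a_0^{-1})$ uniformly (integrating the log-derivative bound over an interval of length $\le a_0 \le 1/8$ gives a bounded multiplicative distortion), while $\Leb_\gamma$ of the piece over $\Bc_{1/4}$ is $O(L^{-1+1/4}) = O(L^{-3/4})$ by Lemma~\ref{lem:predomHyp}(a). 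This contributes $O(a_0^{-1} L^{-3/4}) = O(a_0^{-1}L^{-1/2})$ to $\|\mu_\Ec\|$, which is within budget.

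On the complementary part $\gamma_0 := \gamma|_{I_\gamma \setminus \Bc_{1/4}}$ (a finite union of u-curves, each disjoint from $\Sc_{1/4}$), I would apply Lemma~\ref{lem:ucurve}: the image $\tilde F(\gamma_0)$ is a finite union of $C^2$ curves of the form $\{(x,\tilde h(x))\}$ with $\|\tilde h'\|\le 1/10$, $\|\tilde h''\|\le L$, defined over intervals in $\R$ whose total length is $\int_{\gamma_0} |\dot f_\gamma|\,d\Leb_\gamma \ge L^{1/4}\,\Leb(\gamma_0) \ge L^{1/4}(a_0 - O(L^{-3/4})) \gtrsim a_0 L^{1/4}$, using the expansion estimate $|\dot f_\gamma|\ge L^{1/4}$ from the proof of Lemma~\ref{lem:ucurve} (with $\eta = 1/4$). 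Now I partition each such long image curve: sweeping across $\T^1$, its graph crosses the full circle many times (roughly $a_0 L^{1/4}$ times in total). Each full crossing gives, after projecting $\R\times\T^1 \to \T^2$ and cutting at integer points, a fully-crossing u-curve $\gamma'$; these form the collection $\Lc$. The leftover "partial" pieces at the two ends of each connected image curve — of total length less than $1$ per connected piece — I put into $\mu_\Ec$. The number of connected pieces of $\tilde F(\gamma_0)$ is $O(1)$ (bounded by the number of components of $I_\gamma\setminus\Bc_{1/4}$, which is $O(1)$ since $\Bc_{1/4}$ has two components), so the total leftover $\Leb$-length is $O(1)$; its $\mu$-mass is $(\text{pushforward density}) \times O(1)$. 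By Lemma~\ref{lem:distStdPair}, the pushforward density on each image u-curve is comparable to the reciprocal of its $\R$-length... but here I instead bound directly: the mass of any sub-piece of $\tilde F_*(\gamma_0,\rho)$ over an $x$-interval of length $\ell$ is $\int (\rho/|\dot f_\gamma|)\,dx$ over the preimage, which by $\rho = O(a_0^{-1})$ and $|\dot f_\gamma|\ge L^{1/4}$ is $O(a_0^{-1} L^{-1/4}\ell)$; taking $\ell = O(1)$ gives $O(a_0^{-1}L^{-1/4})$ for the leftover — again within the claimed $O(a_0^{-1}L^{-1/2})$? No: $L^{-1/4} \gg L^{-1/2}$, so this crude bound is not enough, and this is the main obstacle.

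The fix — and the step I expect to be most delicate — is to observe that the two end-pieces of a connected image curve are \emph{not arbitrary} length-$<1$ pieces: the map $\hat f_\gamma$ near an endpoint of a component of $I_\gamma\setminus\Bc_{1/4}$ has derivative of order only $L^{1/4}$ (that is where $|\dot f_\gamma|$ attains its minimum $L^{1/4}$), so an image $x$-interval of length $1$ near such an endpoint has preimage of $\Leb_\gamma$-length $O(L^{-1/4})$, giving mass $O(a_0^{-1} L^{-1/4}\cdot L^{-1/4}) = O(a_0^{-1}L^{-1/2})$. Meanwhile the interior endpoints of $I_\gamma$ (the two actual endpoints of $I_\gamma$, where $|\dot f_\gamma|$ could be as large as $O(L)$) contribute image length-$<1$ pieces with preimage of length $O(L^{-1/4})$ as well — wait, there $|\dot f_\gamma|$ is large so preimage length is $O(L^{-1})$, giving mass $O(a_0^{-1}L^{-1})$, even smaller. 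So in all cases the leftover mass per end-piece is $O(a_0^{-1}L^{-1/2})$, and with $O(1)$ such pieces we get $\|\mu_\Ec\| = O(a_0^{-1}L^{-1/2})$ as claimed. I would then check that each $\gamma'\in\Lc$ is genuinely a fully-crossing standard pair: it is fully-crossing and a u-curve by construction (via Lemma~\ref{lem:ucurve}), and its density satisfies the distortion bound \eqref{eq:dist} by Lemma~\ref{lem:distStdPair} with $\eta = 1/4$, which gives $\|d\log\rho'/dx\| \le L^{-1/4}(3C_0) + C_0 L^{1/2} $ — hmm, that exceeds $3C_0$. Here I would need to either invoke $\eta$ closer to $1$ on the non-critical part (replacing $\Sc_{1/4}$ by $\Sc_{\eta}$ for $\eta$ near $1$ costs only $O(L^{-1+\eta})$ extra error, still $o(L^{-1/2})$ if $\eta < 1/2$, but to get $L^{1-2\eta}$ small I need $\eta > 1/2$, contradicting the error budget) — so the honest resolution is that \eqref{eq:dist} for the \emph{new} pairs is achieved not in one step but is a fixed point: one shows $L^{-\eta}(3C_0) + C_0 L^{1-2\eta} \le 3C_0$ fails for a single step from a general standard pair but the relevant claim here only asserts the $\Lc,\Ec$ split exists with the stated error mass, and the standard-pair property of the $\gamma'$ follows because the incoming bound $3C_0$ contracts: with $\eta=1/4$ we get $\le (3C_0)L^{-1/4} + C_0 L^{1/2}$, which for large $L$ is $\le 2C_0 L^{1/2} \le 3C_0$ only if... it is not. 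I would therefore reexamine: the correct reading is that Lemma~\ref{lem:cut-standard}'s output pairs need only be standard (distortion $\le 3C_0$) because the distortion of the \emph{input} standard pair ($\le 3C_0$) combined with Lemma~\ref{lem:distStdPair} at $\eta = 1/4$ is dominated, for $L$ large, by re-cutting: each fully-crossing piece can be further processed so that the $C_0 L^{1/2}$ term is absorbed — but actually I suspect the intended argument uses $\eta$ with $1-2\eta \le 0$, i.e. $\eta \ge 1/2$, on the bulk, accepting error $O(a_0^{-1}L^{-1/2})$ from $\Sc_{1/2}$ exactly (matching the statement!), and treating $\gamma\cap\Sc_{1/2}$, not $\Sc_{1/4}$, as the bad set. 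That is the clean route: use $\eta = 1/2$ throughout, so $\Leb(\Sc_{1/2}) = O(L^{-1/2})$ gives error $O(a_0^{-1}L^{-1/2})$, expansion is $\ge L^{1/2}$, and Lemma~\ref{lem:distStdPair} gives $\|d\log\rho'/dx\|\le L^{-1/2}(3C_0) + C_0 \le 3C_0$ for $L$ large — wait, $C_0 \le 3C_0 - 3C_0 L^{-1/2}$ fails too. The genuine main obstacle, then, is reconciling the distortion constant $3C_0$ with Lemma~\ref{lem:distStdPair}; presumably the resolution is that a standard pair's image over a \emph{fully-crossing} piece, being reparametrized over all of $(0,1)$, automatically has $\|d\log\rho'\| \le C_0 + o(1) < 3C_0$ because the $L^{1-2\eta}$ term with the $\eta=1/2$ choice is exactly $C_0$, and the $L^{-1/2}\cdot 3C_0$ input term is negligible — so the bound is $\le C_0 + 3C_0 L^{-1/2} < 3C_0$ for $L$ large. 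That works. So: \textbf{use $\Sc_{1/2}$ as the bad set, $\eta = 1/2$}, and the lemma follows with $\|\mu_\Ec\| = O(a_0^{-1}L^{-1/2})$ from $\Leb(\Sc_{1/2})$ plus $O(1)$ end-pieces each of mass $O(a_0^{-1}L^{-1/2})$.
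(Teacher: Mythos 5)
Your final resolution --- use $\Sc_{1/2}$, allocate $\gamma\cap\Sc_{1/2}$ to $\Ec$, exploit the expansion bound $|\dot f_\gamma|\ge L^{1/2}$ on $\gamma\setminus\Sc_{1/2}$ so that each of the $O(1)$ non-fully-crossing end-pieces of the image has preimage $x$-length $\le L^{-1/2}$ and hence mass $O(a_0^{-1}L^{-1/2})$, and verify distortion via Lemma~\ref{lem:distStdPair} with $\eta=1/2$ --- is exactly the paper's proof. But the route there contains two genuine slips worth flagging. First, the attempted $\eta=1/4$ fix is not correct: an end-piece near $\partial\Bc_{1/4}$ has preimage $x$-length $O(L^{-1/4})$ and density $\rho=O(a_0^{-1})$, hence mass $O(a_0^{-1}L^{-1/4})$, not $O(a_0^{-1}L^{-1/4}\cdot L^{-1/4})$; the extra $L^{-1/4}$ you multiplied in has no source, and this is precisely why $\eta=1/4$ is the wrong choice for this lemma (it gives error $O(a_0^{-1}L^{-1/4})$, outside the claimed budget). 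Second, you twice assert that $L^{-1/2}\cdot 3C_0 + C_0 \le 3C_0$ fails; in fact it holds for every $L\ge 9/4$, so for $L$ large the image pairs satisfy the distortion bound \eqref{eq:dist} immediately, and there is no need for re-cutting, absorption, or any special pleading about fully-crossing pieces. With these two corrections your sketch is a faithful reconstruction of the paper's argument.
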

\begin{proof}
To start, we allocate $F(\gamma \cap \Sc_{1/2})$ to $\Ec = \Ec_{(\gamma, \rho)}$ and subdivide
$\gamma \setminus \Sc_{1/2}$ into at most three connected components $\check \gamma$.

For each $\check \gamma$, in the notation of Lemma \ref{lem:ucurve},
subdivide $\tilde \gamma = \tilde F(\check \gamma)$ into pieces
$\tilde \gamma_n = \tilde \gamma \cap [n, n + 1), n \in \Z$.
Of the nonempty $\tilde \gamma_n$, at most two have length $< 1$;
these are allocated to $\Ec$ , while the $\tilde \gamma_n$ of length $1$
are are projected to $\T^2$ and allocated to $\Lc = \Lc_{(\gamma, \rho)}$.
Distortion is checked as in Lemma \ref{lem:distStdPair} with $\eta = 1/2$; details
are left to the reader.

To estimate $\|\mu_\Ec\|$, we note that $(\gamma, \rho)(\Sc_{1/2}) = O(a_0^{-1} L^{- 1/2})$,
while for any nonempty $\tilde \gamma_n$ as above, we have $(\gamma, \rho)(\tilde F^{-1}(\tilde \gamma_n))
= O(a_0^{-1} L^{- 1/2})$.
\end{proof}

Next, we consider images of substandard pairs.

\begin{lem}\label{lem:cut-sub}
Let $(\gamma, \rho)$ be a substandard pair. Then,
\[
F_*(\gamma, \rho) = \mu_\Ic + \mu_\Jc + \mu_\Ec \, ,
    \]
    where $\|\mu_\Jc\| = O(a_0)$ and $\| \mu_\Ec\| = O(L^{- 1/2})$.
    \end{lem}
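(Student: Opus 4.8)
The plan is to follow the proof of Lemma~\ref{lem:cut-standard} almost verbatim, the one new ingredient being that the substandard distortion bound $\|\frac{d\log\rho}{dx}\| \le 2 C_0 L^{1/2}$ is calibrated precisely so that a single application of $F$ restores the standard-pair distortion threshold $3 C_0$. Since $\gamma$ is a substandard pair we have $\gamma \cap \Sc_{1/2} = \emptyset$, so Lemmata~\ref{lem:ucurve} and~\ref{lem:distStdPair} apply with $\eta = \tfrac12$. Writing $f_\gamma(x) = f(x) - h_\gamma(x)$ as in the proof of Lemma~\ref{lem:ucurve}, we have $L^{1/2} \le |\dot f_\gamma| = O(L)$ on $I_\gamma$, so $\tilde\gamma := \tilde F(\gamma)$ is a graph over the interval $\tilde I := f_\gamma(I_\gamma)$, and
\[
1 \le |\tilde I| = \int_{I_\gamma} |\dot f_\gamma| \, dx = O(a_0 L) \, ;
\]
here the lower bound is forced by $|I_\gamma| \ge L^{-1/2}$, and it is essential, since it is what guarantees that after pushing forward most of the mass lies on fully-crossing pieces rather than on short ones. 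Moreover, Lemma~\ref{lem:distStdPair} with $\eta = \tfrac12$ shows that the renormalized pushforward density $\rho'$ on any u-curve subpiece $\gamma' \subset F(\gamma)$ satisfies $\|\frac{d\log\rho'}{dx}\| \le L^{-1/2}\cdot 2 C_0 L^{1/2} + C_0 L^{1-1} = 3 C_0$, the standard-pair threshold.

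I then subdivide $F_*(\gamma,\rho)$ exactly as in Lemma~\ref{lem:cut-standard} with $\eta = \tfrac12$: the part of $F(\gamma)$ lying in $\Sc_{1/2}$ is allocated to $\Ec$, and the complementary curve $\tilde\gamma \setminus \Sc_{1/2}$ splits into arcs $\check\gamma_j$. By Lemma~\ref{lem:predomHyp}(a), $\Bc_{1/2}$ consists of two intervals of length $O(L^{-1/2})$ about $\tfrac14$ and $\tfrac34$, so each component of $\T^1 \setminus \Bc_{1/2}$ has length $\tfrac12 - O(L^{-1/2}) > a_0$ for $L$ large; hence every $\check\gamma_j$ except the at most two arcs abutting $\partial\tilde I$ spans, in its first coordinate $f_\gamma(x)$, a full such component and so has length $> a_0$. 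Subdividing each $\check\gamma_j$ at integers and projecting to $\T^2$: the unit-length pieces are fully-crossing u-curves (Lemma~\ref{lem:ucurve}) carrying a density of distortion $\le 3 C_0$, hence standard pairs, which I place in $\Ic$; each $\check\gamma_j$ also yields at most two leftover pieces of length $< 1$, which I allocate to $\Ic$ if longer than $a_0$, to $\Jc$ if of length in $[L^{-1/2}, a_0]$ (such a piece avoids $\Sc_{1/2}$ by construction and has distortion $\le 3 C_0 \le 2 C_0 L^{1/2}$, hence is substandard), and to $\Ec$ if shorter than $L^{-1/2}$.

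The remaining task — bounding $\|\mu_\Jc\|$ and $\|\mu_\Ec\|$ — is the main obstacle, because $|\tilde I|$ may be of order $a_0 L$, so there can be correspondingly many small leftover pieces that must be controlled in aggregate rather than individually. The point is that every leftover piece is anchored at an integer, so a $\Jc$-piece lies within $a_0$ of $\Z$ and a short $\Ec$-piece within $L^{-1/2}$ of $\Z$; consequently each unit subinterval $[m,m+1) \subset \tilde I$ meets at most two leftover pieces, of total Lebesgue length $O(a_0)$ (resp.\ $O(L^{-1/2})$), and meets $\tilde\gamma \cap \Sc_{1/2}$ in Lebesgue measure $\Leb(\Bc_{1/2}) = O(L^{-1/2})$. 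Since the renormalized pushforward density has distortion $\le 3 C_0$, on each such unit interval it is comparable (up to a factor $e^{3 C_0}$) to its mean there; summing the resulting bounds over all full unit subintervals of $\tilde I$ and using that the total pushforward mass equals $1$ yields contributions $O(a_0)$ to $\|\mu_\Jc\|$ and $O(L^{-1/2})$ to $\|\mu_\Ec\|$. It remains to handle the at most two arcs abutting $\partial\tilde I$, together with the at most two non-unit subintervals of $\tilde I$ at its ends; here I use the substandard bound directly: $\|\frac{d\log\rho}{dx}\| \le 2 C_0 L^{1/2}$ together with $|I_\gamma| \ge L^{-1/2}$ forces the density at $\partial I_\gamma$ to be $O(L^{1/2})$, whence $(\gamma,\rho)(J) = O(L^{1/2}|J|)$ for any $x$-interval $J$ abutting $\partial I_\gamma$ with $|J| = O(L^{-1/2})$; applied with $|J| = O(a_0 L^{-1/2})$ for the boundary $\Jc$-pieces and $|J| = O(L^{-1})$ for the boundary $\Ec$-pieces, this contributes a further $O(a_0)$ and $O(L^{-1/2})$ respectively. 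Summing all contributions gives $\|\mu_\Jc\| = O(a_0)$ and $\|\mu_\Ec\| = O(L^{-1/2})$.
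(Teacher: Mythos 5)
Your argument reaches the correct bounds, but by a genuinely different route than the paper's, and with a misreading worth flagging. The paper's key simplification is to first subdivide the substandard curve $\gamma$ into sub-curves $\gamma_i$ of length in $[L^{-1/2}, 2L^{-1/2}]$, renormalize, and treat each $(\gamma_i,\rho_i)$ separately: on a piece that short, $\|\tfrac{d\log\rho}{dx}\| \le 2C_0 L^{1/2}$ forces $\rho_i$ to be within an $O(1)$ multiplicative factor of $L^{1/2}$ everywhere, and then the image, cut only at integers, has at most two non-unit leftover arcs whose masses are read off in one line. You skip this reduction and push the whole curve forward; since $|I_\gamma|$ may be as large as $a_0$, the density on $\gamma$ is far from constant, and you compensate with two correct but more delicate observations: (i) the pushed-forward density on $\tilde I$ has distortion $\le 3C_0$, hence is comparable to its mean on each unit subinterval of $\tilde I$, which lets you bound the mass landing near $\Sc_{1/2}$ or near integers one unit interval at a time; and (ii) the substandard distortion bound plus $|I_\gamma|\ge L^{-1/2}$ forces $\rho = O(L^{1/2})$ at the endpoints of $I_\gamma$, which controls the boundary arcs of $\tilde I$. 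Both ideas are sound, so the conclusion stands, but the approach is more work than the paper's renormalization trick. On the misreading: you allocate ``the part of $F(\gamma)$ lying in $\Sc_{1/2}$'' to $\Ec$, whereas Lemma~\ref{lem:cut-standard} allocates $F(\gamma\cap\Sc_{1/2})$ — the image of the part of the \emph{domain} meeting $\Sc_{1/2}$ — which is empty for a substandard pair by condition (iii) of the definition. Cutting the image at $\Sc_{1/2}$ is not needed, and it has a consequence your write-up misses: every resulting arc $\check\gamma_j$ then has length at most $\tfrac12$, so there are no ``unit-length pieces'' to allocate. This happens to be harmless because every interior arc still has length $> \tfrac14 - O(L^{-1/2}) > a_0$ and so is allocated to $\Ic$ anyway, and the only $\Jc$- or $\Ec$-length arcs arise at $\partial\tilde I$ (so the claim that ``a $\Jc$-piece lies within $a_0$ of $\Z$'' is both false and unnecessary in your scheme). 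Omitting the $\Sc_{1/2}$ cut entirely and noting, as the paper does, that near-integer leftover pieces of length $\le a_0 \le 1/8$ automatically avoid $\Sc_{1/2}$ would make your version cleaner.
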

    In particular, if $a_0$ is chosen sufficiently small (independently
    of $L$), we have $\| \mu_\Jc\| \leq 1/2$ when $(\gamma, \rho)$ is
    substandard.

    \begin{proof}
    Without loss of generality, let us assume that $\gamma$ has length
    $\in [L^{-1/2}, 2 L^{- 1/2}]$. If not, then subdivide $\gamma$ into
    pieces $\gamma_i$ with lengths $\in [L^{- 1/2}, 2 L^{- 1/2}]$ and
    consider separately each $(\gamma_i, \rho_i)$, where $\rho_{i}$ is the
    renormalized restriction of the density
    {$\rho_i := \big( (\gamma, \rho)(\gamma_i) \big)^{-1}\cdot
    \rho|_{I_{\gamma_i}}$}. Note that by our reduction,
    $\sup_{x_1, x_2 \in I}|\log \frac{\rho(x_2)}{\rho(x_1)}| \le
    \|\frac{d}{dx} \log \rho\| \, |I| = O(L^{\frac12} L^{-\frac12}) =
    O(1)$. Since $\rho$ is a probability density on
    $|I|\approx L^{-\frac12}$, we have $\rho \approx L^{\frac12}$.

    Observe that $\tilde \gamma = \tilde F(\gamma)$ has length
    larger than $L^{1/2} \cdot L^{- 1/2} = 1$. With
    $\tilde \gamma_n = \tilde \gamma \cap ([n, n + 1) \times \T^1)$ as in
    the proof of Lemma \ref{lem:cut-standard}, allocate all fully-crossing
    $\tilde \gamma_n$ to $\Ic = \Ic_{(\gamma, \rho)}$.  At most two
    $\tilde \gamma_n$ remain, each of length $< 1$. For each,
  we distinguish three cases: we add $\tilde \gamma_n$ to
  \begin{enumerate}[(i)]
  \item
  $\Ic$ if $|I_{\tilde \gamma_n}| > a_0$,
  \item $\Jc$ if $|I_{\tilde \gamma_n}| \in [L^{- 1/2}, a_0]$, or
  \item $\Ec$ if $|I_{\tilde \gamma_n}| < L^{- 1/2}$.
\end{enumerate}
In case (ii), note that $\tilde \gamma_n \cap \Sc_{1/2} =
\emptyset$ automatically, since for all
$L$ sufficiently large, the critical strips comprising
$\Sc_{1/2}$ are a distance $> 1/5$ from $\{ x = 0 \} \times
\T^1$, while $I_{\tilde \gamma_n}$ has the form $[n, n + c)$ or $[n +1- c, n
+ 1)$  for some $c \leq a_0 \leq
1/8$.  In order to estimate the contributions to $\mu_\Jc,
\mu_\Ec$, respectively, note that in case (ii) we have $(\gamma,
\rho)(F^{-1} (\tilde \gamma_n)) = O(|I_{F^{-1} (\tilde \gamma_n)}|\, \|\rho\|) = O(a_0 L^{- 1/2}  L^{1/2}) =
O(a_0)$, while in case (iii) we have $(\gamma, \rho)(F^{-1} (\tilde
\gamma_n)) = O(L^{-1/2} \cdot L^{- 1/2}  L^{1/2}) = O(L^{-1/2})$.

It remains to check distortion. For any
$(\gamma', \rho') \in \Ic \cup \Jc$,
  by Lemma \ref{lem:distStdPair} with $\eta =
  1/2$ and the definition of a substandard pair we have
  \[
  \bigg\| \frac{d \log \rho'}{dx} \bigg\| \le L^{- 1/2} \cdot 2 C_0 L^{1/2} + C_0 \leq 3 C_0 \, . \qedhere
  \]
  \end{proof}

  Finally, we consider fully-crossing standard pairs.
  \begin{lem}\label{lem:cut-full}
  Let $(\gamma, \rho)$ be a standard pair for which $\gamma$ is fully-crossing. Then, $F_*(\gamma, \rho)$ admits a representation of the form
  \[
  F_*(\gamma, \rho) = \mu_\Lc + \mu_\Ic + \mu_\Jc + \mu_\Ec \, ,
\]
where $\|\mu_\Ic\| = O(L^{-1/2})$, $\| \mu_\Jc\| = O(a_0 L^{- 1/2})$ and $\| \mu_\Ec\| = O(L^{- 3/4})$.
\end{lem}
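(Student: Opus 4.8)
The plan is to run the cutting procedure of Lemmata~\ref{lem:cut-standard}--\ref{lem:cut-sub}, now exploiting that a fully-crossing standard pair has a density comparable to the constant one: since $|I_\gamma| = 1$ and $\|d\log\rho/dx\| \le 3C_0$ we have $e^{-3C_0} \le \rho \le e^{3C_0}$, so $(\gamma,\rho)$-mass is comparable to Lebesgue measure on $I_\gamma$, and this is exactly what upgrades the $O(L^{-1/2})$ error of Lemma~\ref{lem:cut-standard} to $O(L^{-3/4})$ here.

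First I would split $\gamma = \gamma_r \cup \gamma_c$ with $\gamma_r = \gamma \setminus \Sc_{1/2}$, $\gamma_c = \gamma \cap \Sc_{1/2}$; by Lemma~\ref{lem:predomHyp}, $\gamma_r$ has at most three components and $\gamma_c$ consists of two arcs, one in each vertical strip of $\Sc_{1/2}$, of $x$-width $\asymp L^{-1/2}$. On each component of $\gamma_r$, $|\dot f_\gamma| \ge L^{1/2}$ and $f_\gamma$ is a monotone $C^2$ diffeomorphism onto its image; proceeding as in Lemma~\ref{lem:cut-standard}, Lemma~\ref{lem:ucurve} with $\eta=1/2$ lets me cut $\tilde F$ of the component into unit-length u-curves — these project to fully-crossing u-curves with $\|d\log\rho'/dx\| \le L^{-1/2}\cdot 3C_0 + C_0 \le 3C_0$ by Lemma~\ref{lem:distStdPair}, hence go to $\Lc$ — plus at most two leftover fractional pieces per component, lying over $\partial\Bc_{1/2}$ (where $|\dot f_\gamma| \ge 2L^{1/2}$) or over the seam $\{x=0\}$ (where $|\dot f_\gamma| \asymp L$). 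A leftover piece of image length $\ell$ then has $(\gamma,\rho)$-mass $O(\ell L^{-1/2})$; I assign it to $\Ic$ if $\ell > a_0$, to $\Jc$ if $\ell \in [L^{-1/2},a_0]$ (after deleting its intersection with $\Sc_{1/2}$, of mass $O(L^{-1})$, into $\Ec$), and to $\Ec$ if $\ell < L^{-1/2}$; in each case the distortion passes to $\le 3C_0 \le 2C_0 L^{1/2}$. The $O(1)$ leftover pieces contribute $O(L^{-1/2})$, $O(a_0 L^{-1/2})$ and $O(L^{-1})$ to $\|\mu_\Ic\|$, $\|\mu_\Jc\|$ and $\|\mu_\Ec\|$.

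The main difficulty is a critical arc $\beta \subset \gamma_c$, say near $x=1/4$. There $f_\gamma$ has a unique nondegenerate critical point $x_c$ with $|\ddot f_\gamma| \asymp L$ throughout $\beta$, so $\{x\in\beta : |\dot f_\gamma(x)| < L^{1/4}\}$ is a single subinterval of $x$-width $\asymp L^{-3/4}$ around $x_c$; I put its image into $\Ec$, which produces the $O(L^{-3/4})$ term. Its complement in $\beta$ is two arcs, one on each side of $x_c$, on which $L^{1/4}\le|\dot f_\gamma|<2L^{1/2}$ and $f_\gamma$ is monotone; Lemma~\ref{lem:ucurve} with $\eta=1/4$ gives $C^2$ graphs with slope $\le 1/10$ and curvature $\le L$ over $X$-intervals of length $O(1)$, and Lemma~\ref{lem:distStdPair} with $\eta=1/4$ bounds the densities by $L^{-1/4}\cdot 3C_0 + C_0 L^{1/2} \le 2C_0 L^{1/2}$. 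Subdividing these, discarding the parts meeting $\Sc_{1/2}$ and the fragments of length $< L^{-1/2}$ (each of $(\gamma,\rho)$-mass $O(L^{-3/4})$, into $\Ec$, since the density there is $O(L^{-1/4})$), and sending the remaining substandard pieces to $\Jc$ completes the decomposition. The step I expect to be most delicate — the main obstacle — is arranging this last subdivision near the quadratic fold so that the total mass sent to $\Jc$ stays $O(a_0 L^{-1/2})$: this requires carefully comparing, for a piece on which $|\dot f_\gamma| \asymp \lambda$, its image length ($\asymp \lambda^2/L$), preimage length ($\asymp 1/\lambda$ per unit image length) and induced distortion ($\asymp L/\lambda^2$), and tracking which pieces are forced out of the standard class. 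The arc near $x = 3/4$ is handled identically.
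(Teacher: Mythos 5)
Your approach is essentially the paper's proof: split $\gamma$ into the regular part $\gamma \setminus \Sc_{1/2}$, the intermediate part $\gamma \cap (\Sc_{1/2} \setminus \Sc_{1/4})$, and the innermost critical region $\gamma \cap \Sc_{1/4}$ (your threshold $\{|\dot f_\gamma| < L^{1/4}\}$ is this set up to constants), send the latter directly to $\Ec$ with mass $O(L^{-3/4})$, and cut the images of the first two into $\Lc, \Ic, \Jc, \Ec$ by length, checking distortion with Lemma~\ref{lem:distStdPair} at $\eta = 1/2$ and $\eta = 1/4$ respectively. The mass estimates you give for the regular part ($O(L^{-1/2})$ to $\Ic$, $O(a_0 L^{-1/2})$ to $\Jc$, $O(L^{-1})$ to $\Ec$) and for the intermediate part ($O(L^{-3/4})$ to $\Ec$) all match.

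Regarding the point you flag as the main obstacle --- getting the intermediate region's $\Jc$-mass down to $O(a_0 L^{-1/2})$ via the $\lambda$-scaling analysis --- you should note that the paper does not actually achieve this and you should not try to. The paper's proof states plainly that the intermediate region contributes $O(L^{-1/2})$ to $\mu_\Jc$, not $O(a_0 L^{-1/2})$, and this is sharp: the total pushforward mass of $\gamma \cap (\Sc_{1/2} \setminus \Sc_{1/4})$ is $\approx L^{-1/2}$, and nearly all of it ends up in substandard pieces. Thus the lemma's stated bound $\|\mu_\Jc\| = O(a_0 L^{-1/2})$ is a slight overclaim; what the proof yields is $\|\mu_\Jc\| = O(L^{-1/2})$. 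This is harmless downstream, since in the recursion of Proposition~\ref{prop:massEst} the $\Jc$-contribution from fully-crossing pairs enters only through the term $O(L^{-1/2}\|\mu_\Lc^k\|)$, and the $a_0$-factor plays no role there; it is only the $a_0$-dependent contraction $\|\mu_\Jc\| \le \tfrac12$ from Lemma~\ref{lem:cut-sub} (substandard pairs) that Remark~\ref{rmk:parameter} relies on. So you can safely drop the delicate fold-matching bookkeeping you anticipated and simply record $O(L^{-1/2})$; everything else in your proposal is correct and aligned with the paper.
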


\begin{proof}
To start, $F(\gamma \cap \Sc_{1/4})$ is allocated to $\Ec$, giving an $O(L^{- 3/4})$ contribution
to the mass of $\mu_\Ec$.

To allocate $F(\gamma \setminus \Sc_{1/2})$: the set $\gamma \setminus \Sc_{1/2}$ has three
connnected components $\check \gamma$, each of which we handle separately. Fixing a $\check \gamma$
and setting $\tilde \gamma = \tilde F(\check \gamma)$, $\tilde \gamma_n = \tilde \gamma \cap ([n, n + 1) \times \T^1)$,
allocate all $\tilde \gamma_n$ of length $1$ to $\Lc$. For the at-most two remaining nonempty
$\tilde \gamma_n$, allocate to $\Ic, \Jc, \Ec$ according to cases (i) -- (iii) in the proof of Lemma \ref{lem:cut-sub}.
As in Lemma \ref{lem:cut-sub}, in case (ii) we automatically have $\tilde \gamma_n \cap \Sc_{1/2} = \emptyset$. This step contributes
$O(L^{- 1/2})$ mass to $\Ic$; $O(a_0 L^{- 1/2})$-mass to $\Jc$; and $O(L^{-1})$ mass to $\Ec$.
Distortion for measure pairs in $\Lc \cup \Ic \cup \Jc$
allocated so far can be checked using Lemma \ref{lem:distStdPair} with $\eta = 1/2$.

\smallskip

For $F(\gamma \cap (\Sc_{1/2} \setminus \Sc_{1/4}))$, we consider each of the four connected
components $\check \gamma$ of $\gamma \cap (\Sc_{1/2} \setminus \Sc_{1/4})$ separately.
To start, observe that the length of $\tilde \gamma = \tilde F(\check \gamma)$ can be estimated
\[
|I_{\tilde \gamma}| \approx \int_{L^{- 3/4}}^{L^{- 1/2}} L \cdot z \, dz \approx 1 \, .
\]
In particular, $\tilde \gamma \cap \Sc_{1/2}$ has an $O(1)$ number of
connected components.  We allocate each to $\Ec$, contributing
$O(L^{- 1/2} \cdot L^{- 1/4}) = O(L^{- 3/4})$ mass to $\mu_\Ec$.  For
each connected component $\zeta$ of $\tilde \gamma \setminus \Sc_{1/2}$,
allocate $\zeta$ to
\begin{itemize}
	\item[(a)] $\Jc$ if $\zeta$ has length $\in [L^{- 1/2}, a_0]$ or
	\item[(b)] $\Ec$ if $\zeta$ has length $< L^{- 1/2}$.
\end{itemize}
If (c) $\zeta$ has length $> a_0$, then subdivide
$\zeta$ into pieces of length $[a_0/2, a_0]$ and allocate each to $\Jc$.
In cases (a), (c), the contribution to $\mu_\Jc$ is $O(L^{- 1/2})$, while in
case (b) the contribution to $\mu_\Ec$ is $O(L^{- 3/4})$.


To check distortion: for any $(\gamma', \rho') \in \Jc$ with $F^{-1}(\gamma') \subset \gamma \cap (\Sc_{1/2} \setminus \Sc_{1/4})$, we have from Lemma \ref{lem:distStdPair} with $\eta = 1/4$ that
\begin{align*}
\bigg\| \frac{\log d \rho'}{dx}\bigg\| &\le L^{- 1/4} \cdot 3 C_0 + C_0 L^{1/2} \leq 2 C_0 L^{1/2} \, . \qedhere
\end{align*}
\end{proof}

\medskip

\begin{rmk}\label{rmk:parameter}
From this point on, we fix $a_0 \in (0,1/8]$ sufficiently small so
that in Lemmata~\ref{lem:cut-sub} and~\ref{lem:cut-full}, we have
$\|\mu_{\Jc}\| \leq 1/2$. We now treat $a_0$ as a constant parameter
and hereafter omit it from our $O(\cdots)$ estimates.
\end{rmk}

\subsection{Iterated standard pairs}\label{sec:iter}


Fix a fully-crossing standard pair $(\gamma, \rho)$. Below, for each
$n \geq 1$ we define a decomposition
\[
F^n_*(\gamma, \rho) = \mu_{\Lc^n_{(\gamma, \rho)}} +
\mu_{\Ic^n_{(\gamma, \rho)}} + \mu_{\Jc^n_{(\gamma, \rho)}} +
\mu_{\Ec^n_{(\gamma, \rho)}} \, ,
\]
where, as in Section 3.1, each of
$\mu^n_{\Lc} = \mu_{\Lc^n_{(\gamma, \rho)}}, \mu^n_{\Ic} =
\mu_{\Ic^n_{(\gamma, \rho)}}, \mu^n_{\Jc} = \mu_{\Jc^n_{(\gamma,
    \rho)}}$ is a weighted sum of measure pairs of the appropriate
regularity (respectively, fully-crossing, standard, and substandard),
while $\mu_{\Ec^n_{(\gamma, \rho)}} = \mu^n_\Ec$ is a remainder we do
not otherwise control.  We write
$\Lc^n = \Lc^n_{(\gamma, \rho)}, \Ic^n = \Ic^n_{(\gamma, \rho)}, \Jc^n =
\Jc^n_{(\gamma, \rho)}$ for the corresponding classes of, respectively,
fully-crossing, standard and substandard measure pairs, and
$\Ec^n = \Ec^n_{(\gamma, \rho)}$ for the corresponding remainder set.

\subsubsection*{(A) Constructing $\Lc^n, \Ic^n, \Jc^n, \Ec^n$}

To start, we set $\Lc^0 = \{\gamma\}$, $\Ic^0, \Jc^0, \Ec^0 = \{\emptyset\}$. Given
$k \geq 1$, the collections $\Lc^k, \Ic^k, \Jc^k, \Ec^k$, and the measures
$\mu^k_\Lc, \mu^k_\Ic, \mu^k_\Jc, \mu^k_\Ec$,
we define $\Lc^{k+1}, \Ic^{k+1}, \Jc^{k+1}, \Ec^{k+1}$ as follows. Set
\[
\Lc^{k+1} = \bigcup_{(\gamma_k, \rho_k) \in \Lc^k \cup \Ic^k \cup \Jc^k} \Lc_{(\gamma_k, \rho_k)} \, ,
\quad  \quad \mu_\Lc^{k+1}
= \sum_{(\gamma_k, \rho_k) \in \Lc^k \cup \Ic^k \cup \Jc^k} c_{\gamma_k}^k \mu_{\Lc_{(\gamma_k, \rho_k)}} \, ,
\]
where $c^k_{\gamma_k} := F^k_*(\gamma, \rho)(\gamma_k)$. Here, for
measure pairs $(\gamma_k, \rho_k)$, the collections
$\Lc_{(\gamma_k, \rho_k)}$, $\Ic_{(\gamma_k, \rho_k)}$, $\Jc_{(\gamma_k,
  \rho_k)}$ are as in Lemmata~\ref{lem:cut-standard}, \ref{lem:cut-sub},
\ref{lem:cut-full}.  The
$\Ic^{k+1}, \Jc^{k+1}, \mu^{k+1}_\Ic, \mu^{k+1}_\Jc$ are defined
analogously.  Finally, we define
\[
\Ec^{k+1} = F(\Ec^k) \cup \bigcup_{(\gamma_k, \rho_k) \in \Lc^k \cup \Ic^k \cup \Jc^k} \Ec_{(\gamma_k, \rho_k)} \, ,
\quad \quad \mu^{k+1}_\Ec = \sum_{(\gamma_k, \rho_k) \in \Lc^k \cup \Ic^k \cup \Jc^k} c_{\gamma_k}^k \mu_{\Ec_{(\gamma_k, \rho_k)}} \, .
\]
This completes the construction.

\subsubsection*{(B) Estimating mass contributions}

Let us now estimate the relative
sizes of the $\mu^n_\Lc, \mu^n_\Ic, \mu^n_\Jc, \mu^n_\Ec$.

\begin{prop}\label{prop:massEst}
Let $(\gamma, \rho)$ be a fully-crossing standard pair, $n \geq 1$. Then,
\[
F^n_*(\gamma, \rho) =
\mu^n_\Lc +  \mu^n_\Ic +  \mu^n_\Jc + \mu^n_\Ec \, ,
\]
where $\| \mu^n_\Ic \| = \| \mu^n_\Jc \| = O (L^{- 1/2})$ and $\| \mu^n_\Ec\| = O (n L^{- 3/4})$.
\end{prop}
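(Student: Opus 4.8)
The plan is to induct on $n$, using the one-step decompositions in Lemmata \ref{lem:cut-standard}, \ref{lem:cut-sub}, \ref{lem:cut-full} together with the observation that the mass contributions decompose additively across the one-step images. Write $\ell_n = \|\mu^n_\Lc\|$, $i_n = \|\mu^n_\Ic\|$, $j_n = \|\mu^n_\Jc\|$, $e_n = \|\mu^n_\Ec\|$, so that $\ell_n + i_n + j_n + e_n = 1$ for all $n$. From the construction in part (A), passing from level $n$ to level $n+1$ amounts to: applying Lemma \ref{lem:cut-full} to each fully-crossing pair (total mass $\ell_n$), Lemma \ref{lem:cut-standard} to each standard pair (total mass $i_n$), Lemma \ref{lem:cut-sub} to each substandard pair (total mass $j_n$), and pushing $\mu^n_\Ec$ forward (total mass $e_n$) entirely into $\mu^{n+1}_\Ec$. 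Collecting the mass bounds from those three lemmas (and using Remark \ref{rmk:parameter}), one obtains the recursive inequalities
\begin{align*}
i_{n+1} &\leq O(L^{-1/2})\,\ell_n + j_n \,,\\
j_{n+1} &\leq O(L^{-1/2})\,\ell_n + \tfrac12\, j_n \,,\\
e_{n+1} &\leq e_n + O(L^{-3/4})\,\ell_n + O(L^{-1/2})\,i_n + O(L^{-1/2})\,j_n \,.
\end{align*}
(Here the standard pairs from Lemma \ref{lem:cut-standard} produce only $\Lc$- and $\Ec$-mass, so they contribute $O(L^{-1/2})\,i_n$ to $e_{n+1}$ and nothing to $i_{n+1}, j_{n+1}$; the substandard pairs contribute mass $O(1) \leq \tfrac12$ back to $\Jc$, plus $O(L^{-1/2})$ to $\Ec$; the fully-crossing pairs contribute as in Lemma \ref{lem:cut-full}.)

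The second inequality, together with $j_0 = 0$ and $\ell_n \leq 1$, gives by a geometric-series argument $j_n \leq O(L^{-1/2}) \cdot \sum_{k \geq 0} 2^{-k} = O(L^{-1/2})$ uniformly in $n$. Feeding this into the first inequality gives $i_{n+1} \leq O(L^{-1/2})$ uniformly in $n$ as well. Finally, summing the third inequality from $0$ to $n-1$ and using $\ell_k, i_k, j_k \leq 1$ (or the sharper bounds just obtained) yields
\[
e_n \leq e_0 + \sum_{k=0}^{n-1} \Big( O(L^{-3/4}) + O(L^{-1/2}) j_k + O(L^{-1/2}) i_k \Big) = O(n L^{-3/4}) + O(n L^{-1}) = O(n L^{-3/4}) \,,
\]
since $e_0 = 0$ and $j_k, i_k = O(L^{-1/2})$ makes the middle terms $O(n L^{-1})$, which is absorbed. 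This gives all three claimed bounds.

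The one genuinely delicate point — and the step I expect to require the most care — is verifying that the \emph{regularity} classes are preserved under the construction in part (A), i.e. that every pair placed into $\Lc^{n+1}, \Ic^{n+1}, \Jc^{n+1}$ really is, respectively, fully-crossing standard / standard / substandard, and in particular that the distortion bound \eqref{eq:dist} is not degraded over many iterations. For the pairs coming out of $\Lc_{(\gamma_k,\rho_k)}$ and $\Ic_{(\gamma_k,\rho_k)}$ this follows from Lemma \ref{lem:distStdPair} with $\eta = 1/2$, which is \emph{contracting} on the distortion constant ($L^{-1/2}$ factor plus the fixed additive $C_0$), so iterating stays below $3C_0$. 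For the pairs created inside the critical strips in Lemma \ref{lem:cut-full}, the output distortion is $O(L^{1/2})$, placing them in $\Jc$; one must check that when such a substandard pair is later pushed forward by Lemma \ref{lem:cut-sub}, its image pieces land back in $\Ic \cup \Jc$ with distortion $\leq 3C_0$ (resp. $\leq 2C_0 L^{1/2}$) — which is exactly the computation at the end of the proof of Lemma \ref{lem:cut-sub}. So no new regularity checks are needed beyond invoking those lemmas; the bookkeeping is simply in confirming that the three one-step lemmas cover all the cases that arise, which they do because $\Lc^k \cup \Ic^k \cup \Jc^k$ consists only of fully-crossing standard, standard, and substandard pairs by the inductive hypothesis.
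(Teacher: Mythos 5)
Your proof is correct and follows exactly the same approach as the paper: the paper records the same four recursive mass inequalities derived from Lemmata \ref{lem:cut-standard}, \ref{lem:cut-sub}, \ref{lem:cut-full} and then states that the result ``follows by an induction argument'' from the initial state $\|\mu^0_\Lc\| = 1$, $\|\mu^0_\Ic\| = \|\mu^0_\Jc\| = \|\mu^0_\Ec\| = 0$, which is precisely the geometric-series estimate for $j_n$, the feed-through to $i_n$, and the telescoping sum for $e_n$ that you carry out. Your remark on the preservation of the regularity classes under iteration is also accurate and is exactly where the distortion computations in those three lemmata are doing the work.
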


\begin{proof}
From Lemmata~\ref{lem:cut-standard}, \ref{lem:cut-sub}, \ref{lem:cut-full}, we obtain
\begin{align*}
\| \mu_\Ec^{k + 1} \|  &= \| \mu_\Ec^k \| + O\bigg(L^{- 1/2} \cdot \| \mu_\Jc^k \| + L^{- 1/2} \cdot  \| \mu_\Ic^k \| +
L^{- 3/4} \| \mu_\Lc^k \| \bigg) \\
\| \mu_\Jc^{k + 1} \|  &\leq \frac12 \| \mu_\Jc^k \| +  O(L^{- 1/2} \| \mu_\Lc^k \|) \\
\| \mu_\Ic^{k + 1} \|  &= O\big( \| \mu_\Jc^k \|  + L^{- 1/2} \| \mu_\Lc^k \| \big) \\
\| \mu_\Lc^{k + 1} \|  &= \big(1- O (L^{- 1/2}) \big) \| \mu_\Ic^k \| + \big(1- O(L^{- 1/2})\big) \| \mu_\Lc^k \|
\end{align*}
Proposition \ref{prop:massEst} follows by an induction argument, using
the initial state $\|\mu_\Lc^0\| = 1, \| \mu_\Ic^0\| = \| \mu_\Jc^0\| = \| \mu_\Ec^0\| = 0$.
\end{proof}



%
%

If, at time $n$, we discard the curves in $\Ic^n, \Jc^n$, we obtain the following corollary.
\begin{cor}\label{lem:iterateStdPair}
Let $(\gamma, \rho)$ be a fully-crossing standard pair. For any $n \geq 1$, the pushed-forward standard pair $F^n_*(\gamma, \rho)$ admits a representation of the form
\[
F^n_*(\gamma, \rho) = \sum_{(\gamma_n, \rho_n) \in \Lc^n_{(\gamma, \rho)}} c_{\gamma_n} (\gamma_n, \rho_n) + \hat \mu_\Ec^n
\]
where each $(\gamma_n, \rho_n)$ is a fully-crossing standard pair,
 the coefficients $\{c_{\gamma_n} : (\gamma_n, \rho_n) \in \Lc^n_{(\gamma, \rho)}\}$ are nonnegative,
and $\|\hat \mu^n_\Ec\| = O(L^{- 1/2} + n L^{- 3/4})$.
\end{cor}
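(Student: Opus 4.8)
The plan is to read the corollary off directly from Proposition~\ref{prop:massEst} together with the recursive construction of $\Lc^n, \Ic^n, \Jc^n, \Ec^n$ in Section~\ref{sec:iter}. First I would apply Proposition~\ref{prop:massEst} to write $F^n_*(\gamma,\rho) = \mu^n_\Lc + \mu^n_\Ic + \mu^n_\Jc + \mu^n_\Ec$ with $\|\mu^n_\Ic\| = \|\mu^n_\Jc\| = O(L^{-1/2})$ and $\|\mu^n_\Ec\| = O(n L^{-3/4})$, and then simply set $\hat\mu^n_\Ec := \mu^n_\Ic + \mu^n_\Jc + \mu^n_\Ec$. Since all three summands are nonnegative measures, $\|\hat\mu^n_\Ec\| = \|\mu^n_\Ic\| + \|\mu^n_\Jc\| + \|\mu^n_\Ec\| = O(L^{-1/2}) + O(n L^{-3/4}) = O(L^{-1/2} + n L^{-3/4})$, which is the claimed bound.

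Second, I would unwind the definition of $\mu^n_\Lc$ to exhibit it as a nonnegatively weighted sum of fully-crossing standard pairs. By the construction in Section~\ref{sec:iter}, $\mu^{k+1}_\Lc = \sum_{(\gamma_k,\rho_k) \in \Lc^k \cup \Ic^k \cup \Jc^k} c^k_{\gamma_k}\,\mu_{\Lc_{(\gamma_k,\rho_k)}}$ with $c^k_{\gamma_k} = F^k_*(\gamma,\rho)(\gamma_k) \ge 0$, and by Lemmata~\ref{lem:cut-standard},~\ref{lem:cut-sub},~\ref{lem:cut-full} each $\mu_{\Lc_{(\gamma_k,\rho_k)}}$ is in turn a nonnegatively weighted sum of the fully-crossing standard pairs in the (finite) collection $\Lc_{(\gamma_k,\rho_k)}$. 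Iterating from $\Lc^0 = \{\gamma\}$ therefore expresses $\mu^n_\Lc$ as $\sum_{(\gamma_n,\rho_n)\in\Lc^n} c_{\gamma_n}(\gamma_n,\rho_n)$, where each coefficient $c_{\gamma_n}$ is a finite product of the (nonnegative) masses occurring along the branch of the construction that produced $(\gamma_n,\rho_n)$, hence nonnegative; one may also identify $c_{\gamma_n}$ intrinsically with the mass $\mu^n_\Lc$ assigns to that piece. I would then check that every member of $\Lc^n$ is genuinely a fully-crossing standard pair: it is fully-crossing because the curves allocated to $\Lc$ in Lemmata~\ref{lem:cut-standard} and~\ref{lem:cut-full} are exactly the length-one pieces $\tilde\gamma_n$, projected to $\T^2$; and the distortion bound $\|\tfrac{d\log\rho'}{dx}\| \le 3C_0$ is preserved at each step, since Lemma~\ref{lem:distStdPair} with $\eta = \tfrac12$ gives $\|\tfrac{d\log\rho'}{dx}\| \le L^{-1/2}\|\tfrac{d\log\rho}{dx}\| + C_0$, which is $\le 3C_0$ for $L$ large whether the preimage pair is standard ($\|\tfrac{d\log\rho}{dx}\| \le 3C_0$) or substandard ($\|\tfrac{d\log\rho}{dx}\| \le 2C_0 L^{1/2}$).

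There is essentially no analytic difficulty here; the corollary is a repackaging of Proposition~\ref{prop:massEst}, obtained by discarding $\Ic^n$ and $\Jc^n$ into the error term. The only point warranting a line of care is the verification just sketched that the pieces fed into $\Lc^n$ remain fully-crossing standard pairs under iteration rather than degrading, which reduces to the one-line distortion estimate above together with the trivial observation that a length-one interval satisfies $|I_\gamma| = 1 > a_0$.
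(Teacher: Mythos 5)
Your proposal is correct and takes essentially the same route as the paper, which obtains the corollary by the one-line remark ``if, at time $n$, we discard the curves in $\Ic^n, \Jc^n$, we obtain the following corollary'' — i.e.\ exactly your choice $\hat\mu^n_\Ec := \mu^n_\Ic + \mu^n_\Jc + \mu^n_\Ec$, with the mass bound read off from Proposition~\ref{prop:massEst}. Your verification that the members of $\Lc^n$ are genuinely fully-crossing standard pairs with nonnegative weights is implicit in the paper (it is established, step by step, in the proofs of Lemmata~\ref{lem:cut-standard}--\ref{lem:cut-full}), so it is a reasonable thing to make explicit; the only small imprecision is that a substandard pair is never the \emph{immediate} preimage of an $\Lc$ piece, since Lemma~\ref{lem:cut-sub} sends fully-crossing images of substandard curves to $\Ic$ rather than $\Lc$, so the ``substandard'' branch of your distortion check is relevant only to the $\Ic$/$\Jc$ bookkeeping, not to $\Lc^n$ directly.
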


\subsection{Correlation control for $x$-dependent observables}

We now present some consequences of the arguments in Sections 3.1, 3.2
for correlation decay.  Let $\phi : \T^2 \to \R$ be a $C^1$,
$x$-dependent observable.

\subsubsection{Correlation control for standard pairs}

\begin{prop}[Equidistribution]\label{prop:docStdPair}
Let $(\gamma, \rho)$ be a fully-crossing standard pair
and assume $\int_{\T^1} \phi \, dx= 0$. Then, for all $n \geq 1$ we have that
\begin{align*}
\int \phi \circ F^n \, d (\gamma, \rho)= O\left ( \| \phi \|_{C^0} \cdot \big( (n-1) L^{- \frac34} + L^{- \frac12} \big) \right).
\end{align*}
\end{prop}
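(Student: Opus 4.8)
The plan is to reduce the $n$-step estimate to a single-iterate equidistribution bound for fully-crossing standard pairs, and then to establish that bound by an explicit change of variables exploiting the strong horizontal expansion of $f_\gamma$ together with the hypothesis $\int_{\T^1}\phi\,dx=0$.

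\emph{Step 1 (reduction to one iterate).} I would apply Corollary \ref{lem:iterateStdPair} at time $n-1$ (interpreting the $n-1=0$ case by $F^0_*(\gamma,\rho)=(\gamma,\rho)\in\Lc^0$, $\hat\mu^0_\Ec=0$), writing
\[
F^{n-1}_*(\gamma,\rho)=\sum_{(\gamma',\rho')\in\Lc^{n-1}_{(\gamma,\rho)}}c_{\gamma'}\,(\gamma',\rho')+\hat\mu^{n-1}_\Ec,\qquad \|\hat\mu^{n-1}_\Ec\|=O\big(L^{-1/2}+(n-1)L^{-3/4}\big),
\]
where each $(\gamma',\rho')$ is a fully-crossing standard pair, $c_{\gamma'}\ge0$, and $\sum_{\gamma'}c_{\gamma'}\le1$. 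Integrating $\phi\circ F$ against this identity and bounding the $\hat\mu^{n-1}_\Ec$-contribution by $\|\phi\|_{C^0}\|\hat\mu^{n-1}_\Ec\|$, it remains to prove that every fully-crossing standard pair $(\gamma',\rho')$ satisfies
\[
\Big|\int\phi\circ F\,d(\gamma',\rho')\Big|=O\big(\|\phi\|_{C^0}L^{-1/2}\big);
\]
summing this estimate against the $c_{\gamma'}$ (whose total is $\le1$) then yields the Proposition.

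\emph{Step 2 (the one-iterate bound).} Since $\gamma'$ is fully-crossing and $\phi$ is $x$-dependent, with $f_{\gamma'}(x):=f(x)-h_{\gamma'}(x)$ as in the proof of Lemma \ref{lem:ucurve} one has $\int\phi\circ F\,d(\gamma',\rho')=\int_0^1\phi\big(f_{\gamma'}(x)\big)\rho'(x)\,dx$. The distortion bound \eqref{eq:dist} together with $\int_0^1\rho'=1$ force $\|\rho'\|_{C^0},\|\dot\rho'\|_{C^0}=O(1)$. I would split $[0,1)=\Bc_{1/2}\sqcup\big([0,1)\setminus\Bc_{1/2}\big)$. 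On $\Bc_{1/2}$, which has Lebesgue measure $O(L^{-1/2})$ by Lemma \ref{lem:predomHyp}(a), the integral is $O(\|\phi\|_{C^0}L^{-1/2})$. The set $[0,1)\setminus\Bc_{1/2}$ is a union of at most two intervals $J$ on each of which $f_{\gamma'}$ is strictly monotone (its critical points lie in $\Bc_{1/2}$) with $|\dot f_{\gamma'}|\ge L^{1/2}$ there, by Lemma \ref{lem:predomHyp} and $\|\dot h_{\gamma'}\|_{C^0}\le\tfrac1{10}$. Changing variables $y=f_{\gamma'}(x)$ on such a $J$ gives $\int_{f_{\gamma'}(J)}\phi(y)\,g(y)\,dy$ with $g:=\big(\rho'/|\dot f_{\gamma'}|\big)\circ f_{\gamma'}^{-1}$, a $C^1$ function on the interval $f_{\gamma'}(J)\subset\R$ with $\|g\|_{C^0}=O(L^{-1/2})$. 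Cutting $f_{\gamma'}(J)$ into unit intervals $[m,m+1]$ plus at most two end pieces of length $<1$: each end piece contributes $O(\|\phi\|_{C^0}L^{-1/2})$, while on a unit interval, using $\int_0^1\phi=0$,
\[
\int_m^{m+1}\phi(y)\,g(y)\,dy=\int_0^1\phi(y)\big(g(y+m)-g(m)\big)\,dy,
\]
which has modulus $\le\|\phi\|_{C^0}\operatorname{Var}_{[m,m+1]}g$, where $\operatorname{Var}$ denotes total variation; summing over $m$ leaves $\le\|\phi\|_{C^0}\operatorname{Var}_{f_{\gamma'}(J)}g$.

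\emph{Step 3 (the variation estimate --- the crux).} It remains to show $\operatorname{Var}_{f_{\gamma'}(J)}g=\int_J\big|\tfrac{d}{dx}\big(\rho'|\dot f_{\gamma'}|^{-1}\big)\big|\,dx=O(L^{-1/2})$. Expanding $\tfrac{d}{dx}\big(\rho'|\dot f_{\gamma'}|^{-1}\big)=\dot\rho'\,|\dot f_{\gamma'}|^{-1}+\rho'\,\tfrac{d}{dx}|\dot f_{\gamma'}|^{-1}$, the first term integrates to $\le\|\dot\rho'\|_{C^0}\int_J|\dot f_{\gamma'}|^{-1}=O(L^{-1/2})$, and the second to $\le\|\rho'\|_{C^0}\operatorname{Var}_J\big(|\dot f_{\gamma'}|^{-1}\big)$. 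Here $|\dot f_{\gamma'}|^{-1}$ is bounded by $L^{-1/2}$ on $J$, while $\dot f_{\gamma'}$ has constant sign on each component of $J$ and the critical points of $|\dot f_{\gamma'}|^{-1}$ are the zeros of $\ddot f_{\gamma'}=-4\pi^2 L\sin(2\pi x)+\ddot h_{\gamma'}$, of which there are only boundedly many in $J$ (near $x=0$ and $x=\tfrac12$); hence $|\dot f_{\gamma'}|^{-1}$ is monotone on boundedly many subintervals and $\operatorname{Var}_J\big(|\dot f_{\gamma'}|^{-1}\big)=O(L^{-1/2})$. This last step is the one I expect to demand the most care: one must track precisely the sign and monotonicity of $\dot f_{\gamma'}$ near the boundary of $\Sc_{1/2}$ --- where $|\dot f_{\gamma'}|$ is as small as $\approx L^{1/2}$ and $\ddot f_{\gamma'}/(\dot f_{\gamma'})^2$ threatens to be large --- so as to obtain a genuinely $O(L^{-1/2})$, rather than merely $O(1)$, total variation. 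Assembling Steps 1--3 yields the Proposition.
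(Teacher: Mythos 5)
Your Step 1 coincides with the paper's: apply Corollary \ref{lem:iterateStdPair} at time $n-1$, bound the $\hat\mu^{n-1}_\Ec$ contribution crudely, and reduce to a one-iterate estimate for fully-crossing standard pairs. What differs is the proof of that one-iterate estimate (the paper's Lemma \ref{lem:oneStepDoC}). The paper reapplies the cut-standard machinery of Lemma \ref{lem:cut-standard} to decompose $F_*(\gamma',\rho')$ into fully-crossing standard pairs $(\check\gamma,\check\rho)$, bounds each $\int\phi\,d(\check\gamma,\check\rho)=\int\phi(\check\rho-1)$ by estimating $|\check\rho-1|$ from the explicit distortion formula \eqref{eq:distortion11}, and sums the contributions with a distribution-function argument. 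Your route is more bare-handed: change variables $y=f_{\gamma'}(x)$, fold the long image interval into unit blocks, exploit $\int_0^1\phi=0$ to bound each block by the total variation of $g=(\rho'/|\dot f_{\gamma'}|)\circ f_{\gamma'}^{-1}$, and sum. Both ultimately control quantities governed by $\int_J |\ddot f_{\gamma'}|/\dot f_{\gamma'}^2\,dx$, but yours avoids re-entering the standard-pair decomposition and is arguably more transparent. (A minor slip: in the parametrization $[0,1)$ the complement of $\Bc_{1/2}$ is three intervals, not two; this is immaterial.)

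There is one genuine gap, which you partly flagged yourself. In Step 3 you assert that $\ddot f_{\gamma'}=-4\pi^2L\sin(2\pi x)+\ddot h_{\gamma'}$ has only boundedly many zeros in $J$ and conclude $\operatorname{Var}_J(|\dot f_{\gamma'}|^{-1})=O(L^{-1/2})$ by ``boundedly many monotone pieces times sup''. But $\ddot h_{\gamma'}$ is merely continuous with $\|\ddot h_{\gamma'}\|_{C^0}\le L$; on the fixed, $L$-independent neighborhoods of $x=0,\tfrac12$ where $4\pi^2L|\sin(2\pi x)|\le L$, the equation $\ddot f_{\gamma'}=0$ can have arbitrarily many solutions, so the monotone-piece count is not controlled. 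The conclusion nonetheless holds, and is most cleanly obtained by computing the variation as an integral:
\begin{align*}
\operatorname{Var}_J\big(|\dot f_{\gamma'}|^{-1}\big)
=\int_J\frac{|\ddot f_{\gamma'}|}{\dot f_{\gamma'}^2}\,dx
\lesssim L\int_J\frac{dx}{L^2\operatorname{dist}\!\big(x,\{1/4,3/4\}\big)^2}
\lesssim L^{-1}\int_{cL^{-1/2}}^{1/2}\frac{dt}{t^2}=O\big(L^{-1/2}\big),
\end{align*}
using $|\ddot f_{\gamma'}|=O(L)$ together with $|\dot f_{\gamma'}(x)|\gtrsim L\operatorname{dist}(x,\{1/4,3/4\})$ on $J$ and $\operatorname{dist}(x,\{1/4,3/4\})\gtrsim L^{-1/2}$ on $J$. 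With this substitution in Step 3 the argument is complete.
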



First, we prove a preliminary lemma.

\begin{lem}[One-step equidistribution]\label{lem:oneStepDoC}
Let $(\gamma, \rho)$ be a fully crossing standard pair, then
\[
\int \phi \circ F d(\gamma, \rho) = O(\| \phi \|_{C^0} L^{-\frac12}).
\]
\end{lem}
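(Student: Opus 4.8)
The plan is to write
$\int \phi \circ F \, d(\gamma,\rho) = \int_0^1 \phi\big(f_\gamma(x)\big)\,\rho(x)\,dx$, where $f_\gamma = f - h_\gamma$ is as in the proof of Lemma~\ref{lem:ucurve} and $\phi$ is viewed as a $1$-periodic function on $\R$; recall that throughout this subsection $\int_{\T^1}\phi\,dx = 0$, which is essential. First I would record two elementary facts: from \eqref{eq:dist} together with $\int_0^1 \rho\,dx = 1$ and $|I_\gamma| = 1$ one gets $\rho \approx 1$ and $\|\dot\rho\|_{C^0} = O(1)$; and, as in the proof of Lemma~\ref{lem:ucurve}, $|\dot f_\gamma| \ge L^{1/2}$ on $\T^1\setminus\Bc_{1/2}$ while $|\ddot f_\gamma| = O(L)$ everywhere. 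Then split $\T^1 = \Bc_{1/2} \cup (\T^1\setminus\Bc_{1/2})$. On $\Bc_{1/2}$, bound crudely $\big|\int_{\Bc_{1/2}} \phi(f_\gamma)\,\rho\,dx\big| \le \|\phi\|_{C^0}\,\|\rho\|_{C^0}\,\Leb(\Bc_{1/2}) = O\big(\|\phi\|_{C^0} L^{-1/2}\big)$ by Lemma~\ref{lem:predomHyp}(a).

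On the complement, $\dot f_\gamma$ does not vanish, so $\T^1\setminus\Bc_{1/2}$ decomposes into $O(1)$ intervals $J$, on each of which $f_\gamma$ is a $C^2$ diffeomorphism onto an interval $\tilde J = f_\gamma(J)\subset\R$ (of length possibly as large as $O(L)$). Changing variables $u = f_\gamma(x)$ gives $\int_J \phi(f_\gamma)\,\rho\,dx = \int_{\tilde J} \phi(u)\,w(u)\,du$, where $w := (\rho/|\dot f_\gamma|)\circ f_\gamma^{-1}$ is the pushed-forward density, with $\int_{\tilde J} w\,du = \int_J \rho\,dx \le 1$. The key point is that $w$ has small total variation on $\tilde J$: from \eqref{eq:distortion11}, $\frac{d\log w}{du} = \big(\frac{1}{\dot f_\gamma}\frac{d\log\rho}{dx} - \frac{\ddot f_\gamma}{(\dot f_\gamma)^2}\big)\circ f_\gamma^{-1}$, so changing variables back, $\int_{\tilde J}\big|\frac{dw}{du}\big|\,du = \int_J \rho\,\big|\frac{1}{\dot f_\gamma}\frac{d\log\rho}{dx} - \frac{\ddot f_\gamma}{(\dot f_\gamma)^2}\big|\,dx = O(L^{-1/2}) + O\big(\int_J \frac{L}{(\dot f_\gamma)^2}\,dx\big)$, using $\rho\approx 1$, $\|d\log\rho/dx\|\le 3C_0$, and $|\dot f_\gamma|\ge L^{1/2}$ on $J$. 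The remaining integral $\int_J L\,(\dot f_\gamma)^{-2}\,dx$ is also $O(L^{-1/2})$: on $J$ one has not merely $|\dot f_\gamma|\ge L^{1/2}$ but in fact $|\dot f_\gamma(x)|\ge c\,L\,\dist(x,\{1/4,3/4\})$ for a constant $c>0$ near the critical points, so the integral is dominated by $\int_{c' L^{-1/2}}^{1}(L t^2)^{-1}\,dt = O(L^{-1/2})$ (and is even smaller away from $1/4,3/4$, where $|\dot f_\gamma|\approx L$).

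Finally, to exploit $\int_0^1 \phi = 0$, write $\tilde J = [p,q]$ and split it into the integer-endpoint unit subintervals $[m,m+1]\subset[p,q]$ together with the two fractional end pieces. On each unit subinterval, $\int_m^{m+1}\phi(u)\,w(u)\,du = \int_m^{m+1}\phi(u)\,\big(w(u)-w(m)\big)\,du$, hence $\big|\int_m^{m+1}\phi w\,du\big|\le \|\phi\|_{C^0}\,\operatorname{Var}_{[m,m+1]}(w)$, and the sum telescopes to $\|\phi\|_{C^0}\,\operatorname{Var}_{\tilde J}(w) = O(\|\phi\|_{C^0} L^{-1/2})$. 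Each end piece has length $\le 1$, so its $w$-mass equals $\int_{\rm end} w\,du \le \|\rho\|_{C^0}\cdot(\text{$x$-length of its }f_\gamma\text{-preimage}) \le O(1)\cdot L^{-1/2}$, because $|\dot f_\gamma|\ge L^{1/2}$ on $J$; thus its contribution is also $O(\|\phi\|_{C^0} L^{-1/2})$. Summing over the $O(1)$ intervals $J$ and adding the $\Bc_{1/2}$ term gives $\int\phi\circ F\,d(\gamma,\rho) = O(\|\phi\|_{C^0} L^{-1/2})$. I expect the main obstacle to be the estimate $\int_J L\,(\dot f_\gamma)^{-2}\,dx = O(L^{-1/2})$ — equivalently, controlling the variation of the pushed-forward density near $x = 1/4, 3/4$ — since everything else is either bookkeeping or a direct consequence of the distortion and cone estimates already established; it is also worth flagging that the hypothesis $\int_{\T^1}\phi\,dx = 0$ enters crucially in the telescoping step, as otherwise the integral would instead be $\approx\int_{\T^1}\phi\,dx$.
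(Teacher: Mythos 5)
Your proof is correct and follows essentially the same route as the paper: discard the critical set $\Bc_{1/2}$ (costing $O(\|\phi\|_{C^0}L^{-1/2})$), use that $\int_{\T^1}\phi\,dx=0$ kills the leading-order contribution on each full period of the image, control the deviation of the pushed-forward density via the distortion formula \eqref{eq:distortion11}, and finally observe that the only nontrivial estimate is $\int L/(\dot f_\gamma)^2\,dx = O(L^{-1/2})$ near $x=1/4,3/4$. The paper packages this through Lemma~\ref{lem:cut-standard} (normalizing each fully-crossing image piece into a standard pair $(\check\gamma,\check\rho)$, then bounding $|\check\rho-1|$ and summing via a distribution-function argument), whereas you work directly with the unnormalized pushed-forward density $w$ and bound its total variation with a telescoping sum over unit intervals; the two are arithmetically the same, and your version is arguably a bit more self-contained since it does not need to invoke the standard-pair decomposition lemma.
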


\begin{proof}
We decompose $\gamma \setminus \Sc_{1/2}$ into four pieces according to
membership in the four regions $[0,1/4) \times \T^1, [1/4, 1/2) \times \T^1, [1/2, 3/4) \times \T^1, [3/4, 1) \times \T^1$.
For concreteness, we consider below
the piece $\bar{\gamma} = (\gamma \setminus \Sc_{1/2}) \cap \big( [1/4, 1/2) \times \T^1 \big)$
and will estimate $\int_{\bar \gamma} \phi \circ F \, d (\gamma, \rho)$.
The following considerations can be straightforwardly extended to the other pieces; we leave
this to the reader. Below, we write $\bar \rho : I_{\bar \gamma} \to [0,\infty)$ for the
density for which $(\bar \gamma, \bar \rho)$ is the normalization of
$(\gamma, \rho)|_{\bar \gamma}$.


\medskip

Apply Lemma \ref{lem:cut-standard} to $(\bar \gamma, \bar \rho)$ to obtain the collection
$\Lc = \Lc_{(\bar \gamma, \bar \rho)}$ of fully-crossing standard pairs and
the remainder set $\Ec = \Ec_{(\bar \gamma, \bar \rho)} \subset F(\bar \gamma)$. We have
\[
\int_{\bar \gamma} \phi \circ F \,  d( \bar \gamma, \bar \rho)  =
\int \phi d \mu_{\Lc} + \int \phi d \mu_\Ec =
\sum_{ (\check \gamma, \check \rho) \in \Lc} c_{\check \gamma} \int \phi \, d (\check \gamma, \check \rho)
+ O(\| \phi \|_{C^0} L^{- 1/2}) \, ,
\]
where $c_{\check \gamma} := (\bar \gamma, \bar \rho)(F^{-1} \check \gamma)$.

For each $(\check \gamma, \check \rho) \in \Lc$, we first estimate the $(\check \gamma, \check \rho)$-summand
$\int \phi \, d (\check \gamma, \check \rho) = \int_0^1 \phi \, \check \rho \, d x = \int_0^1 \phi \, (\check \rho - 1) dx$.
Observe from \eqref{eq:distortion11} that
\[
\bigg| \frac{d}{dx} \log \check \rho \bigg| = O \bigg( \frac{1}{| \dot
f_\gamma(x_{\check \gamma})|} + \frac{L}{|\dot f_\gamma(x_{\check
\gamma})|^2} \bigg),
   \]
   where we set $x_{\check \gamma}$ to be the right-endpoint of
   $I_{F^{-1} \check \gamma}$. Checking the simple estimate
   $|\dot f_\gamma (x)| \approx L |x_{\check \gamma} - \frac14|$ on
   $I_{F^{-1} \check \gamma}$, it follows that
   $| \check \rho - 1| = O(L^{-1} |x_{\check \gamma} - \frac14|^{-2})$.
   Putting this all together,
   \[
   \int \phi d (\check \gamma, \rho_{\check \gamma})  = \int_0^1 \phi (\rho_{\check \gamma} - 1) dx = O \left( \| \phi \|_{C^0}
   \cdot L^{-1} \left|x_{\check \gamma} - \frac14\right|^{-2} \right) \, .
   \]

   Let $E_{\check\gamma} = L^{-1}|x_{\check\gamma}-1/4|^{-2}$; since
   $x_{\check\gamma}\in I_{\bar\gamma}$ and
   $I_{\bar \gamma} = [\frac14 + b_L, \frac12]$ where
   $b_L \approx L^{- 1/2}$, we gather that there exists $\bar E = O(1)$ so
   that $E_{\check\gamma} < \bar E$.
   Moreover, note that since
   $c_{\check \gamma} = (\bar \gamma, \bar \rho)(F^{-1} \check \gamma) \approx
   |I_{F^{-1} \check \gamma}|$:
   \begin{align*}
   \sum_{(\check\gamma,\check\rho)\in\cL\ \text{s.t.}\ E_{\check\gamma} >
   z}c_{\check\gamma} = O(L^{-1/2}z^{-1/2})\text{ for any $z > 0$}.
   \end{align*}
   Thus:
   \begin{align*}
   \sum_{ (\check \gamma, \check \rho) \in \Lc} c_{\check \gamma} \int
   \phi \, d (\check \gamma, \check \rho) &\le C\|\phi\|_{C^{0}}\sum_{
   (\check \gamma, \check \rho) \in \Lc} c_{\check\gamma}\,E_{\check\gamma}\\
   &\le C\|\phi\|_{C^{0}} \int_0^{\bar E}dz \sum_{(\check\gamma,\check\rho)\in\cL\ \text{s.t.}\ E_{\check\gamma} > z}c_{\check\gamma} = O(\|\phi\|_{C^{0}}L^{-1/2}).
   \end{align*}
   \ignore{Summing over the $\check \gamma$ and, we conclude
   \[
\bigg| \int \phi \, d \mu_\Lc \bigg|
\leq C \| \phi \|_{C^0} \sum_{\check \gamma} |I_{F^{-1} \check \gamma}| \cdot L^{-1} |x - \frac14|^{-2} \, .
\]
The summation on the RHS is a right-endpoint quadrature of the decreasing function $x \mapsto (x - \frac14)^{-2}$ on the interval $x \in I_{\bar \gamma}$. The quadrature is therefore bounded from above by the corresponding integral over
$I_{\bar \gamma}$. To wit,
\begin{align*}
\bigg| \int \phi \, d \mu_\Lc \bigg| & \leq C \| \phi \|_{C^0} \int_{\frac14 + b_L}^{\frac12} L^{-1} (x - \frac14)^{-2} dx \, , \\
\end{align*}
where $I_{\bar \gamma} = [\frac14 + b_L, \frac12]$. Noting that $b_L \approx L^{- 1/2}$, we compute
\[
\bigg| \int \phi \, d \mu_\Lc \bigg|   
\leq C \| \phi \|_{C^0} L^{-1}  \bigg( - (t - 1/4) ^{-1} \bigg|_{\frac14 + b_L}^{1/2}\bigg)
\leq C  \| \phi \|_{C^0} (L b_L)^{-1} = O(\| \phi \|_{C^0} L^{- 1/2}) \,
\]}
where $C > 0$ is an absolute constant independent of $L$ and $\phi$. This completes the proof. \qedhere

\end{proof}

\begin{proof}[Proof of Proposition \ref{prop:docStdPair}]
Apply Corollary \ref{lem:iterateStdPair} to $F^{n-1}_*(\gamma, \rho)$ to obtain
\[
F^{n-1}_* (\gamma, \rho) = \sum_{(\gamma_{n-1}, \rho_{n-1}) \in \Lc^{n-1}_{(\gamma, \rho)} }
c_{\gamma_{n-1} } (\gamma_{n-1}, \rho_{n-1}) + \hat \mu^{n-1}_\Ec \, .
\]
Then,
\begin{align*}
\int \phi \circ F^n \, d (\gamma,\rho) & = \sum_{(\gamma_{n-1}, \rho_{n-1} ) \in \Lc^{n-1}_{(\gamma, \rho)}} c_{\gamma_{n-1}} \int \phi \circ F \, d (\gamma_{n-1}, \rho_{n-1}) \\
& + O( \| \phi \|_{C^0} \cdot ((n-1) L^{- 3/4} + L^{- 1/2}) \, .
\end{align*}
The proof is complete on applying Lemma \ref{lem:oneStepDoC} to each summand.
\end{proof}

\subsubsection{Correlation control for Lebesgue measure}

Using the equidistribytion estimate for standard pairs and the machinery
developed so far, we conclude this section with the proof of
Theorem \ref{thm:decayLebesgue}, reformulated below as Corollary~\ref{cor:LebDoC}.
\begin{cor}\label{cor:LebDoC}%
Let $\phi, \psi : \T^1 \to \R$ be $C^1$, $x$-dependent
observables. Then,
  \begin{align*}%
  \int \psi \cdot \phi \circ F^n \, d \Leb - \int \psi \int
  \phi = O\left(\| \psi \|_{C^1} \| \phi\|_{C^1} \cdot \big( (n -1) L^{-\frac34} +
  L^{-\frac12} \big) \right)
  \end{align*}
  \end{cor}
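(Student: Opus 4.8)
The plan is to deduce the estimate from Proposition~\ref{prop:docStdPair} by disintegrating $\Leb$ into a one-parameter family of fully-crossing standard pairs. First I would reduce to the mean-zero case: replacing $\phi$ by $\phi - \int_{\T^1}\phi\,dx$ changes $\int \psi\cdot\phi\circ F^n\,d\Leb$ by $(\int_{\T^1}\phi\,dx)\cdot\int\psi\,d\Leb$ and changes $\int\psi\int\phi$ by the same amount, since for $x$-dependent observables $\int\psi\,d\Leb = \int_{\T^1}\psi\,dx$; moreover it at most doubles $\|\phi\|_{C^1}$. So it suffices to prove $\int\psi\cdot\phi\circ F^n\,d\Leb = O\big(\|\psi\|_{C^1}\|\phi\|_{C^1}((n-1)L^{-3/4}+L^{-1/2})\big)$ under the standing assumption $\int_{\T^1}\phi\,dx = 0$ (and, excluding the trivial cases, $\phi,\psi$ not identically $0$).

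Next, for each $y\in\T^1$ I would let $\gamma^{(y)} = \{(x,y): x\in(0,1)\}$ be the horizontal fully-crossing u-curve at height $y$ (the u-curve conditions holding trivially since $h_{\gamma^{(y)}}\equiv y$), and disintegrate $\Leb = \int_{\T^1}\ell_y\,dy$, where $\ell_y$ denotes the uniform probability density on $(0,1)$. The one point requiring care is that $\psi$ itself is not a legitimate standard-pair density: it need not be positive, and $\|d\log|\psi|/dx\|$ need not be bounded by the distortion constant $3C_0$. To get around this I would introduce a constant $A \approx \|\psi\|_{C^1}$, say $A := 2\|\psi\|_{C^1}$, so that $\psi + A > 0$ with $|d\log(\psi+A)/dx| = |\dot\psi/(\psi+A)| \le \|\dot\psi\|_{C^0}/(A-\|\psi\|_{C^0}) \le 1 \le 3C_0$; then, setting $Z := \int_0^1(\psi+A)\,dx \ge A - \|\psi\|_{C^0} > 0$, the density $\rho_\psi := (\psi+A)/Z$ makes $(\gamma^{(y)},\rho_\psi)$ a genuine fully-crossing standard pair, as is $(\gamma^{(y)},\ell_y)$. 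Writing $\psi = (\psi+A) - A$ and using the disintegration gives
\[
\int\psi\cdot\phi\circ F^n\,d\Leb = \int_{\T^1}\Big( Z\int\phi\circ F^n\,d(\gamma^{(y)},\rho_\psi) - A\int\phi\circ F^n\,d(\gamma^{(y)},\ell_y)\Big)\,dy.
\]

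Finally I would apply Proposition~\ref{prop:docStdPair} to each of the two fully-crossing standard pairs $(\gamma^{(y)},\rho_\psi)$ and $(\gamma^{(y)},\ell_y)$ — observing that the implied constant there is uniform over all fully-crossing standard pairs and enters $\phi$ only through $\|\phi\|_{C^0}$ — to bound both inner integrals by $O\big(\|\phi\|_{C^0}((n-1)L^{-3/4}+L^{-1/2})\big)$, uniformly in $y$. Since $Z + A = O(\|\psi\|_{C^1})$ and $\|\phi\|_{C^0}\le\|\phi\|_{C^1}$, integrating over $y\in\T^1$ (a set of total mass $1$) yields the claimed bound. I do not expect a genuine obstacle: the only nontrivial idea is the constant shift reducing the non-positive, badly-distorted weight $\psi$ to standard-pair densities, and the two error terms $(n-1)L^{-3/4}$ and $L^{-1/2}$ are inherited verbatim from Proposition~\ref{prop:docStdPair}; the rest is bookkeeping.
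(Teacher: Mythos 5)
Your proposal is correct and takes essentially the same approach as the paper: both shift $\psi$ by a constant of order $\|\psi\|_{C^1}$ (the paper uses $c=2\|\psi\|_{C^1}$, you use $A=2\|\psi\|_{C^1}$) so that the result, after normalization, is a legitimate standard-pair density on each horizontal curve $\gamma^y$, then apply Proposition~\ref{prop:docStdPair} and integrate over $y$. The one small cosmetic difference is in how the constant shift is discharged: the paper invokes $F$-invariance of $\Leb$ once to cancel the shift algebraically, whereas you instead apply Proposition~\ref{prop:docStdPair} a second time to the pair $(\gamma^{(y)},\ell_y)$ with uniform density; both routes yield the same bound with essentially the same effort.
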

  \begin{proof}
  Let $c > 0$ be a constant, to be specified later, and define
  $\hat \psi = \hat \psi_c = \frac{\psi + c}{\int \psi + c}$. Define
  $\hat \phi = \phi - \int \phi$. For each fixed $y \in \T^1$, we intend to
  apply Proposition \ref{prop:docStdPair} to
  \begin{align*}
  \int \hat \phi \circ F^n \, d (\gamma^y, \hat \psi) \, ,
  \end{align*}
  where $\gamma^y := \T^1 \times \{ y \}$
  and we regard $\hat \psi$ as a density function on $\T^1$ as in the
  definition of a standard pair. To make this legitimate, the parameter
  $c > 0$ must be chosen so $\hat \psi$ is (i) nonnegative and (ii)
  satisfies the distortion estimate \eqref{eq:dist}.  For this,
  \[
  \frac{d \log \hat \psi}{dx} = \frac{1}{\hat \psi} \frac{d \hat \psi}{dx} = \frac{1}{\psi + c} \frac{d \psi}{dx} \, ,
  \]
  hence
  $| \frac{d \log \hat \psi}{dx} | \leq \frac{1}{c - \| \psi \|_{C^1}}
  \| \psi\|_{C^1}$
  Taking $c = 2 \| \psi \|_{C^1}$ yields (i) $\psi + c > 0$ and
  (ii) $| \frac{d \log \hat \psi}{dx} | \leq 1 \leq 3 C_0$, as needed.

  Applying Proposition \ref{prop:docStdPair} for each fixed $y$, then
  integrating over $y \in \T^1$, we have
  \begin{align*}
  \int \hat \psi \cdot \hat \phi \circ F^n \, d \Leb & =
  \int \bigg( \int \hat \phi \circ F^n \, d (\gamma^y, \hat \psi) \bigg) dy \\
  & = O \bigg( \| \phi \|_{C^1} \big( (n-1) L^{-\frac34} + L^{-\frac12}  \big) \bigg) \, ,
  \end{align*}
  while
  \begin{align*}
  \int \hat \psi \cdot \hat \phi \circ F^n \, d \Leb & = \frac{1}{\int \psi + c} \bigg( \int (\psi + c) \cdot \phi \circ F^n \, d \Leb - \int \phi \int (\psi + c) \bigg) \\
  & = \frac{1}{\int \psi + c} \bigg( \int \psi  \cdot \phi \circ F^n \, d \Leb - \int \phi \int \psi  \bigg)
  \end{align*}
  holds since $F$ preserves $\Leb$. This completes the proof.
  \end{proof}
\section{Central Limit Theorem}

Let $\phi$ be a $C^1$, $x$-dependent observable.
We obtain in this section a Central Limit Theorem (CLT) for sequences of the form
\begin{align}\label{e_birkhoffSum}
S_{N, L}\phi := \sum_{i = 0}^{N-1} \phi \circ F^i_L
\end{align}

where $N = N(L)$ is a suitably chosen function of $L \geq 0$ which
increases sufficiently slowly as in the assumptions of
Theorem~\ref{thm:CLT}.

\medskip

For this, we follow the standard route of obtaining a martingale
difference approximation for the sequence $S_{N, L} \phi$. The plan is
as follows. In Section 3.1 we will define, for each $L$, a filtration of
$\T^2$ by $F_L$-preimages of fully-crossing standard pairs (plus a small
remainder which we do not control). In Section 3.2, we will define a
martingale difference approximation $\tilde S_{N, L} \phi$, and show how
a CLT for the approximation implies a CLT for the original
$S_{N, L} \phi$.  Finally, in Section 3.3 we apply a result of McLeish
(see \cite{McL1974}) on CLTs for martingale difference arrays to conclude
the CLT for $\tilde S_{N, L} \phi$, thereby completing the proof of
Theorem \ref{thm:CLT}.

\medskip

{\it Notation for Section 4. } Since this section has more of a
probabilistic flavor, we will at times write $\P$ for Lebesgue measure
on $\T^2$ and $\E$ for the expectation with respect to $\P$. Given a
$\sigma$-algebra $\Fc \subset \operatorname{Bor}(\T^2)$, we write
$\E(\cdot | \Fc)$ for the conditional expectation w.r.t. $\Fc$.

At times in Section 4, when $L$ is fixed or when clear from context, we
will write $F = F_L$.



\subsection{Filtration by u-curves}
Fix $L > 0$ sufficiently large for the purposes of the results in
Section 3.  By the end of Section 4.1, we will have constructed a
sequence of $\sigma$-algebras $\Uc_i = \Uc_i(L), i \geq 1$, each
generated by a partition of $\T^2$ into fully crossing curves, plus some
small remainder set, with the property that
$F_L \Uc_i \subset \Uc_{i + 1}$. As a result, the pull-backs
$\Fc_i = \Fc_i(L) := F^{-i}_L \Uc_i$ comprise a filtration on
$\T^2$. This is the filtration we will use to define our martingale
approximation in Section 4.2.


\medskip

{\it Notation. }For $y \in \T^1$, let $\gamma^y := \T^1 \times \{ y \}$,
which is clearly a fully-crossing u-curve.  Writing
${\bf 1} : (0,1) \to \R$ for the density identically equal to $1$, we
regard $(\gamma^y, {\bf 1})$ as a fully-crossing standard pair. Applying
the machinery in Section \ref{sec:iter}, for $n \geq 0$ we define the
collections of measure pairs
\[
\Lc^n_y = \Lc^n_{(\gamma^y, {\bf 1})} \, , \quad \Ic^n_y =
\Ic^n_{(\gamma^y, {\bf 1})} \, , \quad \Jc^n_y = \Jc^n_{(\gamma^y,
{\bf 1})} \, ,
\]
and the remainder set $\Ec^n_y = \Ec^n_{(\gamma^y, {\bf 1})}$. Define
the partition $\Pc^n_y$ of $F^n(\gamma^y)$ by
\[
\Pc^n_y = \Lc^n_y \cup \Ic^n_y \cup \Jc^n_y \cup \{ \Ec^n_y\} \, ,
\]
where the $\Lc^n_y, \Ic^n_y, \Jc^n_y$ are treated above as collections
of u-curves, and $\{ \Ec^n_y\}$ is the trivial partition on $\Ec^n_y$.

\smallskip

Below, for partitions $\a, \b$ on the same space, we write $\a \leq \b$
if each $\a$-atom is a union of $\b$-atoms (\ie $\alpha$ is coarser than
$\beta$).  We write $\a \vee \b$ for the {\emph join} of $\a$ and $\b$,
i.e., the partition of the form $\{ C \cap D : C \in \a, D \in \b\}$.
Clearly, if $\a\le\b$, then $\a\vee\b = \b$. Given a partition $\a$ we
denote with $\sigma(\a)$ the $\sigma$-algebra generated by $\a$; notice
that if $\a\le\b$ we have $\sigma(\a)\subset\sigma(\b)$.



\medskip

\subsubsection*{Construction of $\Uc_i$}  We are about to construct inductively a
sequence $\Xi_i = \Xi_i(L)$ of measurable partitions of $\T^2$ into
(mostly) fully crossing curves with the property that
$F \Xi_i \leq \Xi_{i +1}$.  The $\sigma$-algebras $\Uc_i$ will be of the
form $\Uc_i = \sigma( \Xi_i)$, and the property
$F \Uc_i \subset \Uc_{i + 1}$ will follow by the remark made above.

We set $\Xi_0$ to be the partition of $\T^2$ into the u-curves
$\{\gamma^y\}_{y \in \T^1}$.  Assume by induction that we have defined
the partitions $\Xi_0, \cdots, \Xi_i$, we will construct $\Xi_{i + 1}$
on $\T^2$ as follows: we define $\Xi_{i + 1}|_{F^{i+1}(\gamma^y)}$
separately for each $y \in \T^1$.  For fixed  $y$, we set
\begin{align*}
  \Xi_{i + 1}|_{F^{i+1}(\gamma^y)} = F(\Xi_i|_{F^i(\gamma^y)})
  \vee \Pc^{i+1}_y \, .
\end{align*}
Reconstituting $\Xi_{i + 1}$ from its definition on each atom of
$F^{i+1}(\Xi_0) = \{ F^{i+1}(\gamma^y)\}_{y \in \T^1}$,
it is clear that $F(\Xi_i) \leq \Xi_{i+1}$, as desired.

\smallskip

Having constructed the $\Uc_i$, we define the sequence of $\sigma$-algebras
\begin{align*}
  \Fc_i = F^{-i} \Uc_i \, , \quad i \geq 1 \, ,
\end{align*}
  which is clearly seen to be
an increasing filtration on $\T^2$. Moreover, the partition
$\Pc_y^n$ depends measurably on $y$, (in fact, on a piecewise
continuous fashion);  from this
it is not hard to check that each of the $\Uc_i, \Fc_i, i \geq 1$ is contained in $\Bor(\T^2)$.

\subsubsection*{Properties of the $\Uc_i$ } Let us record some basic facts for
future use.  Set $\tilde G^n = \tilde G^n(L) = \bigcup_y \Lc^n_y$, where
$\Lc^n_y$ is regarded as a subset of $F^n(\gamma^y)$.  Then,
$\Gamma^n = \Gamma^n(L) := \Xi_n|_{\tilde G^n}$ is a partition of
$\tilde G^{n}$ consisting of fully-crossing u-curves, coinciding with
the union $\cup_y \Lc^n_y$ of u-curves. We continue to abuse notation
and write $\Gamma^n$ for both the collection of u-curves and the
corresponding collection of standard pairs $\cup_y \Lc^n_y$.  We set
$G^n = G^n(L) := F_L^{-n} \tilde G^n$ and
$B^n = B^n(L) : = \T^2 \setminus G^n$.

\begin{lem}\label{lem:filtration}
For each $n \geq 1$, the following holds.
\begin{itemize}
	\item[(a)] We have $\Leb B^n = O((n-1) L^{-3/4} + L^{- 1/2})$.
	\item[(b)] Restricted to the set $F^{-1}\tilde{G}^n$, the $\sigma$-algebra $F^{-1} \Uc_n$ is
	generated by atoms of $F^{-1} \Gamma^n$, each of which has diameter bounded
	from above by $L^{- \frac12}$.
\end{itemize}
\end{lem}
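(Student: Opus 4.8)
The statement has two parts. For (a), I would track the Lebesgue mass assigned to the "bad" set $B^n$, which by construction consists precisely of those points whose forward orbit has already landed in some $\Ic$, $\Jc$, or $\Ec$ curve by time $n$; i.e. $\Leb B^n = \Leb(\T^2 \setminus G^n)$, and since $\Leb$ disintegrates over $y \in \T^1$ into the fully-crossing standard pairs $(\gamma^y, \mathbf{1})$, we have $\Leb B^n = \int_{\T^1} (\gamma^y, \mathbf{1})\big( (F^{-n}\tilde G^n)^c \cap \gamma^y \big)\, dy$. The inner quantity is exactly the total mass $\|\mu^n_{\Ic,y}\| + \|\mu^n_{\Jc,y}\| + \|\mu^n_{\Ec,y}\|$ from the decomposition $F^n_*(\gamma^y,\mathbf{1}) = \mu^n_\Lc + \mu^n_\Ic + \mu^n_\Jc + \mu^n_\Ec$ (here using that the coefficients $c^k_{\gamma_k}$ are just conditional masses, and Corollary \ref{lem:iterateStdPair} repackages the $\Ic, \Jc$ pieces into $\hat\mu^n_\Ec$ with $\|\hat\mu^n_\Ec\| = O(L^{-1/2} + nL^{-3/4})$). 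Applying Proposition \ref{prop:massEst} (or directly Corollary \ref{lem:iterateStdPair}) gives this mass is $O((n-1)L^{-3/4} + L^{-1/2})$ uniformly in $y$; integrating over $y$ preserves the bound, which is (a).

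For (b), restricted to $F^{-1}\tilde G^n$ the $\sigma$-algebra $F^{-1}\Uc_n$ is generated by the atoms $F^{-1}(C)$ where $C$ ranges over atoms of $\Xi_n$ lying inside $\tilde G^n$ — but by the "Properties of the $\Uc_i$" discussion, $\Xi_n|_{\tilde G^n} = \Gamma^n$ is exactly the collection of fully-crossing u-curves $\bigcup_y \Lc^n_y$, so the generating atoms are $F^{-1}(\gamma_n)$ for $\gamma_n \in \Gamma^n$. It then remains to bound the diameter of each such $F^{-1}(\gamma_n)$. Since $\gamma_n$ is a fully-crossing u-curve — a graph over $I_{\gamma_n} = (0,1)$ with slope $\leq 1/10$ — its preimage $F^{-1}(\gamma_n)$ is a curve whose tangent directions lie in the \emph{stable}-type cone, where $F$ is strongly expanding; concretely, using the formula $F(x,y) = (f(x) - y, x)$ one sees $F^{-1}$ contracts the relevant direction by a factor controlled by $|\dot f|$. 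The key point is that a fully-crossing $\gamma_n$, having $x$ ranging over all of $(0,1)$, necessarily crosses the critical strips, but its preimage is a near-vertical curve; I would estimate its $x$-extent and $y$-extent separately. The $y$-coordinate of $F^{-1}(\gamma_n)$ equals the $x$-coordinate of $\gamma_n$, so the $y$-extent is the full $\T^1$... which would be too big — so the correct reading is that $F^{-1}(\gamma_n)$ wraps around and the relevant statement is about the diameter of each \emph{connected component} / atom; more carefully, $F^{-1}(\gamma_n) = \{(x, f(x) - y') : (y', x) \in \gamma_n\}$ is parametrized by $x \in I_{\gamma_n}$ appearing as the \emph{second} coordinate, so it is a graph $x = g(y)$ over $y \in (0,1)$ with $|dg/dy| = |1/\dot f(g(y)) \cdot (\ldots)| $ small where $|\dot f|$ is large. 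The diameter bound $L^{-1/2}$ should come from the fact that the $x$-width of $F^{-1}(\gamma_n)$ is $O(L^{-1/2})$: indeed $\gamma_n$ being a u-curve of a standard pair that was produced by the cutting lemmata, its $F$-preimage lies in a region where expansion is at least of order $L^{1/2}$ (the weakest expansion tolerated, from $\Sc_{1/2}$), so the $x$-width of a fully-crossing image preimages to an $x$-width $\lesssim L^{-1/2}$, while the $y$-width is likewise small since $y$ of the preimage equals $x$ of $\gamma_n$ restricted appropriately — I would make this precise by invoking that every $\gamma_n \in \Lc^n_y$ arose as a fully-crossing piece $\tilde\gamma_m = \tilde\gamma \cap [m,m+1)$ of $\tilde F(\check\gamma)$ for $\check\gamma$ outside $\Sc_{1/2}$, whence $F^{-1}(\gamma_n)$ has $x$-diameter $\leq \sup 1/|\dot f_\gamma| \leq L^{-1/2}$.

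**Main obstacle.** The bookkeeping in (a) is routine given the earlier propositions; the real content is the diameter estimate in (b). The subtlety is correctly identifying which coordinate of $F^{-1}(\gamma_n)$ is "small" and justifying the $L^{-1/2}$ bound rather than merely $L^{-1/4}$ or $O(1)$: one must use that the fully-crossing curves in $\Gamma^n$ always have their immediate $F$-preimage outside the widest critical strip $\Sc_{1/2}$ (by the allocation rules in Lemmata \ref{lem:cut-standard}, \ref{lem:cut-sub}, \ref{lem:cut-full}, where pieces are only promoted to $\Lc$ after cutting away $\Sc_{1/2}$), so the expansion of $F$ there — hence the contraction of $F^{-1}$ — is at least of order $L^{1/2}$. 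I expect the proof to isolate this observation and then conclude the diameter bound by a one-step estimate, since a fully-crossing u-curve has $x$-diameter exactly $1$ and its preimage therefore has $x$-diameter $O(L^{-1/2})$, with the $y$-diameter handled analogously (or absorbed, noting the preimage curve is nearly vertical with controlled total extent).
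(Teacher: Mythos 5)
The paper states Lemma~\ref{lem:filtration} without proof, so there is no in-paper argument to compare against; the question is whether your proposal is sound, and in essence it is.

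For part (a), your reduction via the disintegration $\Leb = \int_{\T^1} (\gamma^y,\mathbf 1)\,dy$ together with Proposition~\ref{prop:massEst} (or Corollary~\ref{lem:iterateStdPair}) is exactly right: since the curves $F^n(\gamma^y)$ are pairwise disjoint, $B^n \cap \gamma^y = \{p \in \gamma^y : F^n(p)\notin\Lc^n_y\}$, whose $(\gamma^y,\mathbf 1)$-mass is $1 - \|\mu^n_\Lc\| = \|\mu^n_\Ic\| + \|\mu^n_\Jc\| + \|\mu^n_\Ec\| = O(L^{-1/2} + nL^{-3/4})$, uniformly in $y$; integrating gives (a). (Your phrase ``whose forward orbit has already landed in some $\Ic,\Jc,\Ec$ curve by time $n$'' is slightly off --- a point may visit $\Ic$ at some $k<n$ and be promoted back to $\Lc$ later; what matters is its status at time $n$ --- but your actual computation uses the correct characterization.)

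For part (b), your key observation is correct and is the heart of the argument: every $\gamma_n\in\Lc^n$ is produced by Lemma~\ref{lem:cut-standard} or~\ref{lem:cut-full} applied to some $(\gamma_{n-1},\rho_{n-1})\in\Lc^{n-1}\cup\Ic^{n-1}$ (substandard pairs contribute nothing to $\Lc$), and in both lemmata the fully-crossing pieces are carved out of $\gamma_{n-1}\setminus\Sc_{1/2}$. The middle of your write-up, however, goes astray on the coordinates. With $F(x,y)=(f(x)-y,x)$ one has $F^{-1}(a,b)=(b,f(b)-a)$, so the $x$-coordinate of $F^{-1}(\gamma_n)$ equals the $y$-coordinate of $\gamma_n$, not the other way around, and $F^{-1}(\gamma_n)$ is \emph{not} a graph $x=g(y)$ over $y\in(0,1)$. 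The clean way to finish is the one implicit in your last sentence: $F^{-1}(\gamma_n)$ is a subcurve $\check\gamma'$ of the u-curve $\gamma_{n-1}$, hence a graph $y=h(x)$ over an interval $I$ with $|\dot h|\le 1/10$; since $\check\gamma'\cap\Sc_{1/2}=\emptyset$ we have $|\dot f_{\gamma_{n-1}}|\ge 2L^{1/2}-1/10$ on $I$, and since $F(\check\gamma')=\gamma_n$ is fully-crossing, $\int_I|\dot f_{\gamma_{n-1}}|=1$, so $|I|\le (2L^{1/2}-1/10)^{-1}$; therefore $\diam\check\gamma'\le\sqrt{1+1/100}\,|I|< L^{-1/2}$ for $L$ large. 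That said, your final conclusion is correct and the approach is the natural one; only the coordinate bookkeeping in the middle paragraph needs tidying.
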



In the coming proofs, we routinely take conditional expectations with
respect to the $\sigma$-algebras $\{\Uc_n\}$.  Below we record how these
computations are carried out.


\begin{lem}\label{lem:representCondExpectation}
  Let $\psi : \T^2 \to \R$ be a $C^0$ function. Then, there is a version
  of the conditional expectation $\E(\psi | \Uc_n)$ of $\psi$ with
  respect to $\Uc_n$ with the property that for every
  $(\gamma_n, \rho_n) \in \Gamma^n$, we have
  \begin{align*}
    \E(\psi | \Uc_n) = \int \psi \, d(\gamma_n, \rho_n)
    &= \int \psi(x, h_{\gamma_n}(x)) \, \rho_{n}(x) \, dx \quad \text{ on } \gamma_n \, .
  \end{align*}
\end{lem}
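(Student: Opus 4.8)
The statement is essentially a disintegration-of-measure assertion, so the plan is to verify directly that the proposed formula defines a legitimate version of $\E(\psi \mid \Uc_n)$, i.e., that it is $\Uc_n$-measurable and that it integrates correctly against $\Uc_n$-measurable test functions. First I would recall that $\Uc_n = \sigma(\Xi_n)$, where $\Xi_n$ is a partition of $\T^2$ into curves: on the ``good'' set $\tilde G^n = \bigcup_y \Lc^n_y$ the atoms are exactly the fully-crossing u-curves comprising $\Gamma^n$, and on the complement the atoms come from the (possibly trivial) remainder pieces. So the candidate function
\[
g(p) := \int \psi \, d(\gamma_n, \rho_n) \quad \text{ for } p \in \gamma_n, \ (\gamma_n,\rho_n) \in \Gamma^n,
\]
extended arbitrarily (say as $\E(\psi \mid \Uc_n)$ in the usual abstract sense) off of $\tilde G^n$, is constant on each atom of $\Xi_n$ lying in $\tilde G^n$, hence $\Uc_n$-measurable there; measurability as a function on all of $\T^2$ follows from the fact, noted just before the lemma, that $\Pc^n_y$ (and hence the decomposition of $F^n(\gamma^y)$ into standard pairs) depends piecewise-continuously on $y$, together with $\psi \in C^0$.

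Next I would check the defining property of conditional expectation: for every $\Uc_n$-measurable $h$,
\[
\int_{\T^2} h \cdot g \, d\Leb = \int_{\T^2} h \cdot \psi \, d\Leb.
\]
By a monotone-class argument it suffices to test against $h = \mathbf 1_A$ for $A$ a union of $\Xi_n$-atoms; and by linearity/limits it suffices to treat sets $A$ that are unions of u-curve atoms within a single $F^n(\gamma^y)$, or more precisely to establish the identity after integrating out the $y$-parameter. The key input is that $\Leb$ on $\T^2$ disintegrates over the partition $\Xi_0 = \{\gamma^y\}$ into the uniform measures on the horizontal curves, and that $F^n$ preserves $\Leb$; thus pushing forward, $\Leb = \int_{\T^1} F^n_*(\gamma^y, \mathbf 1)\, dy$, and by the construction in Section \ref{sec:iter} the restriction of $F^n_*(\gamma^y,\mathbf 1)$ to $\tilde G^n$ is precisely $\sum_{(\gamma_n,\rho_n)\in\Lc^n_y} c_{\gamma_n}(\gamma_n,\rho_n)$. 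Feeding this disintegration into the integral $\int h\psi \, d\Leb$ and using that $h$ is constant ($= g$) on each $(\gamma_n,\rho_n)$ reproduces $\int h g \, d\Leb$; on the remainder $B^n$ the identity holds for the abstract version of $\E(\psi\mid\Uc_n)$ by definition. This establishes the claimed formula.

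I expect the only real subtlety — rather than a genuine obstacle — to be bookkeeping the measurable dependence on $y$: one must make sure the assignment $y \mapsto (\Lc^n_y, \Ic^n_y, \Jc^n_y, \Ec^n_y)$ and the associated endpoint data vary measurably (the excerpt asserts piecewise continuity), so that $g$ is genuinely Borel and the disintegration $\Leb = \int F^n_*(\gamma^y,\mathbf 1)\,dy$ is valid as an identity of measures on $\Bor(\T^2)$. Once that is in hand, everything else is the standard verification that a disintegration gives conditional expectation, and the second displayed equality in the statement is just the definition of $\int \psi \, d(\gamma_n,\rho_n)$ via the parametrization $x \mapsto (x, h_{\gamma_n}(x))$ together with $\int_{I_{\gamma_n}} \rho_n \, dx = 1$.
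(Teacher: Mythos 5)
The paper states this lemma without proof, immediately following it with the convention that $\E(\psi\mid\Uc_n)$ will hereafter be identified with the displayed formula; evidently the authors regard the verification as routine, so there is no official argument to compare yours against. Your disintegration argument is the correct and standard one: define the candidate $g$, check $\Uc_n$-measurability (your appeal to the paper's remark on the piecewise-continuous $y$-dependence of $\Pc^n_y$ is the right ingredient), and verify $\int_A \psi\,d\Leb=\int_A g\,d\Leb$ for $A\in\Uc_n$ by writing $\Leb=\int_{\T^1}F^n_*(\gamma^y,\mathbf 1)\,dy$ (via $F^n$-invariance of $\Leb$), invoking Corollary~\ref{lem:iterateStdPair} together with the definition of $\Gamma^n=\Xi_n|_{\tilde G^n}$ to identify the conditional measure on each atom $\gamma_n\subset\tilde G^n$ with $(\gamma_n,\rho_n)$, and handling the remainder $B^n$ (itself $\Uc_n$-measurable) by the abstract version. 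Two minor points: in the clause ``using that $h$ is constant ($=g$) on each $(\gamma_n,\rho_n)$'' you mean that $g$ is constant on each atom (the constancy of $h=\mathbf 1_A$ there being trivial since $A$ is a union of atoms); and it is worth making explicit the inductive check, implicit in the paper's construction, that each $\gamma_n\in\Lc^n_y$ really is a single atom of $\Xi_n$ rather than a union of finer $F(\Xi_{n-1})$-cells, which is what justifies equating $\Xi_n|_{\tilde G^n}$ with $\cup_y\Lc^n_y$.
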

Hereafter we intentionally confuse $\E (\psi | \Uc_n)$ with the
expression on the right-hand side.
\begin{lem}\label{l_conditionalEstimate}
  Let $\phi : \T \to \R$ be a $C^0$ function with zero average. Then we
  have, for any $0\le i < L^{1/4}$:
  \begin{align*}
   \E|\E(\phi | \Uc_i)| = O(\|\phi\|_{C^{0}}L^{-1/2})
  \end{align*}
\end{lem}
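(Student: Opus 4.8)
The plan is to bound $\E|\E(\phi|\Uc_i)|$ by splitting $\T^2$ into the "good" set $\tilde G^i$, where $\Uc_i$ is generated by the fully-crossing standard pairs in $\Gamma^i = \bigcup_y \Lc^i_y$, and the complementary "bad" set $B^i$ (pushed forward, i.e.\ $\T^2\setminus\tilde G^i$), whose measure is controlled by Lemma~\ref{lem:filtration}(a). On the bad set we simply estimate $|\E(\phi|\Uc_i)|\le \|\phi\|_{C^0}$ crudely, so that its contribution is $O(\|\phi\|_{C^0}\,\Leb(B^i)) = O(\|\phi\|_{C^0}((i-1)L^{-3/4}+L^{-1/2}))$, and for $i < L^{1/4}$ this is $O(\|\phi\|_{C^0} L^{-1/2})$ as required. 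So the real work is on the good set.

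On $\tilde G^i$, using the representation in Lemma~\ref{lem:representCondExpectation}, for each standard pair $(\gamma_i,\rho_i)\in\Gamma^i$ we have $\E(\phi|\Uc_i) = \int \phi\,d(\gamma_i,\rho_i)$, which is constant on $\gamma_i$. Since $\Gamma^i = \bigcup_y \Lc^i_y$ and each $\Lc^i_y$ arises from iterating the fully-crossing standard pair $(\gamma^y,\mathbf{1})$, I can write
\[
\int_{\tilde G^i} |\E(\phi|\Uc_i)|\, d\Leb = \int_{\T^1} \sum_{(\gamma_i,\rho_i)\in\Lc^i_y} c_{\gamma_i}\Big| \int \phi\, d(\gamma_i,\rho_i)\Big|\, dy \le \int_{\T^1} \sum_{(\gamma_i,\rho_i)\in\Lc^i_y} c_{\gamma_i}\Big| \int \phi\, d(\gamma_i,\rho_i)\Big|\, dy,
\]
where $c_{\gamma_i} = F^i_*(\gamma^y,\mathbf{1})(\gamma_i)$. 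Now each $(\gamma_i,\rho_i)$ is a fully-crossing standard pair, and $\phi$ has zero average, so Lemma~\ref{lem:oneStepDoC} applied to $(\gamma_i,\rho_i)$ with $n=0$ — that is, directly estimating $\int \phi\, d(\gamma_i,\rho_i) = \int_0^1 \phi(\rho_i - 1)\,dx = O(\|\phi\|_{C^0} L^{-1/2})$ via the distortion bound $\|d\log\rho_i/dx\|\le 3C_0$ from the definition of a standard pair — gives $|\int\phi\,d(\gamma_i,\rho_i)| = O(\|\phi\|_{C^0} L^{-1/2})$ uniformly. Summing, since $\sum_{(\gamma_i,\rho_i)\in\Lc^i_y} c_{\gamma_i} \le 1$ for each $y$ and $\Leb(\T^1)=1$, the good-set contribution is $O(\|\phi\|_{C^0} L^{-1/2})$.

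Combining the two estimates proves the lemma; the hypothesis $i < L^{1/4}$ enters only to keep the bad-set term $O((i-1)L^{-3/4})$ within $O(L^{-1/2})$. The main (and only mildly delicate) point is justifying the equidistribution bound $|\int\phi\,d(\gamma_i,\rho_i)| = O(\|\phi\|_{C^0}L^{-1/2})$ uniformly over all standard pairs: for a standard pair one has $|\rho_i - 1| = O(\|d\log\rho_i/dx\|) = O(1)$ only, which naively gives merely $O(\|\phi\|_{C^0})$, not $O(L^{-1/2})$. Here one must instead observe that $\int \phi\,(\rho_i-1)\,dx$ with $\phi$ mean-zero picks up cancellation: writing $\Phi(x) = \int_0^x \phi$, integration by parts gives $\int_0^1 \phi\,\rho_i\,dx = -\int_0^1 \Phi\,\dot\rho_i\,dx$, which is $O(\|\phi\|_{C^0}\|\dot\rho_i\|_{C^0})$ — but $\|\dot\rho_i\|_{C^0}$ is only $O(1)$, so this still does not immediately give $L^{-1/2}$. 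In fact the correct route, matching what is done inside the proof of Lemma~\ref{lem:oneStepDoC}, is that the standard pairs in $\Gamma^i = \Lc^i$ are themselves images under $F$ of the measure pairs one step earlier, so $\int\phi\,d(\gamma_i,\rho_i)$ is exactly of the form estimated in the proof of Lemma~\ref{lem:oneStepDoC} and the $O(\|\phi\|_{C^0}L^{-1/2})$ bound there applies directly; alternatively, one invokes Proposition~\ref{prop:docStdPair} with $n=1$, giving $\int\phi\circ F\,d(\gamma',\rho') = O(\|\phi\|_{C^0}L^{-1/2})$ for the predecessor pairs and hence the same bound after reindexing. I would phrase the argument using this last observation — that $\Gamma^i$ consists of curves of $\Lc$-type obtained by pushing forward, so Lemma~\ref{lem:oneStepDoC}/Proposition~\ref{prop:docStdPair} applies — to avoid reproving the cancellation estimate.
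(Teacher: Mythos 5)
Your bad-set estimate is fine, and the idea of reducing to a per-curve bound on $\tilde G^i$ is natural, but the claimed uniform per-curve bound $\bigl|\int\phi\,d(\gamma_i,\rho_i)\bigr| = O(\|\phi\|_{C^0}L^{-1/2})$ for every $(\gamma_i,\rho_i)\in\Gamma^i$ is a genuine gap, and none of the repairs you float actually close it. The definition of a standard pair only gives $\|d\log\rho_i/dx\|\le 3C_0 = O(1)$, hence $|\rho_i-1|=O(1)$ and the naive estimate $O(\|\phi\|_{C^0})$, as you yourself note. Neither Lemma~\ref{lem:oneStepDoC} nor Proposition~\ref{prop:docStdPair} rescues this: in the proof of Lemma~\ref{lem:oneStepDoC} the per-curve bound is $\int\phi\,d(\check\gamma,\check\rho)=O\bigl(\|\phi\|_{C^0}\,L^{-1}|x_{\check\gamma}-\tfrac14|^{-2}\bigr)$, which is as bad as $O(\|\phi\|_{C^0})$ for pieces whose $F$-preimage abuts $\partial\Sc_{1/2}$ (where $|x_{\check\gamma}-\tfrac14|\approx L^{-1/2}$). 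The $L^{-1/2}$ in that lemma comes only from the \emph{weighted sum} $\sum_{\check\gamma}c_{\check\gamma}E_{\check\gamma}$, not from any individual summand. Likewise Proposition~\ref{prop:docStdPair} controls the signed sum $\int\phi\circ F^n\,d(\gamma,\rho)=\sum c_{\gamma_n}\int\phi\,d(\gamma_n,\rho_n)+\cdots$, not $\sum c_{\gamma_n}\bigl|\int\phi\,d(\gamma_n,\rho_n)\bigr|$, which is what you would need after taking the absolute value pointwise in $\E|\E(\phi|\Uc_i)|$.

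The paper resolves this by changing the decomposition, not by a per-curve estimate over all of $\Gamma^i$. It defines a bad set $B_{i-1,i}=F(\Jc_{i-1})\cup\Ic_i\cup\Jc_i\cup\Ec_i$ of measure $O(L^{-1/2})$ (Proposition~\ref{prop:massEst}); crucially, this \emph{also} removes $F(\Jc_{i-1})$, the curves of $\Gamma^i$ whose one-step $F$-preimages are substandard pairs. On the complement, every atom $\gamma$ of $\Gamma^i$ has an $F$-preimage $\gamma'$ that is contained in a genuine standard pair (from $\Lc_{i-1}\cup\Ic_{i-1}$) and, by the construction in Lemmata~\ref{lem:cut-standard} and~\ref{lem:cut-full}, is disjoint from $\Sc_{1/2}$. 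One more application of Lemma~\ref{lem:distStdPair} to this preimage is then used to upgrade the density to $\rho = 1 + O(L^{-1/2})$, at which point the per-curve estimate $\int_0^1\phi\rho\,dx=O(\|\phi\|_{C^0}L^{-1/2})$ is immediate, no cancellation needed. That extra exclusion of $F(\Jc_{i-1})$ and the resulting one-step smoothing of the density are exactly what your proposal is missing: without them, curves coming from substandard preimages give only $O(1)$ distortion and the good-set integral is not $O(L^{-1/2})$.
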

\begin{proof}
  First observe that since $\Uc_0$ is the trivial $\sigma$-algebra (on
  each horizontal curve) we have $\E(\phi|\Uc_0) = \int_0^{1}\phi = 0$
  by assumption.  Hence, we can assume $i \ge 1$.  Let us denote by
  $B_{i-1, i}$ the union of $F(\Jc_{i-1})$, $\Ic_i$, $\Jc_i$, $\Ec_i$;
  here we use the shorthand $\Ic_i := \cup_y \Ic^i_y$, with
  $\Jc_i, \Ec_i$ defined analogously.

  According to Proposition~\ref{prop:massEst}, the set $B_{i-1, i}$ has Lebesgue
  measure $O(L^{-1/2})$; moreover, the complement of $B_{i-1,i}$ is a
  union of fully crossing curves $\gamma$ such that $F^{-1}(\gamma)$ is
  contained in either $\Ic_{i-1}$ or $\Lc_{i-1}$. Let $\rho$ be the
  density supported on $\gamma$, and $(\gamma', \rho')$ be such that
  $F_*(\gamma', \rho') = (\gamma, \rho)$. Since $(\gamma', \rho')$ is
  contained in a standard pair, we have
  $\|\frac{d}{dx}\log \rho'\| = O(1)$.  Moreover, due to the way $\Lc_i$
  is constructed (Lemma \ref{lem:cut-standard}, \ref{lem:cut-full}),
  $\gamma'$ is disjoint from the critical set $\Sc_{1/2}$. Then
  by Lemma~\ref{lem:distStdPair},
  $\|\frac{d}{dx}\log \rho\| = O(L^{-1/2})$ and
  $\rho = 1 + O(L^{-1/2})$. We conclude that on almost every point
  $(x, y) \in \T^2 \setminus B_{i-1, i}$, there exists a fully crossing
  standard pair $(\gamma, \rho)$ such that
  \[
    \E(\phi | \Uc_i)(x,y) =  \int_0^1 \phi(x') \rho(x') \, dx' = O(\| \phi \|_{C^0} L^{- 1/2}).
  \]
  Combined with the measure estimate for $B_{i-1, i}$, we conclude
  $\bE|\E(\phi | \Uc_i)| = O(\|\phi\|_{C^0}L^{-1/2})$.
\end{proof}
Lastly, for observables $\psi : \T^2 \to \R$ we recall the identities
\begin{align*}
\E(\psi\circ F_L | \Uc_n(L)) &= \E(\psi | F_L \Uc_n(L))\circ F_L\\
\E(\psi| \Uc_n(L))\circ F_L &= \E(\psi \circ F_L| F_L^{-1} \Uc_n(L)).
\end{align*}
which follow from the definition and will be used several times in the
sequel.
\subsection{Martingale difference approximation}


From this point on, an increasing function $N : \R_{>0} \to \N$ is fixed
for which the condition
\begin{equation}
\label{eq:N14}
N(L) \cdot L^{- \frac{1}{4}} \to 0 \quad \quad \text{ as } L \to \infty \, ,
\end{equation}
as in the hypotheses of Theorem \ref{thm:CLT}, is assumed to hold.  We
let $\phi : \T^1 \to \R$ be a $C^1$ observable with $\int \phi dx = 0$
and assume $\phi$ is not identically zero; in particular we have
$\int \phi^2 dx > 0$.

\medskip

We intend to approximate the Birkhoff sum $S_{N, L} \phi$ (defined
in~\eqref{e_birkhoffSum}) by $\tilde S_{N, L} \phi$, which we define
as:
\begin{align*}
\tilde S_{N, L} \phi &= \sum_{i = 1}^N \E(\phi \circ F_L^{i-1} | \Fc_i(L))
= \sum_{i = 1}^N \E(\phi | F_L^{-1} \Uc_{i}(L)) \circ F^{i-1}_L.
\end{align*}
\begin{lem}\label{lem:approxInProbability}
Under condition \eqref{eq:N14}, we have
$\frac{1}{\sqrt{N(L)}} |S_{N(L), L} \phi - \tilde S_{N(L), L} \phi |
\to 0$ in probability with respect to Lebesgue measure.
\end{lem}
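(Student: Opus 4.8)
The plan is to prove the stronger $L^{1}$ statement
\[
\frac{1}{\sqrt{N(L)}}\,\E\big|\,S_{N(L),L}\phi - \tilde S_{N(L),L}\phi\,\big| \;\longrightarrow\; 0 \qquad \text{as } L \to \infty \, ,
\]
from which convergence in probability follows by Markov's inequality. Subtracting the two defining sums and reindexing, one has $S_{N,L}\phi - \tilde S_{N,L}\phi = \sum_{i=1}^{N} d_i$ with $d_i := \phi\circ F^{i-1} - \E(\phi\circ F^{i-1}\,|\,\Fc_i)$. Since $\Fc_i = F^{-i}\Uc_i = F^{-(i-1)}\big(F^{-1}\Uc_i\big)$ and $F$ preserves $\Leb$, iterating the conditional-expectation identities recorded at the end of Section 4.1 yields $\E(\phi\circ F^{i-1}\,|\,\Fc_i) = \E(\phi\,|\,F^{-1}\Uc_i)\circ F^{i-1}$, hence $d_i = \big(\phi - \E(\phi\,|\,F^{-1}\Uc_i)\big)\circ F^{i-1}$; using $F$-invariance of $\Leb$ once more and composing with $F^{-1}$, one gets
\[
\E|d_i| = \E\big|\,\phi - \E(\phi\,|\,F^{-1}\Uc_i)\,\big| = \E\big|\,\phi\circ F^{-1} - \E(\phi\circ F^{-1}\,|\,\Uc_i)\,\big| \, .
\]

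The next step is to estimate $\E|d_i|$ using Lemma~\ref{lem:filtration}. On the set $\tilde G^i$, the $\sigma$-algebra $\Uc_i$ is generated by the u-curves of $\Gamma^i$, and by Lemma~\ref{lem:representCondExpectation} the conditional expectation $\E(\phi\circ F^{-1}\,|\,\Uc_i)$ coincides on each $\gamma_i \in \Gamma^i$ with the $\rho_i$-average of $\phi\circ F^{-1}$ along $\gamma_i$; this average differs pointwise from $\phi\circ F^{-1}$ by at most the oscillation of $\phi$ over $F^{-1}(\gamma_i)$, a set of diameter $\leq L^{-1/2}$ by Lemma~\ref{lem:filtration}(b), hence by at most $\|\phi\|_{C^1} L^{-1/2}$ (as the $C^1$, $x$-dependent observable $\phi$ is $\|\phi\|_{C^1}$-Lipschitz). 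On the complement $\T^2 \setminus \tilde G^i$, whose Lebesgue measure equals $\Leb B^i = O((i-1)L^{-3/4} + L^{-1/2})$ by Lemma~\ref{lem:filtration}(a) together with the $F$-invariance of $\Leb$, the integrand is crudely bounded by $2\|\phi\|_{C^0}$. Altogether,
\[
\E|d_i| = O\!\Big(\|\phi\|_{C^1}\big((i-1)L^{-3/4} + L^{-1/2}\big)\Big) \, .
\]

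Summing over $1 \leq i \leq N = N(L)$,
\[
\frac{1}{\sqrt N}\,\E\big|\,S_{N,L}\phi - \tilde S_{N,L}\phi\,\big| \;\leq\; \frac{1}{\sqrt N}\sum_{i=1}^{N}\E|d_i| \;=\; O\!\Big(\|\phi\|_{C^1}\big(N^{3/2}L^{-3/4} + \sqrt N\,L^{-1/2}\big)\Big) \, .
\]
Writing $N^{3/2}L^{-3/4} = \big(NL^{-1/2}\big)^{3/2}$ and $\sqrt N\,L^{-1/2} = \big(NL^{-1}\big)^{1/2}$, and noting that the hypothesis \eqref{eq:N14} (i.e.\ $N(L)L^{-1/4}\to 0$) forces both $NL^{-1/2} = (NL^{-1/4})\,L^{-1/4}\to 0$ and $NL^{-1} = (NL^{-1/4})\,L^{-3/4}\to 0$, the right-hand side tends to $0$. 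This establishes $L^{1}$ convergence, hence convergence in probability, completing the proof.

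I do not anticipate a genuine obstacle: the estimate is short, and all of its weight rests on Lemma~\ref{lem:filtration}, established above. The only points requiring care are the bookkeeping identity $d_i = \big(\phi - \E(\phi\,|\,F^{-1}\Uc_i)\big)\circ F^{i-1}$ — which relies on the Lebesgue-preserving conditional-expectation identities — and the observation that the $N^2$-type accumulation of error coming from the sets $B^i$ is nonetheless controlled after dividing by $\sqrt N$, precisely because $N$ grows more slowly than $L^{1/4}$.
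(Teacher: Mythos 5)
Your proof is correct and follows essentially the same route as the paper: both rely on the diameter bound from Lemma~\ref{lem:filtration}(b) to control the $i$-th error term on the good set $G^i$, and on the measure estimate of Lemma~\ref{lem:filtration}(a) for the complement. The only cosmetic difference is that you package the two estimates as an explicit $L^1$ bound (which Markov then converts to convergence in probability), whereas the paper bounds the difference uniformly on $\bigcap_n G^n$ and separately shows the complement has vanishing measure.
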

In particular, the convergence in distribution of
$\frac{1}{\sqrt{N(L)}} \tilde S_{N(L), L} \phi $ to a centered Gaussian
$\Nc(0, \sigma^2)$ is equivalent to the convergence in distribution of
$\frac{1}{\sqrt{N(L)}}S_{N(L), L} \phi$ to the same law $\Nc(0, \sigma^2)$.
\begin{proof}
For the sake of readability, in the following proof we drop the “$L$”
and write $S_N = S_{N(L), L} \phi$,
$\tilde S_N = \tilde S_{N(L), L} \phi$, $\Uc_i = \Uc_i(L)$, $N = N(L)$
and $F = F_L$.

\medskip

We start by examining the $i$-th summand of $\tilde S_N$, \ie $\E(\phi
\circ F^{-1} | \Uc_i)$.  If we evaluate the conditional expectation on
some point of $\tilde G_n$, Lemma~\ref{lem:representCondExpectation} provides
\[
\E(\phi \circ F^{-1} | \Uc_i) = \int \phi \circ F^{-1}(x, h_{\gamma_i}(x)) \, \rho_{i}(x) dx
\]
when the left-hand side is evaluated on the (fully crossing) standard
pair $(\gamma_i, \rho_i) \in \Gamma^i$.  Fixing $(\gamma_i, \rho_i)$,
let $(\gamma_{i-1}, \rho_{i-1}) \in \Gamma^{i-1}$ be such that
$\gamma_i \subset F(\gamma_{i-1})$.  Observe that $f_{\gamma_{i-1}}$
maps some interval $\tilde I_{\gamma_i}$ diffeomorphically onto
$[0,1)$. By the change of variables formula,
\begin{align*}
\int \phi \circ F^{-1}(x, h_{\gamma_i}(x)) \, \rho_{i}(x) dx =
\frac{1}{\int_{\tilde I_{\gamma_{i}}} \rho_{{i-1}} dx} \int_{\tilde
I_{\gamma_{i}}} \phi(x) \, \rho_{i-1}(x) dx
\end{align*}
By Lemma~\ref{lem:filtration}(b), the length of $\tilde I_{\gamma_{i}}$
is $\leq L^{- 1/2}$, and so for $(x, y) \in F^{-1}(\gamma_i)$ the right
hand side above equals $\phi(x) + O(\| \phi \|_{C^1} L^{- 1/2})$. Thus
\begin{align}\label{eq:smallAtomsAverage}
\E(\phi | F^{-1} \Uc_i) = \phi(x) + O(\| \phi \|_{C^1} L^{- \frac12}) \quad \text{ on } F^{-1}\tilde G^i \, .
\end{align}
We conclude that
$\frac{|\tilde S_N - S_N|}{\sqrt{N}} \leq \sqrt N L^{- 1/2} \| \phi
\|_{C^1}$ holds on $\bigcap_{n = 1}^N G^n$. By \eqref{eq:N14}, the
quantity on the right hand side goes to $0$ as $L \to \infty$.

To complete the proof of convergence in probability, it suffices to show
that $\P(\bigcup_{n = 1}^N B^n)$ converges to $0$ as $L \to \infty$. For this, from
the estimate in Lemma \ref{lem:filtration} (a) we have
$\P(\bigcup_{n=1}^N B^n) = O(N^2 L^{- 3/4} + N L^{- 1/2})$, which also goes
to $0$ as $L \to \infty$ under \eqref{eq:N14}.
\end{proof}

\subsubsection{Representation of $\tilde S_{N, L}$ as a sum of
martingale differences}

In the next lemma, we represent $\tilde S_N$ as a sum of the form
$\tilde S_N = \sum_{i = 1}^N U_i$, where the $U_i = U_i(L)$ are
martingale differences with respect to the filtration
$(\Fc_i(L))_i$. Below, we use the convention
$\Fc_0 = \{ \T^2, \emptyset\}$.
\begin{lem}
Fix $L$ and define
\[
U_i = \sum_{m = i}^N \bigg( \E(\phi \circ F^{m-1} | \Fc_i) - \E(\phi \circ F^{m-1} | \Fc_{i - 1}) \bigg)  \, .
\]
\begin{itemize}
	\item[(a)] The sequence $(U_i)_{i = 1}^N$ is a martingale difference, i.e., each $U_i$ is $\Fc_i$-measurable and
	$\E(U_i | \Fc_{i-1}) = 0$ for all $1 \leq i \leq N$; and
	\item[(b)] we have $\tilde S_{N, L} = \sum_{i = 1}^N U_i$.
\end{itemize}
\end{lem}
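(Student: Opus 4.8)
The plan is to carry out the classical Gordin-type martingale decomposition; both assertions are purely formal consequences of the tower property of conditional expectation, together with two facts already recorded: $(\Fc_i)_i$ is an increasing filtration (so $\Fc_{i-1}\subset\Fc_i$ for every $i$), and $\Leb$ is $F_L$-invariant with $\int\phi\,d\Leb=\int\phi\,dx=0$. All conditional expectations below are well-defined since $\phi$, hence each $\phi\circ F^{m-1}$, is bounded and measurable; I will not need the explicit representation of conditional expectations (Lemma~\ref{lem:representCondExpectation}), only the abstract properties of conditioning.

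For part (a), I would first observe that each summand $\E(\phi\circ F^{m-1}\mid\Fc_i)-\E(\phi\circ F^{m-1}\mid\Fc_{i-1})$ is $\Fc_i$-measurable: the first term is $\Fc_i$-measurable by definition, and the second is $\Fc_{i-1}$-measurable, hence $\Fc_i$-measurable since $\Fc_{i-1}\subset\Fc_i$; therefore $U_i$ is $\Fc_i$-measurable. To see $\E(U_i\mid\Fc_{i-1})=0$, I apply $\E(\,\cdot\mid\Fc_{i-1})$ to each summand: the tower property together with $\Fc_{i-1}\subset\Fc_i$ gives $\E\big(\E(\phi\circ F^{m-1}\mid\Fc_i)\mid\Fc_{i-1}\big)=\E(\phi\circ F^{m-1}\mid\Fc_{i-1})$, while $\E(\phi\circ F^{m-1}\mid\Fc_{i-1})$ is already $\Fc_{i-1}$-measurable and so equals its own conditional expectation given $\Fc_{i-1}$; the two cancel, and summing over $m$ gives $\E(U_i\mid\Fc_{i-1})=0$.

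For part (b), I would interchange the order of summation,
\begin{align*}
\sum_{i=1}^N U_i&=\sum_{i=1}^N\sum_{m=i}^N\big(\E(\phi\circ F^{m-1}\mid\Fc_i)-\E(\phi\circ F^{m-1}\mid\Fc_{i-1})\big)\\
&=\sum_{m=1}^N\sum_{i=1}^m\big(\E(\phi\circ F^{m-1}\mid\Fc_i)-\E(\phi\circ F^{m-1}\mid\Fc_{i-1})\big),
\end{align*}
and observe that for each fixed $m$ the inner sum telescopes to $\E(\phi\circ F^{m-1}\mid\Fc_m)-\E(\phi\circ F^{m-1}\mid\Fc_0)$. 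Since $\Fc_0=\{\T^2,\emptyset\}$ is the trivial $\sigma$-algebra, $\E(\phi\circ F^{m-1}\mid\Fc_0)=\int\phi\circ F^{m-1}\,d\Leb=\int\phi\,d\Leb=0$ by $F_L$-invariance of $\Leb$ and the mean-zero hypothesis on $\phi$. Hence $\sum_{i=1}^N U_i=\sum_{m=1}^N\E(\phi\circ F^{m-1}\mid\Fc_m)=\tilde S_{N,L}\phi$, the last equality being the definition of $\tilde S_{N,L}\phi$.

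There is no genuine obstacle in this lemma; the only point requiring a moment's care is the boundary term at $i=0$ in the telescoping sum, which is dealt with precisely by the convention $\Fc_0=\{\T^2,\emptyset\}$ together with $\int\phi\,dx=0$ — this is exactly where the mean-zero normalization of $\phi$ enters. One could equivalently note that the above identities amount to $U_i=\E(\tilde S_{N,L}\phi\mid\Fc_i)-\E(\tilde S_{N,L}\phi\mid\Fc_{i-1})$ (using that $\tilde S_{N,L}\phi$ is $\Fc_N$-measurable with zero $\Leb$-mean), which makes both (a) and (b) transparent, but the direct verification above is shortest.
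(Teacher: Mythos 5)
Your proof is correct and amounts to the same computation as the paper's. Part (a) is exactly the tower-property argument the paper waves through as ``obvious.'' For part (b), you swap the order of summation and telescope the inner sum in $i$ for each fixed $m$, using $\Fc_0$ trivial together with $\int\phi\,dx=0$ to kill the boundary term; the paper instead peels off the diagonal $m=i$ terms (which give $\tilde S_{N,L}\phi$) and shows the two remaining off-diagonal double sums coincide after reindexing, again invoking $\Fc_0$ trivial and $\int\phi=0$ to drop the $i=1$ term. These are two presentations of the identical algebraic rearrangement; your telescoping version is the slightly more transparent of the two.
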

\begin{proof}
Item (a) is obvious. For (b), we compute:
\begin{align*}
\sum_{i = 1}^N U_i & = \sum_{i = 1}^N \sum_{m = i}^N \bigg( \E(\phi \circ F^{m-1} | \Fc_i) - \E(\phi \circ F^{m-1} | \Fc_{i-1}) \bigg) \\
& = \underbrace{\sum_{i = 1}^N \E(\phi \circ F^{i-1} | \Fc_i)}_{= \tilde S_N} + \underbrace{\sum_{i = 1}^N  \sum_{m = i + 1}^N \E(\phi \circ F^{m-1} | \Fc_i)}_{I}- \underbrace{ \sum_{i = 1}^N \sum_{m = i}^N \E(\phi \circ F^{m-1} | \Fc_{i-1}) }_{II}
\end{align*}

For the $I$ term, the $i = N$ summand is empty, and so
\[
I = \sum_{i = 1}^{N-1} \sum_{m = i + 1}^N \E(\phi \circ F^{m-1} | \Fc_i)
\]
For the $II$ term, the $i = 1$ summand is zero since $\Fc_0$ is the trivial $\sigma$-algebra. On replacing $i \mapsto i + 1$,
\[
II = \sum_{i = 2}^N \sum_{m = i}^N \E(\phi \circ F^{m-1} | \Fc_{i-1}) = \sum_{i = 1}^{N-1} \sum_{m = i + 1}^N \E(\phi \circ F^{m-1} | \Fc_i)
\]
and so $I = II$. We conclude $\sum_{i = 1}^N U_i = \tilde S_N$.
\end{proof}
\subsubsection{Asymptotic estimate for $U_i$}

Before continuing, we give the following asymptotic estimate on the $U_i$.

\begin{prop}\label{prop:asymptUi}
For each $1 \le i \le N$, the function
\[
V_i = U_i -  \phi \circ F^{i-1}
\]
satisfies $V_i = O(N \| \phi\|_{C^0})$ and
$\bE|V_i| = O( \| \phi \|_{C^1} N L^{-\frac12})$.
\end{prop}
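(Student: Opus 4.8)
The plan is to compare $U_i$ with $\phi\circ F^{i-1}$ one summand at a time. Write $U_i = \sum_{m=i}^N W_{i,m}$, where
\[
W_{i,m} := \E(\phi\circ F^{m-1}\,|\,\Fc_i) - \E(\phi\circ F^{m-1}\,|\,\Fc_{i-1}),
\]
so that $V_i = \big(W_{i,i} - \phi\circ F^{i-1}\big) + \sum_{m=i+1}^N W_{i,m}$. The pointwise bound $V_i = O(N\|\phi\|_{C^0})$ is immediate: $U_i$ is a sum of at most $2N$ conditional expectations of functions $\phi\circ F^{m-1}$, each bounded in modulus by $\|\phi\|_{C^0}$, and $|\phi\circ F^{i-1}|\le\|\phi\|_{C^0}$. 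Hence for the $L^1$ bound it suffices to prove $\E|W_{i,i}-\phi\circ F^{i-1}| = O(\|\phi\|_{C^1}L^{-1/2})$ and $\E|W_{i,m}| = O(\|\phi\|_{C^0}L^{-1/2})$ for each $i<m\le N$; summing these at most $N$ estimates yields $\E|V_i| = O(\|\phi\|_{C^1}NL^{-1/2})$. Throughout we use that $F$ preserves $\Leb$ (so composition with a power of $F$ leaves $L^1$ norms unchanged) and, for a sub-$\sigma$-algebra $\mathcal G$ and $j\ge 0$, the identity $\E(\psi\,|\,F^{-j}\mathcal G) = \E(\psi\circ F^{-j}\,|\,\mathcal G)\circ F^{j}$, obtained by iterating the conditional-expectation identities recorded above.

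For the term $W_{i,i}$: by the identity just mentioned, $\E(\phi\circ F^{i-1}\,|\,\Fc_i) = \E(\phi\,|\,F^{-1}\Uc_i)\circ F^{i-1}$, so \eqref{eq:smallAtomsAverage} gives $\E(\phi\circ F^{i-1}\,|\,\Fc_i) = \phi\circ F^{i-1} + O(\|\phi\|_{C^1}L^{-1/2})$ on $G^i = F^{-i}\tilde G^i$, while on $B^i = \T^2\setminus G^i$ the difference is bounded by $2\|\phi\|_{C^0}$ and $\Leb B^i = O((i-1)L^{-3/4}+L^{-1/2})$ by Lemma~\ref{lem:filtration}(a); since $i\le N$ and $(i-1)L^{-3/4}\le (NL^{-1/4})L^{-1/2} = o(L^{-1/2})$ by \eqref{eq:N14}, we obtain $\E|\E(\phi\circ F^{i-1}\,|\,\Fc_i) - \phi\circ F^{i-1}| = O(\|\phi\|_{C^1}L^{-1/2})$. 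On the other hand $\E(\phi\circ F^{i-1}\,|\,\Fc_{i-1}) = \E(\phi\,|\,\Uc_{i-1})\circ F^{i-1}$ (interpreting $\Fc_0$ via the stated convention, where this term is just $\int\phi\,dx=0$), and since $\int\phi\,dx=0$ and $i-1<L^{1/4}$ for $L$ large, Lemma~\ref{l_conditionalEstimate} gives $\E|\E(\phi\circ F^{i-1}\,|\,\Fc_{i-1})| = \E|\E(\phi\,|\,\Uc_{i-1})| = O(\|\phi\|_{C^0}L^{-1/2})$. Combining, $\E|W_{i,i}-\phi\circ F^{i-1}| = O(\|\phi\|_{C^1}L^{-1/2})$.

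For $i<m\le N$ we bound the two pieces of $W_{i,m}$ separately. By the same identity, $\E(\phi\circ F^{m-1}\,|\,\Fc_i) = \E(\phi\circ F^{m-1-i}\,|\,\Uc_i)\circ F^i$. If $m-1-i\ge 1$, then on each fully-crossing standard pair $(\gamma_i,\rho_i)\in\Gamma^i$ Lemma~\ref{lem:representCondExpectation} identifies $\E(\phi\circ F^{m-1-i}\,|\,\Uc_i)$ with $\int\phi\circ F^{m-1-i}\,d(\gamma_i,\rho_i) = O(\|\phi\|_{C^0}((m-i-2)L^{-3/4}+L^{-1/2}))$ by the equidistribution estimate Proposition~\ref{prop:docStdPair} (this is where $\int\phi\,dx=0$ is used), while off $\tilde G^i$, a set of measure $\Leb B^i = O((i-1)L^{-3/4}+L^{-1/2})$, it is bounded by $\|\phi\|_{C^0}$; if $m=i+1$, instead, $\E(\phi\circ F^{m-1-i}\,|\,\Uc_i) = \E(\phi\,|\,\Uc_i)$ and Lemma~\ref{l_conditionalEstimate} applies directly. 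In either case, absorbing all $L^{-3/4}$ contributions into $L^{-1/2}$ via \eqref{eq:N14}, we get $\E|\E(\phi\circ F^{m-1}\,|\,\Fc_i)| = O(\|\phi\|_{C^0}L^{-1/2})$; the same reasoning applied with $\Fc_{i-1}$ in place of $\Fc_i$ (where the relevant exponent is $m-i\ge 1$, so Proposition~\ref{prop:docStdPair} always applies, and for $i=1$ the term simply vanishes) gives $\E|\E(\phi\circ F^{m-1}\,|\,\Fc_{i-1})| = O(\|\phi\|_{C^0}L^{-1/2})$, whence $\E|W_{i,m}| = O(\|\phi\|_{C^0}L^{-1/2})$, completing the estimate. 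The main obstacle is precisely this last step: one must peel the power $F^i$ off the conditioning to reduce $\E(\phi\circ F^{m-1}\,|\,\Fc_i)$ to a conditional expectation over $\Uc_i$ (or $\Uc_{i-1}$) of a Birkhoff-shifted observable before the standard-pair equidistribution estimate can be invoked, while keeping track that the boundary case $m=i+1$ is governed instead by Lemma~\ref{l_conditionalEstimate} and that the small but nonzero measure of the ``bad'' sets $B^i$ is negligible next to $L^{-1/2}$ only thanks to the growth restriction \eqref{eq:N14} on $N$.
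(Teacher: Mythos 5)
Your proof is correct and follows essentially the same route as the paper's: both decompose $V_i$ into the same collection of conditional-expectation terms, peel the powers of $F$ off the filtration to reduce to conditional expectations with respect to $\Uc_i$ or $\Uc_{i-1}$, and then invoke, respectively, \eqref{eq:smallAtomsAverage} plus the bad-set bound (for the $m=i$ comparison), Lemma~\ref{l_conditionalEstimate} (for the zero-shift terms), and Proposition~\ref{prop:docStdPair} (for the positively shifted terms). The only difference is a cosmetic regrouping — you organize the sum by $m$ and treat $W_{i,i}-\phi\circ F^{i-1}$ and each $W_{i,m}$ separately, whereas the paper collects the same pieces into groups (a), (b), (c) — but the estimates and lemmas used are identical.
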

\begin{proof}
We expand
\begin{align*}
V_i & =  \underbrace{\E(\phi | F^{-1} \Uc_i ) \circ F^{i-1} - \phi \circ F^{i-1}}_{(a)}
+ \underbrace{\E(\phi | \Uc_i) \circ F^i- \E(\phi | \Uc_{i-1}) \circ F^{i-1}}_{(b)} \\
& + \underbrace{\sum_{j = 1}^{N-i-1} \E(\phi \circ F^j | \Uc_i) \circ F^i - \sum_{j = 1}^{N-i} \E(\phi \circ F^j | \Uc_{i-1}) \circ F^{i-1}}_{(c)}
\end{align*}
Clearly $|V_i| = |U_i - \phi\circ F^{i-1}| = O(N \|\phi\|_{C^0})$, and
so we are left only to show the second bound.

In the estimates below, we make liberal use of the fact that under
\eqref{eq:N14}, we have $N L^{- 3/4} = o(L^{- 1/2})$, hence the term
$O(N L^{- 3/4} + L^{- 1/2})$ appearing in the error estimate for
Proposition \ref{prop:docStdPair} can be written $O(L^{- 1/2})$.

\noindent\textbf{Term (a):}
From \eqref{eq:smallAtomsAverage},
\begin{align*}
  | \phi \circ F^{i-1} - \E(\phi | F^{-1}\Fc_i)\circ F^{i-1}  | = O(\| \phi\|_{C^1} L^{- 1/2}) \text{ holds on } G^i.
\end{align*}
The component on $B^i$ has expectation
$O(\| \phi \|_{C^0} L^{-\frac12})$, since $\leb(B^i) = O(L^{-\ifrac12})$
by Lemma~\ref{lem:filtration}(a).  In total,
$\E | (a)| = O ( \| \phi \|_{C^1} L^{- 1/2}) $.

\bigskip
\ignore{\noindent {\bf Term (b): } 
Evaluating at $(\gamma_i, \rho_i) \in \Gamma^i$, we have
\notej{I do not understand the following estimate: why is it small?
  It seems it should be the same estimate as in (c), but it does not work
since there is no composition with $F$. What am I not understanding?}
\[
  \E(\phi | \Uc_i) =  \int_0^1 \phi \cdot \rho_i \, dx = O(\| \phi \|_{C^0} L^{- 1/2})
\]
by Proposition~\ref{prop:docStdPair}. Similarly, when evaluated at
$(\gamma_{i-1},\rho_{i-1}) \in \Gamma^{i-1}$, we have
\[
\E(\phi | \Uc_{i-1}) = O(\| \phi \|_{C^0} L^{ -1/2} )  \, .
\]
Since $\P(B^i), \P(B^{i-1})$ are each $O( L^{-\frac12})$, we obtain
$\E |(b)| = O( | \phi \|_{C^0} L^{- 1/2})$.
\bigskip
}
\noindent{\bf Term (b)}: by Lemma~\ref{l_conditionalEstimate} we
conclude
$\bE \big( |\E(\phi | \Uc_i)| \circ F^i \big) = \bE|\E(\phi | \Uc_i)| =
O(\|\phi\|_{C^0}L^{-1/2})$. The term
$\E(\phi | \Uc_{i-1}) \circ F^{i-1}$ of course satisfies identical estimates.


\bigskip

\noindent {\bf Term (c):} Evaluating at $(\gamma_i, \rho_i) \in \Gamma^i$, we have
\[
\E( \phi \circ F^j | \Uc_i)  = \int \phi \circ F^j \, d (\gamma_i, \rho_{i}) = O\big(\| \phi \|_{C^0}  L^{-\frac12} \big)
\]
by Lemma \ref{lem:representCondExpectation} and Proposition \ref{prop:docStdPair}. Similarly, when evaluated at $\gamma_{i-1} \in \Gamma^{i-1}$,
\[
\E(\phi \circ F^j | \Uc_{i-1}) = O\big( \| \phi \|_{C^0}  L^{-\frac12} \big) \, .
\]
The expectations on the bad sets $B^i, B^{i-1}$ are again $O(\| \phi \|_{C^0} L^{-\frac12})$. Since there are at most $N$ such terms, we have $\E|(c)| = O(\| \phi \|_{C^0} N L^{-\frac12})$. Summing (a), (b), (c) completes the proof.
\end{proof}

\begin{cor}\label{cor:U2}
For $1 \le i \le N$, the function
\[
W_i = U_i^2 - \phi^2 \circ F^{i-1}
\]
satisfies $W_i = O(N^2 \| \phi \|_{C^0}^2)$ and $\bE|W_i| = O(\| \phi \|_{C^1}^2 N^2 L^{-\frac12})$.
\end{cor}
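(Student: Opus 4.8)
The plan is to reduce everything to Proposition~\ref{prop:asymptUi} via the factorization
\[
W_i = U_i^2 - (\phi\circ F^{i-1})^2 = \big(U_i - \phi\circ F^{i-1}\big)\big(U_i + \phi\circ F^{i-1}\big) = V_i\cdot\big(V_i + 2\,\phi\circ F^{i-1}\big),
\]
where $V_i = U_i - \phi\circ F^{i-1}$ is exactly the function studied in Proposition~\ref{prop:asymptUi}. The idea is that one factor, namely $V_i$, carries the small $L^{-1/2}$ gain, while the other factor is only controlled by the crude uniform bound; the product then picks up a single factor of $L^{-1/2}$ and an extra factor of $N$, which is precisely what the statement asserts.

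First I would record the pointwise bounds. Proposition~\ref{prop:asymptUi} gives $V_i = O(N\|\phi\|_{C^0})$, and trivially $\phi\circ F^{i-1} = O(\|\phi\|_{C^0})$; hence $U_i = V_i + \phi\circ F^{i-1} = O(N\|\phi\|_{C^0})$, and in particular $V_i + 2\,\phi\circ F^{i-1} = O(N\|\phi\|_{C^0})$ as well. Multiplying the two factors in the displayed identity immediately yields the pointwise estimate $W_i = O(N^2\|\phi\|_{C^0}^2)$.

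For the $L^1$ bound I would combine the uniform control of one factor with the $L^1$ control of the other. Using $\|V_i + 2\,\phi\circ F^{i-1}\|_{C^0} = O(N\|\phi\|_{C^0})$ together with $\bE|V_i| = O(\|\phi\|_{C^1}\,N\,L^{-1/2})$ from Proposition~\ref{prop:asymptUi}, we get
\[
\bE|W_i| \le \big\|V_i + 2\,\phi\circ F^{i-1}\big\|_{C^0}\cdot \bE|V_i| = O(N\|\phi\|_{C^0})\cdot O(\|\phi\|_{C^1}\,N\,L^{-1/2}) = O(\|\phi\|_{C^1}^2\, N^2\, L^{-1/2}),
\]
where in the last step one uses $\|\phi\|_{C^0}\le \|\phi\|_{C^1}$. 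This completes the proof.

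There is no genuine obstacle here; the only point worth flagging is that one should \emph{not} attempt to extract an $L^{-1/2}$ gain from both factors simultaneously. The factor $U_i + \phi\circ F^{i-1}$ (equivalently $V_i + 2\,\phi\circ F^{i-1}$) is a sum involving $\phi\circ F^{i-1}$ itself, which is $O(\|\phi\|_{C^0})$ pointwise but has no smallness, so only the crude $O(N\|\phi\|_{C^0})$ bound is available for it. Since the corollary only claims an $L^{-1/2}$ (not $L^{-1}$) decay for $\bE|W_i|$, this single gain from the $V_i$ factor is exactly what is needed.
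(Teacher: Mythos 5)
Your proof is correct and follows essentially the same route as the paper: both rely entirely on Proposition~\ref{prop:asymptUi} and on the elementary identity $W_i = V_i^2 + 2(\phi\circ F^{i-1})V_i$ (which is your factorization $W_i = V_i(V_i + 2\phi\circ F^{i-1})$ multiplied out). The paper bounds the two summands $\bE(V_i^2)$ and $\bE(2(\phi\circ F^{i-1})V_i)$ separately, noting the latter is actually $O(\|\phi\|_{C^1}^2 N L^{-1/2})$ and the dominant $N^2$ comes from $\bE(V_i^2)\le\sup|V_i|\cdot\bE|V_i|$; you merge them into a single H\"older-type estimate on the product, arriving at the same bound.
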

\begin{proof}
  The estimate $W_i = O(N^2\|\phi\|_{C^{0}})$ is straightforward and
  left to the reader.  In order to estimate $\E|W_i|$, observe that
  \begin{align*}
    W_i =  2 (\phi \circ F^{i-1}) V_i + V_i^2 \, .
  \end{align*}
  Then, from Proposition~\ref{prop:asymptUi} we estimate
  $\bE( 2 (\phi \circ F^{i-1}) V_i) \le 2\|\phi\|_{C^0} \bE(|V_i|) =
  O(\|\phi\|_{C^1}^2 N L^{-\frac12})$, and
  $\bE(V_i^2) \le \sup(|V_i|) \bE(|V_i|) = O( \| \phi \|_{C^1}^2 N^2
  L^{-\frac12})$.
\end{proof}

\subsection{Central Limit Theorem for the martingale approximation}
Lemma \ref{lem:approxInProbability} reduces Theorem \ref{thm:CLT} to
verifying the same Central Limit Theorem for
$\frac{1}{\sqrt{N(L)}} \tilde S_{N(L), L} \phi $ as $L \to \infty$.  We will
obtain this using the following result due to McLeish.

\begin{thm}[\cite{McL1974}]\label{thm:mcl}
  Let $(\Omega, \Fc, \P)$ be a probability space. Let
  $\{ k_n\}_{n \geq 1}$, be an increasing sequence of whole numbers
  tending to infinity, and for each $n \geq 1$, let
  $\Fc_{1, n} \subset \Fc_{2, n} \subset \cdots \subset \Fc_{k_n, n}
  \subset \Fc$ be an increasing sequence of sub-$\sigma$ algebras of
  $\Fc$.   For each such $n, i$, let $X_{i, n}$ be a random variable,
  measurable with respect to $\Fc_{i, n}$, for which
  $\E(X_{i, n} | \Fc_{i-1, n}) = 0$, and write
  $Z_n = \sum_{1 \leq i \leq k_n} X_{i, n}$. Assume
\begin{itemize}
	\item[(M1)] $\max_{i \leq k_n} |X_{i, n}|$ is uniformly bounded, in $n$, in the $L^2$ norm;
	\item[(M2)] $\max_{i \leq k_n} |X_{i, n}| \to 0$ in probability as $n \to \infty$; and
	\item[(M3)] $\sum_{i = 1}^{k_n} X_{i, n}^2 \to 1$ in probability as
      $n \to \infty$.
\end{itemize}
Then, $Z_n$ converges weakly to a standard Gaussian.
\end{thm}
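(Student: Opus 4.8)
The statement is the martingale-array central limit theorem of McLeish; it is purely probabilistic and can be proved independently of the dynamics of the preceding sections. The plan is to work with characteristic functions: by the L\'evy continuity theorem it suffices to show $\E[e^{i t Z_n}] \to e^{-t^2/2}$ for each fixed $t \in \R$. The device that makes this tractable is the exact factorization, valid for every real $x$,
\[
e^{ix} = (1 + ix)\exp\!\Big(-\tfrac{x^2}{2} + r(x)\Big), \qquad r(x) := ix - \log(1+ix) + \tfrac{x^2}{2},
\]
where, with the principal branch of the logarithm, $r(x) = O(|x|^3)$ as $x \to 0$ and $|r(x)| = O(x^2)$ for all $x$. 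Substituting $x = t X_{j,n}$ and multiplying over $1 \le j \le k_n$ converts the sum $Z_n$ into a product: $e^{i t Z_n} = T_n \cdot R_n$, where $T_n := \prod_{j=1}^{k_n}(1 + i t X_{j,n})$ and $R_n := \exp\big(-\tfrac{t^2}{2}\sum_{j} X_{j,n}^2 + \sum_j r(t X_{j,n})\big)$.

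I would then establish four points. First, that $R_n \to e^{-t^2/2}$ in probability: condition (M3) gives $\sum_j X_{j,n}^2 \to 1$ in probability, so it remains to see $\sum_j r(t X_{j,n}) \to 0$ in probability, which follows by restricting to the event $\{\max_j |X_{j,n}| \le \delta\}$ (of probability $\to 1$ by (M2)), where the cubic bound on $r$ gives $\big|\sum_j r(t X_{j,n})\big| \le C_t \big(\max_j|X_{j,n}|\big)\big(\sum_j X_{j,n}^2\big) \to 0$ by (M2) and (M3). Second, that $\E[T_n] = 1$ for every $n$ (the integrability needed here being supplied by the third point): expanding the product and conditioning successively on $\Fc_{k_n-1,n}, \Fc_{k_n-2,n},\dots$, the martingale-difference property $\E(X_{j,n}\mid\Fc_{j-1,n})=0$ annihilates every term of the expansion except the constant $1$. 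Third, that $\{T_n\}$ is uniformly integrable; since $|T_n|^2 = \prod_j(1+t^2X_{j,n}^2)$, this reduces to bounding $\E\big[\prod_j(1+t^2X_{j,n}^2)\big]$ uniformly in $n$. Fourth, combining these: from $\E[e^{itZ_n}] - e^{-t^2/2} = \E\big[T_n(R_n - e^{-t^2/2})\big] + e^{-t^2/2}\big(\E[T_n]-1\big)$, the last term vanishes by the second point, while the first tends to $0$ because $R_n - e^{-t^2/2} \to 0$ in probability, $|T_n(R_n - e^{-t^2/2})| \le |e^{itZ_n}| + e^{-t^2/2}|T_n| = 1 + e^{-t^2/2}|T_n|$, and $\{T_n\}$ is uniformly integrable (so the contribution of the low-probability event $\{|R_n - e^{-t^2/2}| > \epsilon\}$ is negligible, while the rest is $\le \epsilon\,\E|T_n| = O(\epsilon)$). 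This identifies the limiting characteristic function as that of $\Nc(0,1)$ and finishes the proof.

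I expect the uniform-integrability step (the third point) to be the main obstacle: the naive bound $\prod_j(1+t^2X_{j,n}^2) \le \exp\!\big(t^2\sum_j X_{j,n}^2\big)$ is not strong enough, since (M3) controls $\sum_j X_{j,n}^2$ only in probability. The remedy is a further telescoping argument --- truncating each $X_{j,n}$ at level $\delta$, estimating $\E(X_{j,n}^2 \mid \Fc_{j-1,n})$, and peeling off the factors one at a time from the top --- using (M1) to control the untruncated remainders and (M2) to ensure the truncation is harmless. This is the technical heart of McLeish's argument; everything else is routine bookkeeping with characteristic functions.
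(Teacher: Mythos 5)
The paper does not prove this theorem: it is stated as a citation to McLeish and applied as a black box in Section~4.3, so there is no in-paper argument to compare against; your proposal has to be judged on its own.

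What you write out is indeed the skeleton of McLeish's original characteristic-function proof, and your first, second, and fourth points are correct. The genuine gap is in the third point. Pointwise truncation of each $X_{j,n}$ at a level $\delta$ does not do what you need: it re-enforces the $\max_j |X_{j,n}|$ control you already have from (M2), but it does nothing to tame $\sum_j X_{j,n}^2$, and it is the \emph{sum} of squares that governs $|T_n|^2 = \prod_j(1+t^2X_{j,n}^2)$. The device that actually works is a stopping-time truncation on the running sum of squares. Set $J_n = \min\{\, j : \sum_{k\le j} X_{k,n}^2 > 2 \,\}$ (with $J_n = k_n$ if the threshold is never crossed) and $Y_{j,n} := X_{j,n}\,\mathbf{1}\{j \le J_n\}$. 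Because $\{j \le J_n\}$ is $\Fc_{j-1,n}$-measurable, the $Y_{j,n}$ still form a martingale-difference array, so the telescoping conditioning gives $\E\big[\prod_j (1+itY_{j,n})\big] = 1$ (and this is legitimate because the product is now integrable). Splitting off the factor at $j=J_n$ yields $\prod_j(1+t^2Y_{j,n}^2) \le \big(1 + t^2\max_j X_{j,n}^2\big)\exp\!\big(t^2 \sum_{j<J_n} X_{j,n}^2\big) \le e^{2t^2}\big(1 + t^2\max_j X_{j,n}^2\big)$, which is $L^1$-bounded uniformly in $n$ precisely by (M1); hence the truncated $T_n$ is $L^2$-bounded and thus uniformly integrable. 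Finally $\P(\exists j:\,Y_{j,n}\ne X_{j,n}) = \P\big(\sum_j X_{j,n}^2 > 2\big) \to 0$ by (M3), so the truncation is asymptotically invisible and the conclusion transfers to the original array. Your mention of ``estimating $\E(X_{j,n}^2 \mid \Fc_{j-1,n})$'' hints at the conditional-variance hypotheses of Brown's martingale CLT, which is a different theorem; McLeish's formulation, the one stated here, works with the unconditional $\sum_j X_{j,n}^2$ and needs the stopping-time trick rather than a Lindeberg-type conditional-variance estimate. Also note that the order of operations matters: one must truncate \emph{first} so that both $\E[T_n]=1$ and uniform integrability hold simultaneously for the truncated product; as written, your second point invokes integrability that is only produced by the truncation in your third point.
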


Given an arbitrary increasing sequence $L_n \to \infty$, we intend to apply this theorem to the array
\begin{align}\label{e_arrayDefinition}
  X_{i, n} &:= \frac{U_i(L_n)}{ \sqrt{\sum_{i = 1}^{k_{n}} \E \big( U_i(L_n) \big)^2} }\, , &\Fc_{i, n} &= \Fc_i(L_n) \, , &k_n &= N(L_n) \, .
\end{align}
Assuming this can be done, we will have proved that
\[
  \frac{\tilde S_{N(L_n), L_n} \phi}{ \sqrt{\sum_{i = 1}^{k_{n}}%
      \E \big( U_i(L_n) \big)^2} }
\]
converges to a standard Gaussian $\Nc(0,1)$. Afterwards,
Theorem~\ref{thm:CLT} easily follows from the asymptotic estimate for
$\sum_i \E \big(U_i(L_n) \big)^2$ given below.
\begin{prop}\label{prop:asympVariance}
Under condition \eqref{eq:N14}, we have for all $L$ sufficiently large that
\[
\sum_{i = 1}^{N(L)} \E \big(U_i(L) \big)^2 = N(L) \int \phi^2 + o(N(L)) \, .
\]
\end{prop}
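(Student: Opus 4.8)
The plan is to reduce the statement to the moment estimates already established for the martingale differences $U_i$, combined with two elementary facts: $F$ preserves $\Leb$, and for an $x$-dependent observable one has $\int_{\T^2}\phi^2\,d\Leb = \int_{\T^1}\phi^2\,dx$. Concretely, I would start from Corollary~\ref{cor:U2}, which gives the decomposition $U_i^2 = \phi^2\circ F^{i-1} + W_i$ with $W_i = O(N^2\|\phi\|_{C^0}^2)$ and $\E|W_i| = O(\|\phi\|_{C^1}^2 N^2 L^{-1/2})$. Taking expectations and summing over $1\le i\le N = N(L)$ yields $\sum_{i=1}^N \E(U_i^2) = \sum_{i=1}^N \E(\phi^2\circ F^{i-1}) + \sum_{i=1}^N \E(W_i)$.

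For the first sum, since $F$ preserves Lebesgue measure and $\phi^2$ is $x$-dependent, each term satisfies $\E(\phi^2\circ F^{i-1}) = \int_{\T^2}\phi^2\,d\Leb = \int_{\T^1}\phi^2\,dx$, so that sum equals exactly $N\int\phi^2$. For the second sum, the triangle inequality together with the $L^1$-bound on $W_i$ gives $\bigl|\sum_{i=1}^N \E(W_i)\bigr| \le \sum_{i=1}^N \E|W_i| = O(N^3 L^{-1/2})$.

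It then remains only to check that $N^3 L^{-1/2} = o(N)$, i.e. that $N^2 L^{-1/2}\to 0$ as $L\to\infty$; but this is precisely $(N L^{-1/4})^2 \to 0$, which is hypothesis~\eqref{eq:N14}. Combining the two displays gives $\sum_{i=1}^{N(L)} \E(U_i(L))^2 = N(L)\int\phi^2 + o(N(L))$, as claimed.

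There is no substantive obstacle in this argument: all the analytic effort has already been expended in Proposition~\ref{prop:asymptUi} and Corollary~\ref{cor:U2} (which themselves rest on the equidistribution estimate Proposition~\ref{prop:docStdPair}). The one point worth flagging is that the scaling constraint $N = o(L^{1/4})$ encoded in~\eqref{eq:N14} is exactly what is needed to absorb the factor $N^2 L^{-1/2}$ coming from the $L^1$-bound on $W_i$; a slower decay rate for $N$ would not suffice, and this is one of the places where the $L^{1/4}$ threshold in Theorem~\ref{thm:CLT} is genuinely used.
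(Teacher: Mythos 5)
Your proof is correct and follows essentially the same route as the paper's: both start from Corollary~\ref{cor:U2}, use invariance of $\Leb$ under $F$ and the $x$-dependence of $\phi^2$ to evaluate $\sum_i \E(\phi^2\circ F^{i-1}) = N\int\phi^2$, then absorb $\sum_i\E|W_i| = O(N^3L^{-1/2})$ into $o(N)$ via condition~\eqref{eq:N14}. Your closing remark about where the $L^{1/4}$ threshold is used is also accurate.
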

\begin{proof}
  Dropping the “$L$” and using Corollary~\ref{cor:U2}, we estimate
  \begin{align*}
  \sum_{i = 1}^N \bE(U_i^2) 
    &= \sum_{i = 0}^{N}\int \phi^2 \circ F^{i-1} + \sum_{i=1}^N \bE(W_i) \\
    &= N \int \phi^2 + O(\| \phi \|_{C^1}^2 N^3 L^{-\frac12}) = N
  \int \phi^2 + o( \| \phi \|_{C^1}^2 N) \,.  \qedhere
  \end{align*}
\end{proof}

 It remains to verify the hypotheses (M1) -- (M3) in Theorem
\ref{thm:mcl} for our choice of $X_{i, n}$.  In the following estimates,
we write $L = L_n$ and otherwise drop the “$L$” from our notation
whenever possible.  Moreover, to improve readability we will drop
$\| \phi \|_{C^1}$ terms from our estimates, absorbing them into the
$O(\cdots), o(\cdots)$ notation.

\subsubsection*{Proof of (M1) and (M2) in Theorem \ref{thm:mcl}}\ \\
In fact we will prove
\begin{align*}
  \int \max_{i \leq k_n} X_{i, n}^2  \to 0 \quad \text{ as }  n \to \infty \, ,
\end{align*}
which implies both (M1) and (M2).
Using~\eqref{e_arrayDefinition}, Proposition~\ref{prop:asympVariance},
Corollary~\ref{cor:U2} we estimate
\begin{align*}
  \int \max_{i\leq k_n} X_{i,n}^2
  &= \int \frac{\max_{i \leq N} U_i^2}{N (\int \phi^2 + o(1))} \le \int
    \frac{\max_{i\le N} \phi^2 \circ F^{i-1} +  \max_{i\le N} |W_i|}{N (\int \phi^2 + o(1))} \\
  & \le \frac{O(1) + \sum_{i = 1}^{N}\bE|W_i|}{N (\int \phi^2 + o(1))}
    = \frac{O(1)+ O(N^3 L^{-\frac12})}{N (\int \phi^2 + o(1))} \, ,
\end{align*}
which, under \eqref{eq:N14}, goes to $0$ as $L = L_n \to \infty$.

\subsubsection*{Proof of (M3)}
We write
\begin{align*}
  \sum_i X_i^2 - 1 &= \frac{\sum_i U_i^2 - \sum_i \bE(U_i^2)}{\sum_i \bE(U_i^2)}\\
                   &=\underbrace{\frac{\sum_i \phi^2 \circ F^{i-1} - N \int \phi^2}{\sum_i \bE(U_i^2)}}_{(I)}  +
                     \underbrace{\frac{\sum_i W_i - \sum_i \bE\textr{(}W_i\textr{)}}{\sum_i \bE(U_i^2)}}_{(II)} \, .
\end{align*}
Observe first that by Proposition~\ref{prop:asympVariance} and
Corollary~\ref{cor:U2}:
\begin{align*}
  \bE\left| \frac{\sum_i W_i - \sum_i \bE\textr{(}W_i\textr{)}}{\sum_i \bE(U_i^2)}
  \right| = \frac{O(N^3 L^{-\frac12})}{N(\int \phi^2 + o(1)) } = O(N^2
  L^{-\frac12}).
\end{align*}
In particular, under~\eqref{eq:N14}, term (II) above converges to $0$ in
$L^1$, hence in probability, as $L = L_n \to \infty$.
On the other hand, we will prove (I) converges to $0$ in $L^2$, hence in
probability.  In order to do this, we write:
\[
\begin{aligned}
&\quad \bE \left( \sum_i \phi^2 \circ F^{i-1} - N \int \phi^2 \right)^2   = \sum_{i, j}\left(  \int (\phi^2 \circ F^{i-1})(\phi^2 \circ F^{j-1}) - \left(\int \phi^2\right)^2  \right) \\
& = N \left( \int \phi^4 - \left(\int \phi^2\right)^2 \right) + 2 \sum_{1 \leq i < j \leq N} \left(  \int (\phi^2 \circ F^{i-1})(\phi^2 \circ F^{j-1}) - \left(\int \phi^2\right)^2  \right) .
\end{aligned}
\]
The first term on the right hand side, which corresponds to the sum
along the diagonal $i = j$, is clearly $O(N)$.  For each off-diagonal
summand $1 \leq i < j \leq N$, we apply Corollary \ref{cor:LebDoC} with
the replacements $\phi, \psi \mapsto \phi^2$ and, using~\eqref{eq:N14},
we gather
\begin{align*}
  \int (\phi^2 \circ F^{i-1})(\phi^2 \circ F^{j-1}) - \left(\int \phi^2\right)^2 %
 & = \int \phi^2 \cdot \phi^2 \circ F^{j-i} - \bigg( \int \phi^2 \bigg)^2 \\
 & = O\left( (j-i) L^{- \frac34} + L^{-\frac12}  \right) \\
 & = O(L^{- 1/2}) \, .
\end{align*}
Therefore, using once again~\eqref{eq:N14}:
\begin{align*}
  \sum_{i < j} \left(  \int (\phi^2 \circ F^{i-1})(\phi^2 \circ F^{j-1}) - \left(\int \phi^2\right)^2  \right)  = O(N^2L^{-\frac12}) = o(N^2) \, .
\end{align*}
As a result, by Proposition~\ref{prop:asympVariance}:
\[
\bE\left( \frac{ \left( \sum_i \phi^2 \circ F^{i-1} - N \int \phi^2 \right)^2}{\left( \sum_i \bE(U_i^2) \right)^2} \right) = \frac{O(N) + o(N^2) }{N^2 (\int \phi^2 + o(1))} \to 0 \, .
\]
Thus, the terms $(I)$ tend to $0$ in $L^2$, hence in probability, as
claimed.  This completes the verification of property (M3), hence the
proof of Theorem~\ref{thm:CLT}.

\section{Diffusive limit for the slow-fast system}

In this section we show how Theorem~\ref{thm:diffusion} follows from
Theorem~\ref{thm:CLT}.  Set $L = \epsilon^{-\alpha}$ and
$N(L) = N(\epsilon(L)) = \floor{\epsilon(L)^{-2}} =
\floor{L^{2/\alpha}}$; since we assume $\alpha > 8$, we have
$N(L) L^{\frac14} \to 0$ as $L \to \infty$, therefore
Theorem~\ref{thm:CLT} applies. Let $X, Y$ be independent uniformly
distributed random variables on $[0, 1]$. Since by construction
$\pi_x G_\epsilon^{i}(x, \epsilon^{1 + \alpha}y) = \pi_x F_L^{i}(x, y)$,
we have, by Theorem~\ref{thm:CLT}  applied to $\phi(x) = \sin(2 \pi x)$ that
\begin{equation}
\label{eq:limit-XY}
\begin{aligned}
  \pi_z G_\epsilon^{N(\epsilon)}(X, \epsilon^{1 + \alpha} Y) -
  \epsilon^{1 + \alpha} Y &= \epsilon\sum_{i = 0}^{N(\epsilon)-1}
  \phi \left( \pi_x G_\epsilon^i(X, \epsilon^{1 + \alpha}Y) \right)\\
  &= \epsilon \sum_{i = 0}^{N(\epsilon)-1} \phi\left( \pi_x F_L^{i}(X, Y) \right) \\
  & = \bigg(\epsilon\sqrt{N(\epsilon)} \bigg) \cdot
  \frac{1}{\sqrt{N(\epsilon)}}
  \sum_{i = 0}^{N(\epsilon)-1} \phi \left(\pi_x F_L^{i}(X, Y) \right)
  \to \Nc(0, \frac12)
\end{aligned}
\end{equation}
in distribution as $\epsilon \to 0$ (note the parenthetical term in the
third line converges to 1 as $L \to \infty$).

Recall that $Z$ is a uniformly
distributed random variable on $[a, b]$. We define
$A(\epsilon) = \epsilon^{1+ \alpha} \lceil \epsilon^{-1-\alpha}a \rceil$ and
$B(\epsilon) = \epsilon^{1 + \alpha}\floor{\epsilon^{- 1- \alpha}b}$,
and let $Z_*(\epsilon)$ be uniformly distributed on the interval
$[A(\epsilon), B(\epsilon)]$.   Notice that for $\epsilon$ sufficiently
small, $a \leq A(\epsilon) < B(\epsilon) \leq b$.

For any $i \in \Z$, the translated random variables
\begin{align*}
  \pi_z G_\epsilon^{N(\epsilon)}(X, \epsilon^{1 + \alpha}(i+ Y)) - \epsilon^{1 + \alpha}(i + Y)
\end{align*}
are all identically distributed.  As a result, the random variables
\begin{align*}
\pi_z G_\epsilon^{N(\epsilon)}(X, \epsilon^{1 + \alpha} Y) - \epsilon^{1 + \alpha} Y \text{ and }
\pi_z G_\epsilon^{N(\epsilon)}(X, Z_*(\epsilon)) - Z_*(\epsilon)
\end{align*}
are identically distributed.    %
Moreover, for any $t \in \R$, we have
\[
\begin{aligned}
  & \P\left( \pi_z G_\epsilon^{N(\epsilon)}(X, Z) - Z < t\right)  \\
  & = \P\left( \pi_z G_\epsilon^{N(\epsilon)}(X, Z) - Z < t | Z \in
    [A(\epsilon), B(\epsilon)]\right) \P(Z \in [A(\epsilon),
  B(\epsilon)]) \\
  &  \quad\quad + O\left( \P(Z \notin [A(\epsilon), B(\epsilon)] ) \right) \\
  & = \P\left( \pi_z G_\epsilon^{N(\epsilon)}(X, Z_*(\epsilon)) -
    Z_*(\epsilon) < t \right)\left( 1 - O(\epsilon^{1 + \alpha}) \right)
  + O(\epsilon^{1 + \alpha}) \, .
\end{aligned}
\]
We conclude that
\[
  \pi_z G_\epsilon^{N(\epsilon)}(X, Z) - Z, \quad \pi_z
  G_\epsilon^{N(\epsilon)}(X, Z_*(\epsilon)) - Z_*(\epsilon), \quad
  \pi_z G_\epsilon^{N(\epsilon)}(X, \epsilon^{1 + \alpha} Y) -
  \epsilon^{1 + \alpha} Y
\]
all have the same distributional limit as $\epsilon \to 0$.
Theorem~\ref{thm:diffusion} then follows from \eqref{eq:limit-XY}.
\bibliographystyle{plain}
\bibliography{clt-sm}
\end{document}